\newtheorem{theorem}{Theorem}
\newtheorem{conjecture}[theorem]{Conjecture}
\newtheorem{proposition}[theorem]{Proposition}
\newtheorem{lemma}[theorem]{Lemma}
\newtheorem{compl}[theorem]{Complement}
\newtheorem{fact}[theorem]{Fact}
\theoremstyle{definition}
\newtheorem{definition}[theorem]{Definition}
\newtheorem{remark}[theorem]{Remark}
\renewcommand{\vec}[1]{\mathbf{#1}}
\renewcommand\subset{\subseteq}
\renewcommand\emptyset{\varnothing}
\renewcommand{\geq}{\geqslant}
\renewcommand{\leq}{\leqslant}
\renewcommand{\bar}{\overline}
\newcommand{\addresseshere}{%
  \enddoc@text\let\enddoc@text\relax
}
\title[Quantum traces]{Quantum traces for $\mathrm{SL}_n(\mathbb{C})$: The case $n=3$}
\author[D. C. Douglas]{Daniel C. Douglas}
\address{Department of Mathematics, Virginia Tech, 225 Stanger Street, Blacksburg, VA 24061}
\email{dcdouglas@vt.edu}
\date{\today}
\thanks{This work was partially supported by the U.S. National Science Foundation grants DMS-1406559 and DMS-1711297}
\begin{document}
\begin{abstract}
We generalize Bonahon--Wong's $\mathrm{SL}_2(\mathbb{C})$-quantum trace map to the setting of $\mathrm{SL}_3(\mathbb{C})$.  More precisely, given a non-zero complex parameter $q=e^{2 \pi i \hbar}$, we associate to each isotopy class of framed oriented links $K$ in a thickened punctured surface $\mathfrak{S} \times (0, 1)$ a Laurent polynomial $\mathrm{Tr}_\lambda^q(K) = \mathrm{Tr}_\lambda^q(K)(X_i^q)$ in $q$-deformations $X_i^q$ of the Fock--Goncharov $\mathcal{X}$-coordinates for higher Teichm\"{u}ller space.  This construction depends on a choice $\lambda$ of ideal triangulation of the surface $\mathfrak{S}$.  Along the way, we propose a definition for a $\mathrm{SL}_n(\mathbb{C})$-version of this invariant.  
\end{abstract}
\maketitle

\section{Introduction}
\label{sec:introduction}

For a finitely generated group $\Gamma$ and a suitable Lie group $G$, a primary object of study in low-dimensional geometry and topology is the $G$-character variety
\begin{equation*}
	\mathscr{R}_G(\Gamma) = \left\{  \rho : \Gamma \to G  \right\} /\!\!/  
	G,
\end{equation*}
consisting of group homomorphisms $\rho$ from $\Gamma$ to $G$, considered up to conjugation.  Here, the quotient is taken in the algebraic geometric sense of geometric invariant theory \cite{Mumford94}.  Character varieties can be explored using a wide variety of mathematical skill sets.  Some examples include the Higgs bundle approach of Hitchin \cite{HitchinTopology92}, the dynamics approach of Labourie \cite{LabourieInvent06}, and the representation theory approach of Fock--Goncharov \cite{FockIHES06}.

In the case where the group $\Gamma = \pi_1(\mathfrak{S})$ is the fundamental group of a punctured surface $\mathfrak{S}$ of finite topological type with negative Euler characteristic, and where the Lie group $G = \mathrm{SL}_n(\mathbb{C})$ is the special linear group, we are interested in studying a relationship between two competing deformation quantizations of the character variety $\mathscr{R}_{\mathrm{SL}_n(\mathbb{C})}(\pi_1(\mathfrak{S}))$, which we denote simply by $\mathscr{R}_{\mathrm{SL}_n(\mathbb{C})}(\mathfrak{S})$.  Here, a deformation quantization $\{ \mathscr{R}^q \}_q$ of a Poisson space $\mathscr{R}$ is a family of non-commutative algebras $\mathscr{R}^q$ parametrized by a non-zero complex parameter $q = e^{2 \pi i \hbar}$, such that the lack of commutativity in $\mathscr{R}^q$ is infinitesimally measured in the semi-classical limit $\hbar \to 0$ by the Poisson bracket of the space $\mathscr{R}$.  In the case where  $\mathscr{R} = \mathscr{R}_{\mathrm{SL}_n(\mathbb{C})}(\mathfrak{S})$ is the character variety, the bracket is provided by the Goldman Poisson structure on  $\mathscr{R}_{\mathrm{SL}_n(\mathbb{C})}(\mathfrak{S})$ \cite{GoldmanAdvMath84, GoldmanInvent86, BiswasInternatjmath93}.  

The first quantization of the character variety is the $\mathrm{SL}_n(\mathbb{C})$-skein algebra $\mathscr{S}^q_n(\mathfrak{S})$ of the surface $\mathfrak{S}$; see \cite{Turaev89, WittenCommMathPhys89, PrzytyckiBullPolishAcad91, BullockKnotTheory99, KuperbergCommMathPhys96, SikoraAlgGeomTop05, CautisMathAnn14}.  The skein algebra is motivated by the classical algebraic geometric approach to studying the character variety $\mathscr{R}_{\mathrm{SL}_n(\mathbb{C})}(\mathfrak{S})$ by means of its commutative algebra of regular functions $\mathbb{C}[\mathscr{R}_{\mathrm{SL}_n(\mathbb{C})}(\mathfrak{S})]$.  An example of a regular function is the trace function $\mathrm{Tr}_\gamma : \mathscr{R}_{\mathrm{SL}_n(\mathbb{C})}(\mathfrak{S}) \to \mathbb{C}$ associated to a closed curve $\gamma \in \pi_1(\mathfrak{S})$ sending a representation $\rho : \pi_1(\mathfrak{S}) \to \mathrm{SL}_n(\mathbb{C})$ to the trace $\mathrm{Tr}(\rho(\gamma)) \in \mathbb{C}$ of the matrix $\rho(\gamma) \in \mathrm{SL}_n(\mathbb{C})$.  A theorem of classical invariant theory, due to Procesi \cite{ProcesiAdvMath76}, implies that the trace functions $\mathrm{Tr}_\gamma$ generate the algebra of functions $\mathbb{C}[\mathscr{R}_{\mathrm{SL}_n(\mathbb{C})}(\mathfrak{S})]$ as an algebra.  According to the philosophy of Turaev and Witten, quantizations of the character variety should be of a 3-dimensional nature.  Indeed, elements of the skein algebra $\mathscr{S}^q_n(\mathfrak{S})$ are represented by (formal linear combinations of) knots (or links) $K$ in the thickened surface $\mathfrak{S} \times (0, 1)$.  The skein algebra $\mathscr{S}^q_n(\mathfrak{S})$ has the advantage of being natural, but is difficult to work with in practice.  

The second quantization of the $\mathrm{SL}_n(\mathbb{C})$-character variety is the Fock--Goncharov quantum space $\mathscr{T}_n^q(\mathfrak{S})$; see \cite{Fock99TheorMathPhys, Kashaev98LettMathPhys, FockENS09}.  At the classical level, Fock--Goncharov \cite{FockIHES06}     introduced a framed version $\mathscr{R}_{\mathrm{PSL}_n(\mathbb{C})}(\mathfrak{S})_{\mathrm{fr}}$ (also called the $\mathcal{X}$-moduli space) of the $\mathrm{PSL}_n(\mathbb{C})$-character variety, which, roughly speaking, consists of representations $\rho : \pi_1(\mathfrak{S}) \to \mathrm{PSL}_n(\mathbb{C})$ equipped  with additional linear algebraic data  attached to the punctures of $\mathfrak{S}$.  Associated to each ideal triangulation $\lambda$ of the punctured surface $\mathfrak{S}$ is a $\lambda$-coordinate chart $U_\lambda$ for $\mathscr{R}_{\mathrm{PSL}_n(\mathbb{C})}(\mathfrak{S})_{\mathrm{fr}}$ parametrized by $N$ non-zero complex coordinates $X_1, X_2, \dots, X_N$ where the integer $N$ depends only on the topology of the surface $\mathfrak{S}$ and the rank of the Lie group $\mathrm{SL}_n(\mathbb{C})$.  When written in terms of these coordinates $X_i$ the trace functions $\mathrm{Tr}_\gamma$ on the character variety take the form of Laurent polynomials $\widetilde{\mathrm{Tr}}_\gamma(X_i^{1/n})$ in $n$-roots of the $X_i$ (a subtlety being that $\widetilde{\mathrm{Tr}}_\gamma(X_i^{1/n})$ depends on the regular homotopy class of $\gamma$, represented by immersed curves, rather than the homotopy class of $\gamma$).  At the quantum level, there are $q$-deformed versions $X_i^q$ of these coordinates, which no longer commute but $q$-commute with each other, according to the underlying Poisson structure.  The quantized character variety $\mathscr{T}_n^q(\mathfrak{S})$ is obtained by gluing together quantum tori $\mathscr{T}_n^q(\sigma)$, including one for each triangulation $\sigma=\lambda$ consisting of Laurent polynomials in the quantized Fock--Goncharov coordinates  $X_i^q$.  The quantum character variety $\mathscr{T}_n^q(\mathfrak{S})$ has the advantage of being easier to work with than the skein algebra $\mathscr{S}^q_n(\mathfrak{S})$, but is less intrinsic.  

We seek $q$-deformed versions $\mathrm{Tr}_\gamma^q$ of the trace functions, associating to a closed curve $\gamma$ a Laurent polynomial in the quantized Fock--Goncharov coordinates with respect to a fixed triangulation $\lambda$. Turaev and Witten's philosophy leads us from the 2-dimensional setting of curves $\gamma$ on the surface $\mathfrak{S}$ to the $3$-dimensional setting of knots $K$ in the thickened surface $\mathfrak{S} \times (0, 1)$.  More precisely:
\begin{conjecture}[$\mathrm{SL}_n(\mathbb{C})$-quantum trace map] 
\label{conj:quantum-trace}
Fix a $(2*n^2)$-root $\omega^{1/2} = q^{1/(2*n^2)} \in \mathbb{C} - \{ 0 \}$. For each ideal triangulation $\lambda$ of the punctured  surface $\mathfrak{S}$ (with empty boundary, $\partial \mathfrak{S}=\emptyset$), there exists an injective algebra homomorphism
\begin{equation*}
	\mathrm{Tr}^\omega_\lambda : \mathscr{S}^q_n(\mathfrak{S}) \hookrightarrow \mathscr{T}_n^\omega(\lambda),
\end{equation*}
such that if $\omega^{1/2}=1$, then for every blackboard-framed oriented knot $K$ in the thickened surface $\mathfrak{S} \times (0, 1)$ projecting to an immersed closed curve $\gamma$ on the surface $\mathfrak{S}$,
\begin{equation*}
	\mathrm{Tr}_\lambda^1(K) 
	=  \widetilde{\mathrm{Tr}}_\gamma(X_i^{1/n}).  
\end{equation*} 
\end{conjecture}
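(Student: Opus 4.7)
The plan is to adapt Bonahon--Wong's strategy from the $n=2$ case, in which the global map is assembled from local pieces living on triangles and biangles of a refinement of the triangulation. The first step is to enhance $\idealtriang$ to a split ideal triangulation $\splitidealtriang$ by inserting a thin biangle along each edge, and then to isotope the input knot $K$ (or, more generally, an $\mathrm{SL}_n$-web; for $n=3$, a Kuperberg $\mathfrak{sl}_3$-spider) into \emph{split position} with respect to $\splitidealtriang$, meaning that all crossings lie inside biangles and all trivalent vertices lie inside triangles, while the intersection with each face is a disjoint union of simple arcs.

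Next, I would define the quantum trace locally on each piece. On a biangle, assign to each crossing the quantum $R$-matrix of $U_q(\mathfrak{sl}_n)$ and take a state sum over labels in $\{1,\dots,n\}$ assigned to the boundary endpoints; this produces a scalar for each pair of boundary states. On a triangle, define a similar state sum where, for each compatible labeling of the six boundary endpoints, the contribution is a monomial in the quantum Fock--Goncharov coordinates $X_i^\omega$ of that triangle, with a prescribed Weyl-ordering so as to handle the $q$-commutation. The global quantum trace $\mathrm{Tr}^\omega_\idealtriang(K)$ is then obtained by gluing: sum over all states on interior edges compatible on both sides, and multiply the biangle and triangle scalars.

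Then one must verify that this construction is a well-defined algebra homomorphism. The checks are: (i) invariance under isotopy of $K$ in split position, which reduces to sliding arcs across edges and verifying a \emph{splitting homomorphism} compatibility between $\splitidealtriang$ and $\idealtriang$; (ii) invariance under the $\mathrm{SL}_n$-skein relations --- Reidemeister-type moves for crossings in biangles, the Kuperberg web relations (for $n=3$: bigon, square, and I=H) when vertices or edges cross between a biangle and a triangle, and framing; (iii) the algebra homomorphism property, which amounts to showing that stacking two links in $\surf\times(0,1)$ corresponds to the product in $\mathscr{T}_n^\omega(\idealtriang)$ under the chosen Weyl ordering; (iv) the classical limit $q=\omega=1$ reproducing Fock--Goncharov's trace polynomial $\widetilde{\mathrm{Tr}}_\gamma(X_i^{1/n})$. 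Injectivity should follow from the observation that the state sum retains enough combinatorial information about the web, e.g., by evaluating on simple multicurves whose leading monomials in the $X_i^\omega$ can be read off directly.

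The hard part will be the triangle piece. Unlike the $n=2$ setting, one must accommodate genuine trivalent vertices, and the Fock--Goncharov chart of a triangle carries many more coordinates (ten per triangle for $\mathrm{SL}_3$, situated at vertices, edges, and the barycenter), yielding state sums whose monomials are more intricate and whose exponents a priori take values in $\tfrac{1}{n}\Z$. Finding the correct normalizing factors so that (a) all Kuperberg relations hold --- in particular the square relation, which encodes the bulk of the delicate $q$-identities --- and (b) the resulting element actually lies in $\mathscr{T}_n^\omega(\idealtriang)$ rather than in an $n$-th root extension, will be the technical core of the argument. Once this combinatorial calibration on a single triangle is pinned down and the biangle piece is handled via the standard quantum $R$-matrix calculus, the splitting-homomorphism formalism organizes the rest of the verifications into essentially local checks.
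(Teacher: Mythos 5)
The statement you are addressing is presented in the paper as a \emph{conjecture}, and your proposal is an outline of the standard strategy rather than a proof: it reproduces, essentially step for step, the architecture the paper itself uses (split ideal triangulation $\splitidealtriang$, $R$-matrix state sums on biangles, Fock--Goncharov quantum matrices with Weyl ordering on triangles, gluing by summing over compatible edge states), extended from links to webs as in Kim and L\^e--Yu. That is the right skeleton, but every point at which you write ``one must verify'' or ``will be the technical core'' is exactly where the mathematical content lives, and none of it is supplied. Concretely: (i) well-definedness of the triangle contribution amounts to the quantum left and right matrices being $\mathscr{T}_n^\omega(\triang)$-points of $\mathrm{SL}_n^q$ (Theorem~\ref{thm:first-theorem}), which is proved only in a companion paper via quantum snakes; (ii) isotopy invariance reduces to the good-position moves (I)--(V), which for $n=3$ the paper verifies by explicit, computer-assisted matrix identities and which for general $n$ it only \emph{expects} to follow from work of Chekhov--Shapiro; (iii) the algebra homomorphism property on all of $\mathscr{S}^q_n(\surf)$ requires compatibility with the Kuperberg web relations, which this paper does not address at all and which was established for $n=3$ only by Kim; (iv) injectivity is not the soft ``leading monomial'' observation you suggest --- for $n=3$ it required the basis and highest-term results of \cite{DouglasArxiv20, DouglasArxiv20b} together with Kim's naturality under change of triangulation, and for general $n$ it remains open.

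So the verdict is: your approach coincides with the one the paper (and its sequels) take, but as written the proposal contains no proof beyond the framework. In particular it cannot be accepted as a proof of a statement that the paper itself leaves as a conjecture for general $n$ and establishes only in a weakened, link-level form for $n=3$.
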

This last equation says that the Fock--Goncharov classical trace polynomial associated to the curve $\gamma$ is recovered in the classical limit.  Moreover, the $\mathrm{SL}_n(\mathbb{C})$-quantum trace map should be natural, appropriately interpreted, with respect to the choice of triangulation $\lambda$; see \cite{KimArxiv21}.  

Conjecture \ref{conj:quantum-trace} is due to Chekhov--Fock \cite{FockArxiv97, ChekhovCzechJPhys00} in the case $n=2$, and was proved `by hand' in that case by Bonahon--Wong \cite{BonahonGT11}.  One of the motivations of the present work is to develop a  conceptual understanding of their construction.  

We prove the following, slightly weaker, version of Conjecture \ref{conj:quantum-trace} in the case $n=3$:
\begin{theorem}[Theorem \ref{thm:second-theorem}, $\mathrm{SL}_3(\mathbb{C})$-quantum trace polynomials]
\label{thm:intro-theorem-1}
	Fix a $(2*3^2)$-root $\omega^{1/2} = q^{1/(2*3^2)}=q^{1/18} \in \mathbb{C} - \{ 0 \}$.  For each ideal triangulation $\lambda$ of the punctured  surface $\mathfrak{S}$  (with possibly non-empty boundary), there exists a function 
\begin{equation*}
	\mathrm{Tr}_\lambda^\omega :
	\left\{  \textnormal{isotopy classes of (stated) framed oriented links } K \textnormal{ in } \mathfrak{S} \times (0, 1)  
	\right\}
	\to 
	\mathscr{T}_3^\omega(\lambda)
\end{equation*}
such that if $\omega^{1/2}=1$, then for every blackboard-framed oriented link $K$ whose components $K_1, K_2, \dots, K_\ell$ project to immersed closed oriented curves $\gamma_1, \gamma_2, \dots, \gamma_\ell$ in $\mathfrak{S}$,
\begin{equation*}
	\mathrm{Tr}_\lambda^1(K) 
	= \prod_{j=1}^\ell \widetilde{\mathrm{Tr}}_{\gamma_j}(X_i^{1/3}).
\end{equation*}
Moreover, this invariant satisfies the $q$-evaluated \textnormal{HOMFLYPT} relation \cite{FreydBullAmerMathSoc85, Przytycki87ProcAmercMathSoc}, as well as the unknot and framing skein relations, for $n=3$; see Figures {\upshape\ref{fig:HOMFLYPT-relation}}, {\upshape\ref{fig:unknot-relation}}, {\upshape\ref{fig:framing-skein-relations}}.  In the figures, $[n]_q = (q^n - q^{-n})/(q-q^{-1})$ is the quantum integer, and $\bar{\zeta}_n = (-1)^{n-1} q^{(1-n^2)/n}$ is (essentially) the (co)ribbon element of the quantum special linear group $\mathrm{SL}_n^q$ (see Appendix {\upshape\ref{sec:proof-of-reshetikhin-turaev-invariant}}).  \qed
\end{theorem}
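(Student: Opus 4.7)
My plan is to follow the state-sum philosophy used by Bonahon-Wong in the $n=2$ case, suitably upgraded to handle the richer representation theory of $U_q(\mathfrak{sl}_3)$. The first step is to put the framed oriented link $K$ into generic position with respect to the ideal triangulation $\idealtriang$: after isotopy, every crossing and every local extremum of the height function projects into the interior of some face, and every strand intersects each edge of $\idealtriang$ transversely. I would then ``split'' $K$ along the edges of $\idealtriang$, so the data becomes a family of tangles, one per ideal triangle, carrying ordered oriented endpoints on the boundary edges. To further localize the crossings, I would insert a thin biangle sliver along each edge, reducing the pieces to two basic types: elementary tangles inside a biangle $\biang$ containing a single crossing or cap/cup, and crossingless vertical tangles inside an ideal triangle $\triang$.

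Next, I would build a local quantum trace for each elementary piece. For a biangle, the local invariant would be the Reshetikhin-Turaev intertwiner associated to the standard representation $V=\mathbb{C}^3$ of $U_q(\mathfrak{sl}_3)$: crossings are assigned the braiding $R$, cups and caps the quantum (co)evaluation maps, and the blackboard framing contributes powers of the ribbon element $\bar{\zeta}_3$. Each biangle piece is thus an element of a space of intertwiners between tensor products of $V$ and $V^*$. For a triangle with a vertical tangle, I would define a triangle state sum indexed by compatible colorings of the strand endpoints by elements of a state set modeled on the Fock-Goncharov snake parametrization of projective frames, with coefficients in the quantum torus associated to $\triang$ generated by the $X_i^\omega$ attached to that face. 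The triangle state sum would be designed so that, in the classical limit $q=1$, the sum of coefficients over admissible colorings reproduces the appropriate product of $3\times 3$ flag-minor matrices appearing in Fock-Goncharov's explicit formula for $\widetilde{\mathrm{Tr}}_\gamma(X_i^{1/3})$.

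Having set up the local pieces, the quantum trace $\mathrm{Tr}_\idealtriang^\omega(K)$ would be obtained by tensoring together the biangle intertwiners, composing with the product of triangle states summed over all globally consistent edge colorings, and identifying the result with an element of $\mathscr{T}_3^\omega(\idealtriang)$ using the Fock-Goncharov quantum coordinate relations. The hard part will be checking that this assignment is independent of the auxiliary choices, namely the positions of the crossings and extrema relative to the triangulation, the generic-position isotopy, and the biangle decomposition. I expect these invariance checks to reduce to a short list of local moves: sliding a crossing or extremum across a triangulation edge, exchanging two biangles along the same edge, and the Reidemeister II/III moves inside a biangle. Each such move translates into an identity for the $U_q(\mathfrak{sl}_3)$ $R$-matrix and ribbon structure, and the same identities are what force the HOMFLYPT, unknot, and framing skein relations at $n=3$. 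Once invariance is established, the classical limit $q=\omega=1$ follows by direct comparison of the state sum with Fock-Goncharov's snake formula, and a naturality check with respect to the Poisson-compatible change-of-coordinates ensures the output lies in the quantum torus as claimed rather than in a larger fraction field.
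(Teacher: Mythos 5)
Your overall architecture --- split the triangulation so that each edge becomes a biangle, isotope the link so that all crossings and extrema live over the biangles and only flat arcs cross the triangles, assign Reshetikhin--Turaev data to the biangle pieces and Fock--Goncharov data to the triangle pieces, and glue by a state sum over compatible edge states --- is exactly the route the paper takes. The gap is in the triangle contribution, which is the genuinely hard part and which you leave as something to be ``designed so that'' the classical limit comes out right. The classical limit does not determine the quantization: one must specify an ordering of the non-commuting Fock--Goncharov generators in each entry of the snake-matrix product, and one must include the \emph{half-edge} contributions (each edge coordinate is split between the two adjacent triangles) before ordering. The paper's answer is the Weyl quantum ordering (\S\ref{sssec:weyl-ordering}) applied to the full product $\vec{M}^{\mathrm{edge}}\vec{M}^{\mathrm{left/right}}\vec{M}^{\mathrm{edge}}$, normalized by a fractional power of the quantum determinant; without that normalization the matrices fail even the $\mathrm{M}_3^q$ relations (Remark \ref{rem:choice-of-square-root}), so a na\"{\i}ve ``any ordering matching $q=1$'' choice would break invariance.

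Relatedly, your claim that the invariance checks ``translate into an identity for the $U_q(\mathfrak{sl}_3)$ $R$-matrix and ribbon structure'' is only true for the moves internal to a biangle, which Reshetikhin--Turaev handles for free. The decisive moves are those mixing a biangle with a triangle: Move (II) asserts an identity of the form $\vec{R}^\omega_2\,(\vec{U}^q)^{\mathrm{T}}\,\vec{R}^\omega_1=\vec{L}^\omega_3$ inside the non-commutative quantum torus $\mathscr{T}_3^\omega(\triang)$, and Move (IV) is essentially the statement that $\vec{L}^\omega,\vec{R}^\omega$ satisfy the RTT relations, i.e.\ are $\mathscr{T}_3^\omega(\triang)$-points of $\mathrm{SL}_3^q$ (Theorem \ref{thm:first-theorem}). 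These are identities among Weyl-ordered Laurent monomials in $q$-commuting variables, not identities in the braided category of $U_q(\mathfrak{sl}_3)$-modules, and they are where the actual work (done by explicit, computer-assisted computation in \S\ref{sec:computer-check-of-local-moves}) lies. Two further points you omit that are needed for well-definedness: the height-ordering convention (``ordered lower to higher, multiply left to right'') for several arcs over the same triangle, without which the product of triangle factors is ambiguous; and the precise normalization of the U-turn/duality maps (the symmetrized duality parameter $\lambda=\bar{\sigma}_3 q$ of Lemma \ref{fact:matrices-for-dualities}), which is forced by Move (II) and cannot be read off from the ribbon category alone. Finally, the containment of the output in $\mathscr{T}_3^\omega(\idealtriang)$ is a bookkeeping fact about edge generators occurring in corresponding pairs, not a naturality-under-coordinate-change statement.
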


\begin{figure}[H]
	\centering
	\includegraphics[width=.7\textwidth]{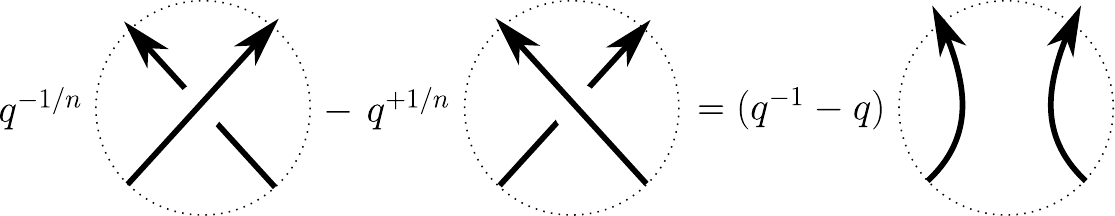}
	\caption{HOMFLYPT skein relation.}
	\label{fig:HOMFLYPT-relation}
\end{figure}

\begin{figure}[H]
	\centering
	\includegraphics[width=.39\textwidth]{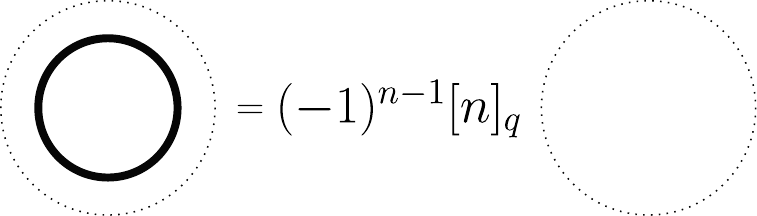}
	\caption{Unknot skein relation.}
	\label{fig:unknot-relation}
\end{figure}

\begin{figure}[H]
     \centering
\begin{subfigure}{0.49\textwidth}
	\centering
	\includegraphics[width=\textwidth]{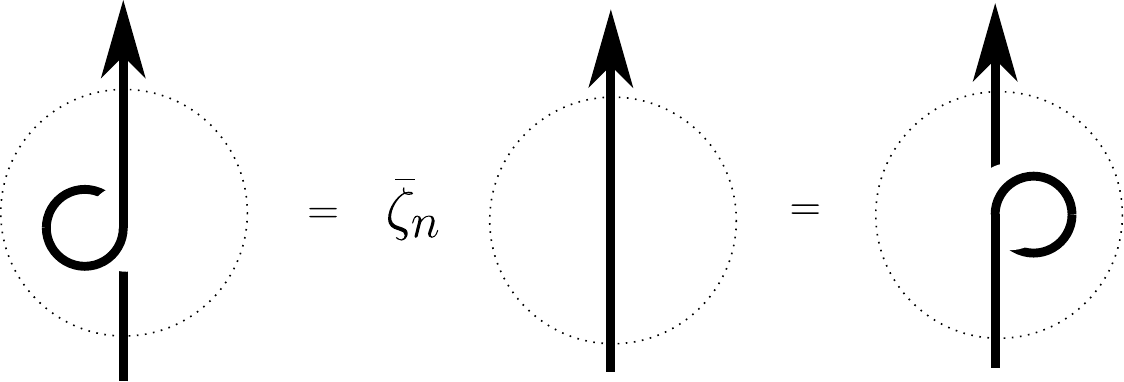}
	\caption{Positive kinks}
	\label{fig:positive-kink-skein-relation}
\end{subfigure} 
\hfill
\begin{subfigure}{0.49\textwidth}
	\centering
	\includegraphics[width=\textwidth]{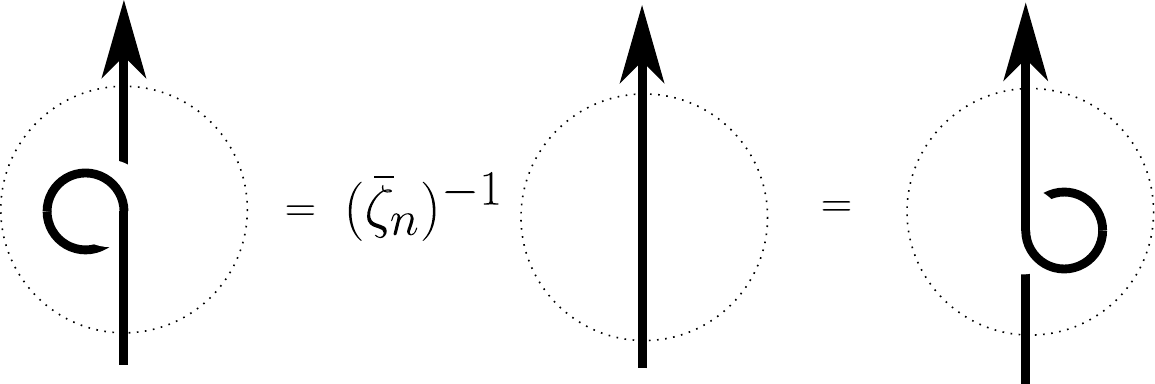}
	\caption{Negative kinks}
	\label{fig:negative-kink-skein-relation}
\end{subfigure}
        \caption{Framing skein relations.}
        \label{fig:framing-skein-relations}
\end{figure}  

In particular, the isotopy invariance property of Theorem \ref{thm:intro-theorem-1} can be thought of as the main step toward proving Conjecture \ref{conj:quantum-trace} in the case $n=3$.  

Theorem \ref{thm:intro-theorem-1} was originally proved as part of \cite{DouglasThesis20}.  Our proof is also `by hand', generalizing the strategy of \cite{BonahonGT11}, and relies on computer assistance for some of the  calculations; see Appendix \ref{sec:the-appendix}.  

Along the way, we propose a definition for a $\mathrm{SL}_n(\mathbb{C})$-version of the quantum trace polynomials; see \S \ref{sec:def-of-quantum-trace}.  That this construction is well-defined for general $n$ is expected to follow from recent related work \cite{chekhovMR4597214}, which shares some overlap with \cite{DouglasA}; see Remark \ref{rem:chekhov-shapiro}.  

The solution of \cite{BonahonGT11} in the $n=2$ case is implicitly related to the theory of the quantum group $\mathrm{U}_q(\mathfrak{sl}_2)$ or, more precisely, of its Hopf dual $\mathrm{SL}_2^q$; see for instance \cite{Kassel95}.  For general  $n$, we make this relationship more explicit; see \S \ref{sec:quantum-SLn} and Appendix \ref{sec:proof-of-reshetikhin-turaev-invariant}.  

In addition to the HOMFLYPT relation, the skein algebra $\mathscr{S}^q_n(\mathfrak{S})$ has other relations, best expressed as identities among certain $n$-valent graphs $W$ in $\mathfrak{S} \times (0, 1)$ called webs \cite{KuperbergCommMathPhys96, SikoraTrans01, CautisMathAnn14}.  It is therefore desirable to extend the definition of the quantum trace polynomials from links $K$ to webs $W$.  Building on our construction for links, Kim \cite{KimArxiv20, KimArxiv21} defined a $\mathrm{SL}_3(\mathbb{C})$-quantum trace map for webs, which is natural with respect to the choice of ideal triangulation $\lambda$.  Combined with Kim's work, the results of \cite{DS24, DouglasArxiv20b} lead to a proof of the injectivity property of Conjecture \ref{conj:quantum-trace} in the case $n=3$, which is closely related to the study of linear bases of skein algebras; see \cite[\S 9.3]{DS24}.  

As another application, Kim \cite{KimArxiv20} constructed a $\mathrm{SL}_3(\mathbb{C})$-quantum Fock--Goncharov duality map \cite{FockENS09} (of the bangle, rather than bracelet, form \cite{ThurstonPNAS14}), generalizing much of the $n=2$ solution of \cite{AllegrettiAdvMath17}; see also \cite{AllegrettiSelectaMath19, ChoCommMathPhys20}.  For other related studies in the $\mathrm{SL}_3(\mathbb{C})$-setting, see \cite{higginsMR4609753, Frohman22MathZ, ishibashiMR4552137}.

L\^{e}--Yu \cite{LeArxiv23} constructed a $\mathrm{SL}_n(\mathbb{C})$-quantum trace map for webs, agreeing at the level of links with the definition proposed in this paper.  Their construction fits into a theory of $\mathrm{SL}_n(\mathbb{C})$-stated skein algebras \cite{LeQuantumTopology18, costantinoMR4493620, LeArxiv21}.  

Quantum traces also appear in the 
context of spectral networks \cite{GabellaCommMathPhys17, NeitzkeJHighEnerPhys20}.  Empirical computations performed together with A. Neitzke (see \cite{neitzkeMR4482975}) suggest that, at least for simple curves, the $\mathrm{SL}_3(\mathbb{C})$-quantum trace map defined in this paper agrees with that constructed in \cite{neitzkeMR4482975}; see also \cite{kimMR4568006} in the case $n=2$.  

The quantum trace map is a tool for studying the representation theory of skein algebras \cite{BonahonInvent16, FrohmanInvent19}, relevant to topological quantum field theories \cite{WittenCommMathPhys89, BlanchetTopology95}. 

\section*{Acknowledgements}

This work would not have been possible without the guidance of my Ph.D. advisor Francis Bonahon.  Many thanks go out to Sasha Shapiro and Thang L\^e for informing me about related research and for enjoyable conversations, as well as to Dylan Allegretti and Zhe Sun who helped me fine-tune my ideas.  I am also grateful to Andy Neitzke for sharing his enthusiasm for experiment, as well as to the referee for their helpful comments.  Last but not least, Person D would like to thank Person G for their limitless support (and artistic inspiration).  

\section{Classical trace polynomials for \texorpdfstring{$\mathrm{SL}_n$}{SLn}}
\label{sec:classical-fock-goncharov-coordinates}

In this section, we will associate a Laurent polynomial $\widetilde{\mathrm{Tr}}_\gamma(X_i^{1/n})$ in commuting formal $n$-roots $X_1^{\pm 1/n}$, $X_2^{\pm 1/n}$, $\dots$,  $X_N^{\pm 1/n}$ to each immersed oriented closed curve $\gamma$ transverse to a fixed ideal triangulation $\lambda$ of a punctured surface $\mathfrak{S}$, where $N$ depends only on the topology of $\mathfrak{S}$ and the rank of the Lie group $\mathrm{SL}_n(\mathbb{C})$.  

\subsection{Topological setup}
\label{ssec:topological-setting}

Let $\mathfrak{S}$ be an oriented \textit{punctured surface} of finite topological type, namely $\mathfrak{S}$ is obtained by removing a finite subset $P$, called the set of \textit{punctures}, from a compact oriented surface $\overline{\mathfrak{S}}$.   In particular, note that $\mathfrak{S}$ may have non-empty boundary, $\partial \mathfrak{S} \neq \emptyset$.  We require that there is at least one puncture, that each component of $\partial \overline{\mathfrak{S}}$ is punctured (that is, intersects $P$), and that the Euler characteristic $\chi(\mathfrak{S})$ of the punctured surface $\mathfrak{S}$ satisfies $\chi(\mathfrak{S}) < d/2$ where $d$ is the number of components of $\partial \mathfrak{S}$.  Note  that each component of $\partial \mathfrak{S}$ is an \textit{ideal arc}.  These topological conditions guarantee the existence of an \textit{ideal triangulation} $\lambda$ of the punctured  surface $\mathfrak{S}$, namely a triangulation $\overline{\lambda}$ of the  surface $\overline{\mathfrak{S}}$ whose vertex set is equal to the set of punctures $P$.  See Figure \ref{fig:example-ideal-triangulations} for some examples of ideal triangulations.  The ideal triangulation $\lambda$ consists of $\epsilon = -3 \chi(\mathfrak{S}) + 2d$ edges $E$ and $\tau =-2 \chi(\mathfrak{S}) + d$ triangles $\mathfrak{T}$.  

For simplicity, we always assume that $\lambda$ does not contain any \textit{self-folded triangles}.  Consequently, each triangle $\mathfrak{T}$ of $\lambda$ has three distinct edges.  Such an ideal triangulation $\lambda$ always exists  (except when $\mathfrak{S}$ is a disk with one internal puncture and one puncture on the boundary).  Our results should generalize to allow for self-folded triangles, requiring only minor adjustments (one would need to modify Definition \ref{def:FG-algebra-of-the-whole-surface}--compare \cite[\S 2.1]{BonahonGT11}, which makes use of the Weyl quantum ordering (\S \ref{sssec:weyl-ordering})--but the main definition, Definition \ref{def:main-definition-qtracemap}, is un-changed).

\begin{figure}[htb]
     \centering
     \begin{subfigure}{0.45\textwidth}
         \centering
         \includegraphics[width=.4\textwidth]{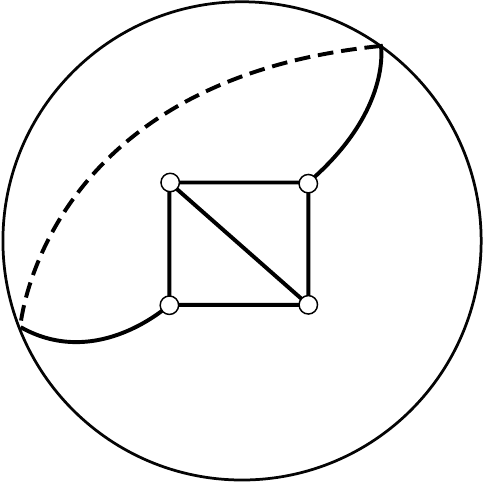}
         \caption{Four times punctured sphere}
         \label{fig:four-times-punctured-sphere-triang}
     \end{subfigure}     
\hfill
     \begin{subfigure}{0.45\textwidth}
         \centering
         \includegraphics[width=.75\textwidth]{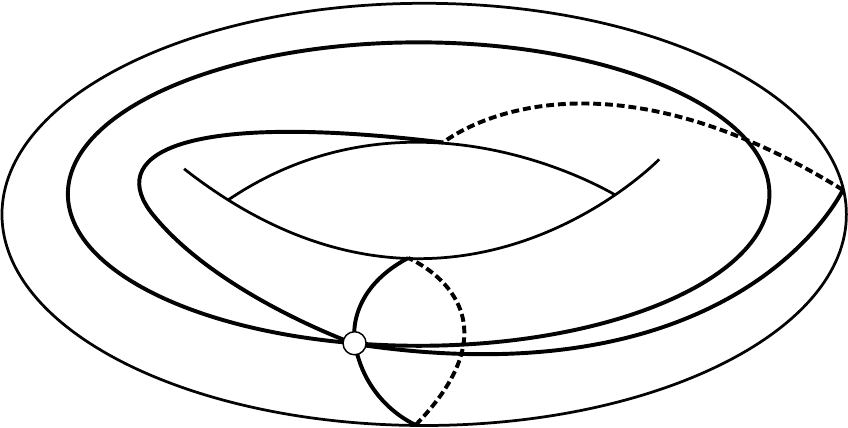}
         \caption{Once punctured torus}
         \label{fig:once-punctured-torus}
     \end{subfigure}
        \caption{Ideal triangulations ($\partial \mathfrak{S} = \emptyset$).}
        \label{fig:example-ideal-triangulations}
\end{figure}  

\subsection{Discrete triangle}
\label{subsec:the-discrete-triangle}

The \textit{discrete $n$-triangle} $\Theta_n$ is
\begin{equation*}
	\Theta_n \overset{\text{def}}{=} \left\{ (a, b, c) \in \mathbb{Z}^3 ;  a, b, c \geq 0,   a+b+c=n \right\},
\end{equation*}
as shown in Figure \ref{fig:n-discrete-triangle}.   The \textit{interior} of the $n$-discrete triangle is
\begin{equation*}
	\mathrm{int}(\Theta_n) \overset{\text{def}}{=} \left\{ (a, b, c) \in \mathbb{Z}^3 ;  a, b, c > 0,  a+b+c=n \right\}.
\end{equation*}

\begin{figure}[htb]
\centering
\includegraphics[scale=.55]{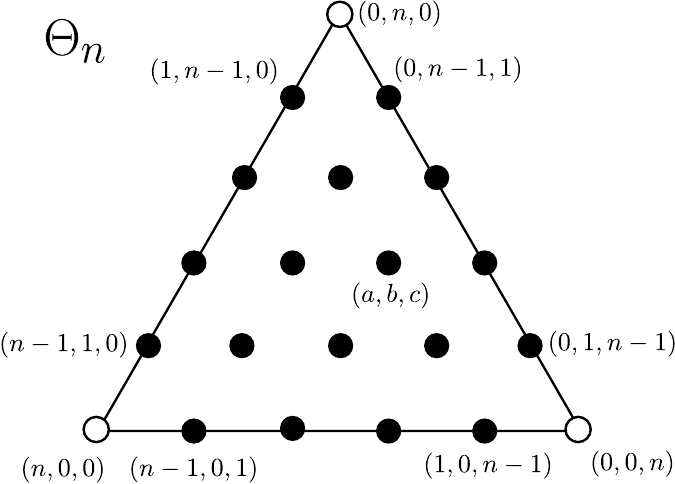}
\caption{Discrete triangle ($n=5$).}
\label{fig:n-discrete-triangle}    
\end{figure}

\subsection{Dotted ideal triangulations}
\label{ssec:dotted-ideal-triangulations}

Let the punctured  surface $\mathfrak{S}$ be equipped with an ideal triangulation $\lambda$, and let $N = \epsilon(n-1) + \tau(n-1)(n-2)/2$; see \S \ref{ssec:topological-setting}.  

The associated \textit{dotted ideal triangulation} consists of $\lambda$ together with $N$ distinct black \textit{dots} attached to the edges $E$ and triangles $\mathfrak{T}$ of $\lambda$, where there are $n-1$ \textit{edge-dots} attached to each edge $E$ and $(n-1)(n-2)/2$ \textit{triangle-dots} attached to each triangle $\mathfrak{T}$ (punctures, that is, triangle vertices, are always drawn as white dots).  For each triangle $\mathfrak{T}$ including its three boundary edges $E_1$, $E_2$, $E_3$, these dots are arranged  as the vertices of the discrete $n$-triangle $\Theta_n$ (minus its three corner vertices) overlaid on top of the ideal triangle $\mathfrak{T}$; see Figures \ref{fig:example-ideal-triangulations-dotted} and \ref{fig:coordinates-on-a-triangulation}.  We  talk about \textit{boundary-dots} or \textit{interior-dots} depending on whether the dots are on the boundary of interior of the surface.

Given a triangle $\mathfrak{T}$ of $\lambda$, which acquires an orientation from the orientation of $\mathfrak{S}$, and given an edge $E$ of $\mathfrak{T}$, it makes sense to say that an edge-dot on $E$ is \textit{to the left} or \textit{to the right} of another edge-dot on $E$ as viewed from the triangle $\mathfrak{T}$; see Figure \ref{fig:dotted-triangle}.  

We always assume that we have chosen an ordering for the $N$ dots lying on the dotted ideal triangulation $\lambda$, so we can talk about the $i$-th dot, $i=1,2,\dots,N$.  

\begin{figure}[htb]
     \centering
     \begin{subfigure}{0.43\textwidth}
         \centering
         \includegraphics[width=.45\textwidth]{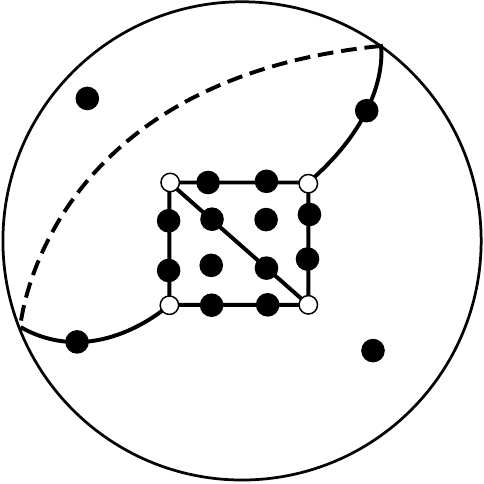}
         \caption{Four times punctured sphere}
         \label{fig:four-times-punctured-sphere-triang-dotted}
     \end{subfigure}     
\hfill
     \begin{subfigure}{0.43\textwidth}
         \centering
         \includegraphics[width=.48\textwidth]{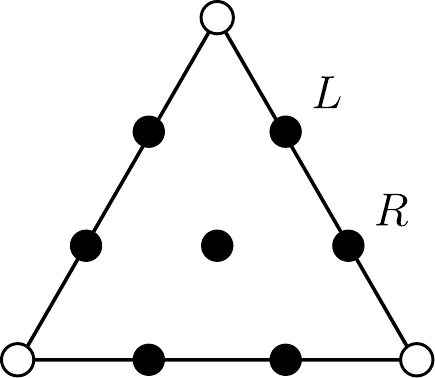}
         \caption{Ideal triangle}
         \label{fig:dotted-triangle}
     \end{subfigure}
        \caption{Dotted ideal triangulations ($n=3$).}
        \label{fig:example-ideal-triangulations-dotted}
\end{figure}

\begin{figure}[htb]
	\centering
	\includegraphics[scale=.55]{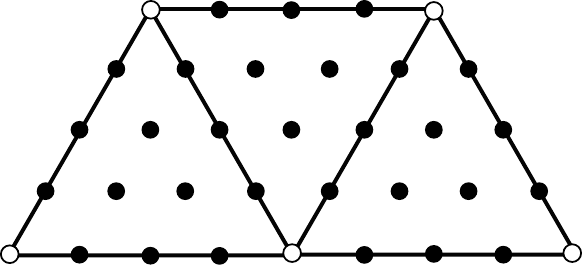}
	\caption{Dotted ideal triangulation ($n=4$).}
	\label{fig:coordinates-on-a-triangulation}
\end{figure}

\subsection{Classical polynomial algebra}
\label{sec:Fock-Goncharov-coordinates-on-a-triangulated-surface}

Let the punctured  surface $\mathfrak{S}$ be equipped with a dotted ideal triangulation $\lambda$.  

\begin{definition}
\label{def:generators}
The \textit{classical polynomial algebra} $\mathscr{T}_n^1(\lambda) = \mathbb{C}[X_1^{\pm 1/n}, X_2^{\pm 1/n}, \dots, X_N^{\pm 1/n}]$ associated to the dotted ideal triangulation $\lambda$ is the commutative algebra of Laurent polynomials generated by formal $n$-roots $X_i^{1/n}$ and their inverses.  We think of the generator $X_i^{1/n}$ as associated to the $i$-th dot lying on $\lambda$.  As for dots, see \S \ref{ssec:dotted-ideal-triangulations}, we speak of \textit{edge-} and \textit{triangle-generators} as well as \textit{boundary-} and \textit{interior-generators}.  Elements $X_i^{\pm 1} = (X_i^{\pm 1/n})^n$ of $\mathscr{T}_n^1(\lambda)$ are called \textit{coordinates}.  We often indicate edge-coordinates with the letter $Z$ instead of $X$.  
\end{definition}

\begin{remark}
\label{rem:boundary-coordinates}
The algebraic coordinates $X_i$ in the classical polynomial algebra correspond to Fock--Goncharov's geometric coordinates for the framed $\mathrm{PSL}_n(\mathbb{C})$-character variety $\mathscr{R}_{\mathrm{PSL}_n(\mathbb{C})}(\mathfrak{S})_{\mathrm{fr}}$; see \S \ref{sec:introduction}.  As a caveat,  in the classical geometric setting the Fock--Goncharov coordinates $X_i$ are  associated only  to the interior-dots (not to the boundary-dots), while in the quantum algebraic setting there are coordinates $X_i$ associated to the boundary-dots as well.  
	
In the language of cluster algebras \cite{FominJAmerMathSoc02}, these boundary-variables are also called \textit{frozen variables} \cite{Fock06, FockIHES06, FominArxiv16}.  From the classical geometric point of view, one can think of the frozen boundary-coordinates as having the potential to become `actual' un-frozen interior-coordinates if the surface-with-boundary $\mathfrak{S}$ were included inside the interior of a larger surface $\mathfrak{S}^\prime$.  At the quantum algebraic level, the inclusion of boundary-coordinates is an essential  step in order to observe the local quantum properties; see, for instance, Theorem \ref{thm:first-theorem}.  

In the case of $\mathrm{SL}_2(\mathbb{R})$ or $\mathrm{SL}_2(\mathbb{C})$, the Fock--Goncharov coordinates $X_i$ coincide with the shear or shear-bend coordinates for Teichm\"{u}ller space due to Thurston \cite{Thurston97}; see \cite{bonahonMR1413855,FockArxiv97, Fock07,Hollands16LettMathPhys} for more details.  There is also a geometric interpretation of Fock--Goncharov's coordinates in the case $n=3$, where  the coordinates parametrize convex projective structures on the surface $\mathfrak{S}$; see \cite{FockAdvMath07, CasellaMathSocJapMem20}.  
\end{remark}

\subsection{Elementary edge and triangle matrices}
\label{sec:theorem-3}

Let $\mathrm{M}_n(\mathscr{T}_n^1(\lambda))$ denote the   algebra of $n \times n$ matrices with coefficients in the commutative classical polynomial algebra $\mathscr{T}_n^1(\lambda)$ $=$ $\mathbb{C}[X_1^{\pm 1/n}, X_2^{\pm 1/n}, \dots, X_N^{\pm 1/n}]$; see Definition  \ref{def:matrix-algebra}.  Let the special linear group $\mathrm{SL}_n(\mathscr{T}_n^1(\lambda))$  be the subset of $\mathrm{M}_n(\mathscr{T}_n^1(\lambda))$ consisting of the matrices with determinant equal to $1$.  
  
Let $Z = X_i^{\pm 1}$ be an edge-coordinate in the classical polynomial algebra $\mathscr{T}_n^1(\lambda)$.  For $j = 1, 2, \dots, n-1$ define the \textit{$j$-th elementary edge matrix} $\vec{S}^\mathrm{edge}_j(Z) \in \mathrm{SL}_n(\mathscr{T}_n^1(\lambda))$ by 
\begin{equation*}
\label{eq:shearing-matrix}
	\vec{S}^\mathrm{edge}_j(Z) \overset{\text{def}}{=} Z^{-j/n}
	\left(\begin{smallmatrix}
		Z&&&&&&&\\
		&Z&&&&&&\\
		&&\ddots&&&&&\\
		&&&Z&&&&\\
		&&&&1&&&\\
		&&&&&1&&\\
		&&&&&&\ddots&\\
		&&&&&&&1
	\end{smallmatrix}\right)
	  \in \mathrm{SL}_n(\mathscr{T}_n^1(\lambda))\,\,
	  \left(Z \text{ appears } j \text{ times}\right).
\end{equation*}
Note the normalizing factor $Z^{-j/n}$ multiplying the matrix on the left (or on the right).  
Similarly, for any triangle-coordinate $X = X_i^{\pm 1}$ in $\mathscr{T}_n^1(\lambda)$ and for any index $j = 1, 2, \dots, n-1$ define the \textit{$j$-th left elementary triangle matrix} $\vec{S}^\mathrm{left}_j (X) \in \mathrm{SL}_n(\mathscr{T}_n^1(\lambda))$ by
\begin{equation*}
\label{eq:upper-triangular-matrix}
	\vec{S}^\mathrm{left}_j(X) \overset{\text{def}}{=} X^{-(j-1)/n}
	\left(\begin{smallmatrix}
		X&&&&&&&\\
		&\ddots&&&&&&\\
		&&X&&&&&\\
		&&&1&1&&&\\
		&&&&1&&&\\
		&&&&&1&&\\
		&&&&&&\ddots&\\
		&&&&&&&1
	\end{smallmatrix}\right)
	\in  \mathrm{SL}_n(\mathscr{T}_n^1(\lambda))\,\,
	  \left(X \text{ appears } j-1 \text{ times}\right),
\end{equation*}
and define the \textit{$j$-th right elementary triangle matrix} $\vec{S}^\mathrm{right}_j (X) \in \mathrm{SL}_n(\mathscr{T}_n^1(\lambda))$ by
\begin{equation*}
\label{eq:lower-triang-matrix}
	\vec{S}^\mathrm{right}_j(X) \overset{\text{def}}{=}
	X^{(j-1)/n}
	\left(\begin{smallmatrix}
		1&&&&&&&\\
		&\ddots&&&&&&\\
		&&1&&&&&\\
		&&&1&&&\\
		&&&1&1&&&\\
		&&&&&X^{-1}&&\\
		&&&&&&\ddots&\\
		&&&&&&&X^{-1}
	\end{smallmatrix}\right)	
	 \in \mathrm{SL}_n(\mathscr{T}_n^1(\lambda))\,\,
	  \left(X \text{ appears } j-1 \text{ times}\right).  
\end{equation*}
Note that $\vec{S}^\mathrm{left}_1(X)$ and $\vec{S}^\mathrm{right}_1(X)$ do not actually involve the variable $X$, so we will denote these matrices  by $\vec{S}^\mathrm{left}_1$ and $\vec{S}^\mathrm{right}_1$, respectively.  

\begin{remark}
\label{rem:classical-coordinates-snakes}
In the theory of Fock--Goncharov, these elementary matrices for edges and triangles are called \textit{snake-move matrices}.  Each such matrix is the coordinate transformation matrix passing between a pair of compatibly normalized projective coordinate systems associated to a pair of adjacent \textit{snakes}.  For a framed local system in $\mathscr{R}_{\mathrm{PSL}_n(\mathbb{C})}(\mathfrak{S})_{\mathrm{fr}}$ with  coordinates $X_i$, computing the monodromy of the local system around a curve $\gamma$ amounts to multiplying together a sequence of snake-move matrices along the direction of the curve.  For more details, see \cite[\S 9]{FockIHES06}, \cite[Appendix A]{GaiottoAnnHenriPoincare14}, and \cite[Chapter 2]{DouglasThesis20}.   
\end{remark}

\subsection{Local monodromy matrices}
\label{sec:local-monodromy-matrices}
	
 Let $\mathfrak{T}$ be a dotted ideal triangle, which we think of as sitting inside a larger dotted ideal triangulation $\lambda$; see Figure \ref{fig:example-ideal-triangulations-dotted}.  We assign $n \times n$ matrices with coefficients in the classical polynomial algebra $\mathscr{T}_n^1(\lambda) = \mathbb{C}[X_1^{\pm 1/n}, X_2^{\pm 1/n}, \dots, X_N^{\pm 1/n}]$ to various `short' oriented arcs lying on the surface $\mathfrak{S}$.  

Recall that we think of the $n$-discrete triangle $\Theta_n$ (\S \ref{subsec:the-discrete-triangle}) as overlaid on top of the triangle $\mathfrak{T}$, so that there is a one-to-one correspondence between coordinates $X_i = X_{abc}^{}$ in $\mathscr{T}_n^1(\mathfrak{T}) \subset \mathscr{T}_n^1(\lambda)$ and vertices $(a,b,c)$ in $\Theta_n - \{ (n,0,0), (0,n,0), (0,0,n) \}$ (in other words, in the $n$-discrete triangle $\Theta_n$ minus its three corner vertices).  Note that $X_{abc}^{}$ is a triangle-coordinate if and only if $(a, b, c)$ is an interior point $(a, b, c) \in \mathrm{int}(\Theta_n)$, otherwise $X_{abc}^{}$ is an edge-coordinate.  

We will use the following notational convention.  Given an arbitrary family $\vec{M}_i$ of $n \times n$ matrices, put
\begin{gather*}
	\prod_{i=M}^N \vec{M}_i \overset{\text{def}}{=} \vec{M}_{M} \vec{M}_{M+1} \cdots \vec{M}_N,\,\,
	  \prod_{i=N+1}^M \vec{M}_i \overset{\text{def}}{=} 1\,\,
	  \left(  M \leq N  \right),
\\	\coprod_{i=N}^M \vec{M}_i \overset{\text{def}}{=} \vec{M}_N \vec{M}_{N-1} \cdots \vec{M}_M,\,\,
	  \coprod_{i=M-1}^N \vec{M}_i \overset{\text{def}}{=} 1\,\,
	  \left(  M \leq N  \right).
\end{gather*}

First, consider a \textit{left-moving arc} $\overline{\gamma}$, as shown in Figure \ref{fig:left-matrix}.  We assume $\overline{\gamma}$ has no kinks (Figures \ref{fig:positive-kink-skein-relation} and \ref{fig:negative-kink-skein-relation}).  Let $\vec{X}=(X_i)$ be a vector consisting of the triangle-coordinates $X_i$ (Definition \ref{def:generators}).  Define the associated \textit{left matrix} $\vec{M}^\mathrm{left}(\vec{X})$ in $\mathrm{SL}_n(\mathscr{T}_n^1(\lambda))$ by

\begin{equation*}
\label{eq:explicit-quantum-left-matrix}
\vec{M}^\mathrm{left}(\vec{X}) \overset{\text{def}}{=} 
\coprod_{i=n-1}^1 \left( \vec{S}^\mathrm{left}_1 \prod_{j=2}^i \vec{S}^\mathrm{left}_j(X_{(j-1)(n-i)(i-j+1)}) \right)
\in \mathrm{SL}_n(\mathscr{T}_n^1(\lambda)),
\end{equation*}
where the matrix $\vec{S}_j^\mathrm{left}(X_{abc})$ is the $j$-th left elementary triangle matrix; see \S \ref{sec:theorem-3}.  (The dots colored red in the figure correspond to the coordinates appearing in the expression of the matrix associated to the curve.)

Next, consider a \textit{right-moving arc} $\overline{\gamma}$, as shown in Figure \ref{fig:right-matrix}.  We assume $\overline{\gamma}$ has no kinks.  We define the associated \textit{right matrix} $\vec{M}^\mathrm{right}(\vec{X}) $ in $ \mathrm{SL}_n(\mathscr{T}_n^1(\lambda))$ by
\begin{equation*}
\label{eq:explicit-quantum-right-matrices}
\vec{M}^\mathrm{right}(\vec{X}) \overset{\text{def}}{=} 
 \coprod_{i=n-1}^1 \left( \vec{S}^\mathrm{right}_1 \prod_{j=2}^i \vec{S}^\mathrm{right}_j(X_{(i-j+1)(n-i)(j-1)}) \right)
   \in \mathrm{SL}_n(\mathscr{T}_n^1(\lambda)),
\end{equation*}
where the matrix $\vec{S}_j^\mathrm{right}(X_{abc})$ is the $j$-th right elementary triangle matrix; see \S \ref{sec:theorem-3}.  

Next, consider an \textit{edge-crossing arc} $\overline{\gamma}$, as shown on the left hand or right hand side of Figure \ref{fig:edge-matrix}.    Let $Z_j$, $j=1,2,\dots,n-1$, be the $j$-th edge-coordinate (Definition \ref{def:generators}), measured from right to left as seen by the triangle out of which the arc is moving.  Let $\vec{Z}=(Z_j)$ be a vector consisting of the $Z_j$'s.  Define the associated \textit{edge matrix} $\vec{M}^\mathrm{edge}(\vec{Z}) $ in $ \mathrm{SL}_n(\mathscr{T}_n^1(\lambda))$ by
\begin{equation*}
\label{eq:edge-matrix}
	\vec{M}^\mathrm{edge}(\vec{Z}) 
	\overset{\text{def}}{=} \prod_{j=1}^{n-1} \vec{S}^\mathrm{edge}_j(Z_j)
  \in \mathrm{SL}_n(\mathscr{T}_n^1(\lambda)),
\end{equation*}
where the matrix $\vec{S}_j^\mathrm{edge}(Z_j)$ is the $j$-th  elementary edge matrix; see \S \ref{sec:theorem-3}.  
Note that if the orientation of the edge-crossing arc is reversed, then the edge matrix changes by permuting the coordinates $Z_j$ by $Z_j \leftrightarrow Z_{n-j}$; see Figure \ref{fig:edge-matrix}.  

Observe that the order in which the elementary matrices $\vec{S}_j$ are multiplied does not matter in the formula for the edge matrix $\vec{M}^\mathrm{edge}(\vec{Z})$, since they are diagonal, but does matter in the formulas for the triangle matrices $\vec{M}^\mathrm{left}(\vec{X})$ and $\vec{M}^\mathrm{right}(\vec{X})$.

Lastly, define the \textit{clockwise U-turn matrix} $\vec{U} $ in $ \mathrm{SL}_n(\mathbb{C})$ by 
\begin{equation*}
\label{eq:U-turn-matrix}
	\vec{U}
	\overset{\text{def}}{=}  \left(\begin{smallmatrix}
		&&&&\\
		&&&&(-1)^{n-1}\\
		&&&\reflectbox{$\ddots$}&\\
		&&+1&&\\
		&-1&&&\\
		+1&&&&
	\end{smallmatrix}\right)
  \in  \mathrm{SL}_n(\mathbb{C}).
\end{equation*}
We associate to a \textit{clockwise U-turn arc} (resp. \textit{counterclockwise U-turn arc}) $\overline{\gamma}$, as shown on the left hand (resp. right hand) side of Figure \ref{fig:U-turn-matrices},  the U-turn matrix $\vec{U}$ (resp. transpose $\vec{U}^\mathrm{T}$ of the U-turn matrix).  Again, we have assumed that $\overline{\gamma}$ has no kinks.  Note that $\vec{U}^\mathrm{T} = - \vec{U}$ (resp. $=\vec{U}$) when $n$ is even (resp. odd).  

\begin{figure}[htb]
	\centering
	\includegraphics[scale=.5]{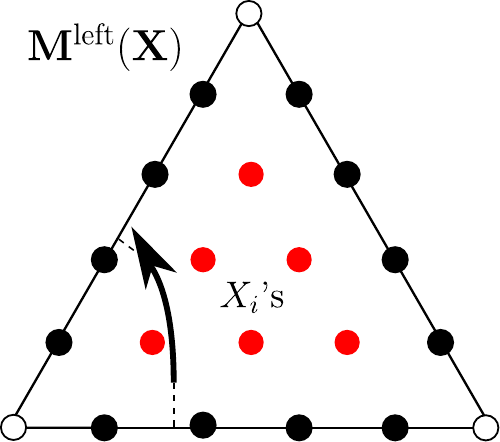}
	\caption{Left matrix ($n=5$).}
	\label{fig:left-matrix}
\end{figure}

\begin{figure}[htb]
	\centering
	\includegraphics[scale=.5]{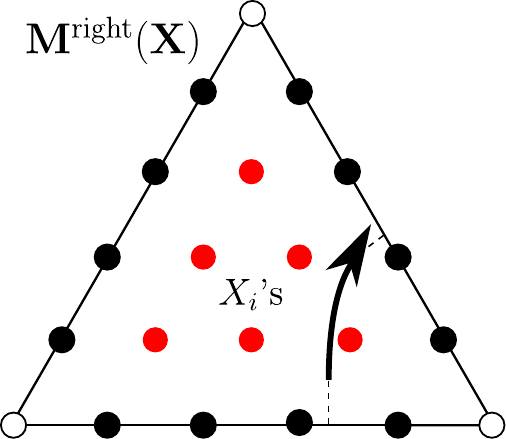}
	\caption{Right matrix ($n=5$).}
	\label{fig:right-matrix}
\end{figure}

\begin{figure}[htb]
	\centering
	\includegraphics[scale=.8]{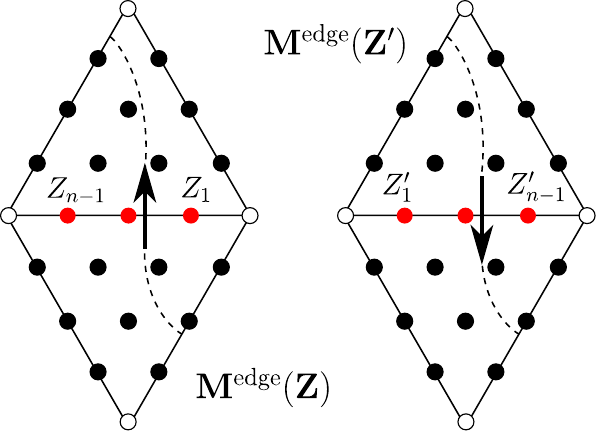}
	\caption{Edge matrices $(n=4)$.}
	\label{fig:edge-matrix}
\end{figure}

\begin{figure}[htb]
     \centering
     \begin{subfigure}{0.49\textwidth}
         \centering
         \includegraphics[scale=.6]{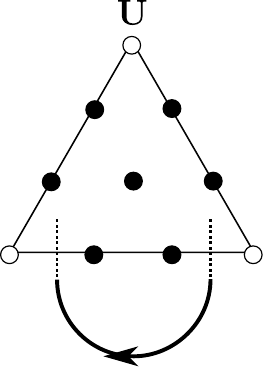}
         \caption{Clockwise U-turn}
         \label{fig:clockwise-U-turn}
     \end{subfigure}     
\hfill
     \begin{subfigure}{0.49\textwidth}
         \centering
         \includegraphics[scale=.6]{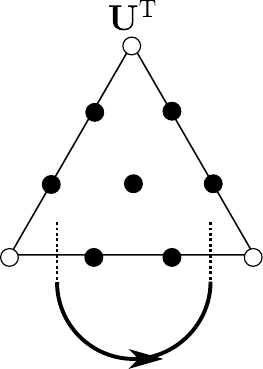}
         \caption{Counterclockwise U-turn}
         \label{fig:counter-clockwise-U-turn}
     \end{subfigure}
     	\caption{U-turn matrices $(n=3)$.}
        \label{fig:U-turn-matrices}
\end{figure}

\subsection{Definition of the \texorpdfstring{$\mathrm{SL}_n$}{SLn}-classical trace polynomials}
\label{sec:computing-the-classical-trace-polynomial}

Let $\gamma$ be an immersed oriented closed curve in the surface $\mathfrak{S}$ such that $\gamma$ is transverse to the ideal triangulation $\lambda$.  We want to calculate the classical trace polynomial $\widetilde{\mathrm{Tr}}_\gamma(X_i^{1/n})$ in $\mathscr{T}_n^1(\lambda)$ $=$ $\mathbb{C}[X_1^{\pm 1/n}, X_2^{\pm 1/n}, \dots, X_N^{\pm 1/n}]$ associated to the immersed curve $\gamma$.  We say that the polynomial $\widetilde{\mathrm{Tr}}_\gamma(X_i^{1/n})$ is obtained from a `state sum', `local-to-global', or `transfer matrices' construction.  

More precisely, as we travel along the curve $\gamma$ according to its orientation, assume $\gamma$ crosses edges $E_{j_k}$ for $k=1, 2, \dots, K$ in that order, and assume $\gamma$ crosses triangles $\mathfrak{T}_{i_k}$ for $k =1, 2, \dots, K$ in that order.  
As the curve $\gamma$ crosses the edge $E_{j_k}$, moving out of the triangle $\mathfrak{T}_{i_{k-1}}$ into the triangle $ \mathfrak{T}_{i_k}$, this defines an edge-crossing arc $\overline{\gamma}_{j_k}$; see \S \ref{sec:local-monodromy-matrices} and Figure \ref{fig:edge-matrix}.  Put $\vec{Z}_{j_k} = ((Z_{j_k})_{j^\prime})$ and put
\begin{equation*}
	\vec{M}^\mathrm{edge}_{j_k} \overset{\text{def}}{=} \vec{M}^\mathrm{edge}(\vec{Z}_{j_k})  \in \mathrm{SL}_n(\mathscr{T}_n^1(\lambda)),
\end{equation*} 
 the associated edge matrix, where the $(Z_{j_k})_{j^\prime}\text{'s}$ are the $j^\prime = 1, \dots, n-1$ edge-coordinates attached to the edge $E_{j_k}$ measured from right to left as seen from $\mathfrak{T}_{i_{k-1}}$.    
As $\gamma$ traverses the triangle $\mathfrak{T}_{i_k}$ between two edges $E_{j_k}$ and $E_{j_{k+1}}$, it does one of three things:
\begin{itemize}
	\item  the curve $\gamma$ turns left, ending on $E_{j_{k+1}} \neq E_{j_k}$, see Figure \ref{fig:left-matrix};
	\item  or $\gamma$ turns right, ending on $E_{j_{k+1}} \neq E_{j_k}$, see Figure \ref{fig:right-matrix};
	\item  or $\gamma$ does a U-turn, thereby returning to the same edge $E_{j_{k+1}} = E_{j_{k}}$, see Figure \ref{fig:U-turn-matrices}.
\end{itemize}
We also keep track of the following winding information:  for the first and second items above, the number $t_k$   of full turns  to the right that the curve $\gamma$ makes while traversing the triangle $\mathfrak{T}_{i_k}$; and for the third item above, the number $2 t_k + 1$   of half turns  to the right that the curve $\gamma$ makes before coming back to the same edge $E_{j_k}$.  Note $t_k \in \mathbb{Z}$.  Note that the turning integer $t_k$ associated to the curve $\gamma$ as it traverses the triangle $\mathfrak{T}_{i_k}$ will only be relevant when  $n$  is even.  Let the $(X_{i_k})_{i^\prime}\text{'s}$ be the $i^\prime = 1, \dots, (n-1)(n-2)/2$ triangle-coordinates attached to the triangle $\mathfrak{T}_{i_k}$, and put $\vec{X}_{i_k} = ((X_{i_k})_{i^\prime})$; see Figure \ref{fig:left-matrix}.  Let $\gamma^\prime$ be the curve obtained by `pulling tight' the kinks of $\gamma$.  (In particular, $\gamma$ and $\gamma^\prime$ are homotopic, but not, in general, regularly homotopic.)  Corresponding to the three items above:
\begin{itemize}
	\item  the curve $\gamma^\prime$ turns left, defining a left-moving arc $\overline{\gamma}^\prime$ and an associated left matrix $\vec{M}^\prime_{i_k} = \vec{M}^{\mathrm{left}}(\vec{X}_{i_k})$, see \S \ref{sec:local-monodromy-matrices} and Figure \ref{fig:left-matrix};
	\item  or the curve $\gamma^\prime$ turns right, defining a right-moving arc $\overline{\gamma}^\prime$ and an associated right matrix $\vec{M}^\prime_{i_k} = \vec{M}^{\mathrm{right}}(\vec{X}_{i_k})$, see \S \ref{sec:local-monodromy-matrices} and Figure \ref{fig:right-matrix};
	\item  or the curve $\gamma^\prime$ does a clockwise or counterclockwise U-turn, thereby returning to the same edge $E_{j_{k+1}} = E_{j_k}$ and defining a U-turn arc $\overline{\gamma}^\prime$, see \S \ref{sec:local-monodromy-matrices} and Figure \ref{fig:U-turn-matrices}.  \end{itemize}
In the first two cases, where $\overline{\gamma}^\prime$ is either left- or right-moving, put
\begin{equation*}
	\vec{M}_{i_k} \overset{\text{def}}{=} (-1)^{(n-1)t_k} \vec{M}^{\prime}_{i_k}
	 \in \mathrm{SL}_n(\mathscr{T}_n^1(\lambda)),
\end{equation*}
and in the third case, where $\overline{\gamma}^\prime$ is a U-turn, put
\begin{equation*}	
	\vec{M}_{i_k} \overset{\text{def}}{=} (-1)^{(n-1)t_k} \vec{U}
	 \in \mathrm{SL}_n(\mathbb{C}),
\end{equation*}
where $\vec{U}$ is the U-turn matrix defined in \S \ref{sec:local-monodromy-matrices}.  Note, in the third case, that this is consistent with what was said in \S \ref{sec:local-monodromy-matrices}, where, in the case $\overline{\gamma}$ has no kinks, $\overline{\gamma}^\prime = \overline{\gamma}$ is associated to $\vec{U}$ (resp. $\vec{U}^\mathrm{T} = (-1)^{n-1} \vec{U}$) if $\overline{\gamma}$ travels clockwise hence $t_k = 0$ (resp. counterclockwise hence $t_k = -1$).  

\begin{definition}
\label{def:second-classical-trace}
	The \textit{$\mathrm{SL}_n$-classical trace polynomial} $\widetilde{\mathrm{Tr}}_\gamma(X_i^{1/n}) \in \mathscr{T}_n^1(\lambda)$ associated to the immersed oriented closed curve $\gamma$, transverse to the ideal triangulation $\lambda$, is defined by
	\begin{equation*}
		\widetilde{\mathrm{Tr}}_\gamma(X_i^{1/n}) 
		\overset{\text{def}}{=}  \mathrm{Tr}  (
		\vec{M}^\mathrm{edge}_{j_1} \vec{M}_{i_1} \vec{M}^\mathrm{edge}_{j_2} \vec{M}_{i_2} \cdots \vec{M}^\mathrm{edge}_{j_K} \vec{M}_{i_K}
		)
		 \in \mathscr{T}_n^1(\lambda)
		=
		\mathbb{C}[X_1^{\pm 1/n}, \dots, X_N^{\pm 1/n}],
	\end{equation*}
	where on the right hand side we have taken the usual matrix trace.  Note that this is independent of where one starts along the curve $\gamma$, by the conjugation invariance of the trace.  
\end{definition}

\subsection{Relation to Fock--Goncharov theory}	

In this subsection, we assume (for simplicity) the surface $\mathfrak{S}$ has empty boundary, $\partial \mathfrak{S} = \emptyset$.    A \textit{complete flag} $E\in\mathrm{Flag}(\mathbb{C}^n)$ is a maximal nested sequence of distinct sub-spaces $\left\{0\right\}=E^{(0)} \subsetneq E^{(1)} \subsetneq E^{(2)} \subsetneq \cdots \subsetneq E^{(n)} = \mathbb{C}^n$.    The group $\mathrm{PSL}_n(\mathbb{C})$ acts on the set of complete flags by matrix multiplication.  

In \cite{FockIHES06}, Fock--Goncharov define the \textit{moduli space of framed local systems}, also called the \textit{$\mathscr{X}$-moduli space}, whose complex points have been denoted in this paper by $\mathscr{R}_{\mathrm{PSL}_n(\mathbb{C})}(\mathfrak{S})_\mathrm{fr}$.  
A framed local system over $\mathbb{C}$ is, roughly speaking, a pair $(\rho, \xi)$ where $\rho: \pi_1(\mathfrak{S}) \to \mathrm{PSL}_n(\mathbb{C})$ is a group homomorphism and $\xi$  assigns to each puncture $p$  a complete flag $\xi(p)$ invariant under the monodromy of any peripheral curve $\gamma_p$ around $p$, that is, $\rho(\gamma_p)(\xi(p))=\xi(p) \in \mathrm{Flag}(\mathbb{C}^n)$.  Framed local systems are considered up to equivalence under the action of $\mathrm{PSL}_n(\mathbb{C})$, which in particular acts on representations $\rho$ by conjugation.

Fock--Goncharov associate to each ideal triangulation $\lambda$ of $\mathfrak{S}$ a coordinate chart $U_\lambda$ for the moduli space $\mathscr{R}_{\mathrm{PSL}_n(\mathbb{C})}(\mathfrak{S})_\mathrm{fr}$ parametrized by non-zero complex coordinates $X_i$.       Here,  $N$ is the number of dots  associated to the ideal triangulation $\lambda$; see \S \ref{ssec:dotted-ideal-triangulations},  \ref{sec:Fock-Goncharov-coordinates-on-a-triangulated-surface}.     In particular, choosing $\lambda$ and $N$-many coordinates $X_i$ determines a representation $\rho=\rho(X_i)$ up to conjugation.    Since $\rho$ is valued in $\mathrm{PSL}_n(\mathbb{C})$, the trace $\mathrm{Tr}(\rho(\gamma))$ for any $\gamma \in \pi_1(\mathfrak{S})$ is well-defined only up to multiplication by a $n$-root of unity.  

\begin{theorem}[\cite{FockIHES06}, $\mathrm{SL}_n$-classical trace polynomials]  Fix an ideal triangulation $\lambda$ of the punctured surface $\mathfrak{S}$.  Let $\rho=\rho(X_i)$ be a  representation $\rho: \pi_1(\mathfrak{S}) \to \mathrm{PSL}_n(\mathbb{C})$ given by Fock--Goncharov coordinates $X_i$ as above.  Moreover, choose arbitrary $n$-roots $X_i^{1/n} \in \mathbb{C}-\{0\}$.  Then for each immersed oriented closed curve $\gamma$ in $\mathfrak{S}$ transverse to the ideal triangulation $\lambda$, the trace $\mathrm{Tr}(\rho(\gamma))$ equals the evaluated classical trace polynomial $\widetilde{\mathrm{Tr}}_\gamma(X_i^{1/n}) \in \mathbb{C}$ (Definition {\upshape\ref{def:second-classical-trace}}) up to multiplication by a $n$-root of unity.  \qed
\end{theorem}

\section{Quantum matrices for \texorpdfstring{$\mathrm{SL}_n$}{SLn}}
\label{sec:quantum-matrices}

In this section, we will define quantum versions of the classical polynomial algebra and the classical local monodromy matrices, and relate them to the quantum special linear group $\mathrm{SL}_n^q$.  
Throughout, let $q \in \mathbb{C} - \left\{ 0 \right\}$ and $\omega = q^{1/n^2}$ be a $n^2$-root of $q$.  Technically, also choose $\omega^{1/2}$.  

\subsection{Quantum tori, matrix algebras, and the Weyl quantum ordering}
\label{ssec:quantum-tori-matrix-algebras-and-weyl-ordering}

\subsubsection{Quantum tori}
\label{sssec:quantum-tori}
	
For a natural number $N^\prime>0$, let $\vec{P}$ (for `Poisson') be an integer $N^\prime \times N^\prime$ anti-symmetric matrix.  

\begin{definition}
The \textit{quantum torus (with $n$-roots)} $\mathscr{T}^\omega(\vec{P})$ associated to $\vec{P}$ is the quotient of the free algebra $\mathbb{C}\{ X_1^{1/n}, X_1^{-1/n}, \dots, X_{N^\prime}^{1/n}, X_{N^\prime}^{-1/n}\}$ in the indeterminates $X_i^{\pm 1/n}$ by the two-sided ideal generated by the relations
\begin{equation*}	
\label{q-commutation-relations}
	X_i^{m_i/n} X_j^{m_j/n} 
	= \omega^{\vec{P}_{ij} m_i m_j} X_j^{m_j/n} X_i^{m_i/n}
	\,\, (m_i, m_j \in \mathbb{Z}),\,\,
	X_i^{m/n} X_i^{-{m/n}} = X_i^{-{m/n}} X_i^{m/n} = 1
	\,\,	(m \in \mathbb{Z}). 
\end{equation*}
Put $X_i^{\pm 1} = (X_i^{\pm 1/n})^n$.  We refer to the $X_i^{\pm 1/n}$ as \textit{generators}, and the $X_i$ as \textit{quantum coordinates}, or just \textit{coordinates}.  Define the subset of fractions
\begin{equation*}
	\mathbb{Z}/n \overset{\text{def}}{=} \left\{ {m/n};  m \in \mathbb{Z}\right\}  \subset \mathbb{Q}.
\end{equation*}  
Written in terms of the coordinates $X_i$ and the fractions $r \in \mathbb{Z}/n$, the relations above become
\begin{equation*}
\label{eq:easier-to-read-q-commutation-relations}
	X_i^{r_i} X_j^{r_j} = q^{\vec{P}_{ij} r_i r_j} X_j^{r_j} X_i^{r_i}
	\,\,  (r_i, r_j \in \mathbb{Z}/n),
\,\,
	X_i^r X_i^{-r} = X_i^{-r} X_i^r = 1
	\,\,	(r \in \mathbb{Z}/n). 
\end{equation*}
\end{definition}

\subsubsection{Matrix algebras}
\label{sssec:matrix-algebras}
	
\begin{definition}
\label{def:matrix-algebra}
	Let $\mathscr{T}$ be a, possibly non-commutative, algebra, and let $n^\prime$ be a positive integer.  The \textit{matrix algebra with coefficients in $\mathscr{T}$}, denoted $\mathrm{M}_{n^\prime}(\mathscr{T})$, is the complex vector space of $n^\prime \times n^\prime$ matrices, equipped with the usual multiplicative structure.  Specifically, the product $\vec{M} \vec{N}$ of two matrices $\vec{M}$ and $\vec{N}$ is defined entry-wise by 
\begin{equation*}
	(\vec{M} \vec{N})_{ij} \overset{\text{def}}{=} \sum_{k=1}^{n^\prime} \vec{M}_{ik} \vec{N}_{kj}  \in \mathscr{T}
	\,\,  \left(  1 \leq i, j \leq n^\prime  \right).
\end{equation*}  
As usual, the entry $\vec{M}_{ij}$ of a matrix $\vec{M}$ is the entry in the $i$-th row and $j$-th column.  Note that the order of $\vec{M}_{ik}$ and $\vec{N}_{kj}$ in the above equation matters since these elements might not commute in $\mathscr{T}$.  
\end{definition}

\subsubsection{Weyl quantum ordering}
\label{sssec:weyl-ordering}
	
If $\mathscr{T}$ is a quantum torus (\S \ref{sssec:quantum-tori}), then there is a linear map
\begin{equation*}
\label{eq:weyl-ordering-eq-1}
	[-] \colon \mathbb{C}\{ X_1^{\pm 1/n}, \dots, X_{N^\prime}^{\pm 1/n}\} \to \mathscr{T},
\end{equation*}
from the free algebra to $\mathscr{T}$, called the \textit{Weyl quantum ordering}, defined by the property that a word
	$X_{i_1}^{r_1} X_{i_2}^{r_2} \cdots X_{i_k}^{r_k}$ for $r_a \in \mathbb{Z}/n$ (note $i_a$ may equal $i_b$ if $a \neq b$)
 is mapped to
\begin{equation*}
\label{eq:weyl-quantum-ordering-def}
	[X_{i_1}^{r_1} X_{i_2}^{r_2} \cdots X_{i_k}^{r_k}] \overset{\text{def}}{=} \left( q^{-\frac{1}{2} \sum_{1 \leq a < b \leq k} \vec{P}_{i_a i_b} r_a r_b} \right) X_{i_1}^{r_1} X_{i_2}^{r_2} \cdots X_{i_k}^{r_k}\in\mathscr{T}.
\end{equation*}
Also, the empty word is mapped to $1$.  Note the Weyl ordering $[-]$ depends on the choice of $\omega^{1/2}$; see the beginning of \S \ref{sec:quantum-matrices}.  
The Weyl ordering is specially designed to satisfy the symmetry
\begin{equation*}
\label{eq:symmetric-property-of-the-Weyl-ordering}
	[X_{i_1}^{r_1} \cdots X_{i_k}^{r_k}] 
	= [X_{i_{\sigma(1)}}^{r_{\sigma(1)}} \cdots X_{i_{\sigma(k)}}^{r_{\sigma(k)}}]\in\mathscr{T},
\end{equation*}
for every permutation $\sigma$ of $\left\{1, \dots, k\right\}$.  Also, $[X_i^{1/n} X_i^{-1/n}]=1$.  Let
\begin{equation*}
	[-] \colon \mathbb{C}[ X_1^{\pm 1/n}, \dots, X_{N^\prime}^{\pm 1/n}] \to \mathscr{T},
\end{equation*}
be the induced linear map from the  commutative Laurent polynomial algebra to $\mathscr{T}$.  This determines a linear map of matrix algebras
\begin{equation*}
	[-] \colon \mathrm{M}_{n^\prime}( \mathbb{C}[ X_1^{\pm 1/n}, \dots, X_{N^\prime}^{\pm 1/n}]) \to \mathrm{M}_{n^\prime}(\mathscr{T}), 
	\,\, 	[\vec{M}]_{ij} \overset{\text{def}}{=} [\vec{M}_{ij}]  \in \mathscr{T}.
\end{equation*} 

\subsection{Fock--Goncharov quantum torus for a triangle}	
\label{sec:Fock-Goncharov-algebra-for-a-triangle}
	
Let $\Gamma(\Theta_n)$ denote the set of corner vertices $\Gamma(\Theta_n)=\left\{ (n, 0, 0), (0, n, 0), (0, 0, n) \right\}$ of the discrete triangle $\Theta_n$; see \S \ref{subsec:the-discrete-triangle}.  

Define a function 
\begin{equation*}
	\vec{P} : (\Theta_n - \Gamma(\Theta_n))
	\times 
	(\Theta_n - \Gamma(\Theta_n))
	 \to \left\{-2, -1, 0, 1, 2\right\},
\end{equation*}
using the \textit{quiver} with vertex set $\Theta_n - \Gamma(\Theta_n)$ illustrated in Figure \ref{fig:Fg-quiver}.  The function $\vec{P}$ is defined by sending the ordered tuple $(v_1, v_2)$ of vertices of $\Theta_n - \Gamma(\Theta_n)$ to $2$ (resp. $-2$) if there is a solid arrow pointing from $v_1$ to $v_2$ (resp. $v_2$ to $v_1$), to $1$ (resp. $-1$) if there is a dotted arrow pointing from $v_1$ to $v_2$ (resp. $v_2$ to $v_1$), and to $0$ if there is no arrow connecting $v_1$ and $v_2$.  Note that internal arrows are solid, and boundary arrows are dotted.  By labeling the vertices of $\Theta_n - \Gamma(\Theta_n)$ by their coordinates $(a, b, c)$ we may think of the function $\vec{P}$ as a $N \times N$ anti-symmetric matrix $\vec{P} = (\vec{P}_{abc, a^\prime b^\prime c^\prime})$ called the \textit{Poisson matrix} associated to the quiver.  Here, $N = 3(n-1) + (n-1)(n-2)/2$; see \S \ref{ssec:dotted-ideal-triangulations}.  

\begin{definition}
The \textit{Fock--Goncharov quantum torus} 
	$\mathscr{T}_n^\omega(\mathfrak{T})$,  also denoted $\mathbb{C}[X_1^{\pm 1/n}, \dots, X_{N}^{\pm 1/n}]^\omega$,
associated to the triangle $\mathfrak{T}$ is defined to be the quantum torus $\mathscr{T}^\omega(\vec{P})$ determined by the $N \times N$ Poisson matrix $\vec{P}$, with generators $X_i^{\pm 1/n} = X_{abc}^{\pm 1/n}$ for all $(a, b, c) \in \Theta_n - \Gamma(\Theta_n)$.  Note that when $q=\omega=1$ this  recovers the classical polynomial algebra $\mathscr{T}_n^1(\mathfrak{T})$ for  $\mathfrak{T}$; see \S \ref{sec:Fock-Goncharov-coordinates-on-a-triangulated-surface}.  

As a notational convention, for $j = 1, 2, \dots, n-1$ we write $Z_{j}^{\pm 1/n}$ (resp. $Z_{j}^{\prime \pm 1/n}$ and $Z_j^{\prime\prime \pm 1/n}$) in place of $X_{j0(n-j)}^{\pm 1/n}$ (resp. $X_{j(n-j)0}^{\pm 1/n}$ and $X_{0j(n-j)}^{\pm 1/n}$); see Figure   \ref{fig:left-and-right-quantum-matrices}.  So, triangle-coordinates will be denoted $X_i = X_{abc}$ for $(a, b, c) \in \mathrm{Int}(\Theta_n)$ while edge-coordinates will be denoted $Z_j, Z^\prime_j, Z^{\prime\prime}_j$.  
\end{definition}

\begin{remark}
\label{rem:half-of-an-edge-coordinate}
Intuitively speaking, we think of the $Z$-coordinates as quantizations of `half' of their corresponding classical edge-coordinates.  This is because the `other half' of each coordinate lives in an adjacent triangle.  When viewed inside an ideal triangulation $\lambda$, an edge $E$ of $\lambda$ `splits' this classical edge-coordinate into its two `quantum halves'.    Compare Figure \ref{fig:edge-coordinates-as-tensor-products}.
\end{remark} 

\subsection{Quantum left and right matrices}
\label{ssec:quantum-left-and-right-matrices}

Although the  general (global) definition of the $\mathrm{SL}_n$-quantum trace polynomials (\S \ref{sec:def-of-quantum-trace}) is somewhat technical (requiring that one  keep track of the ordering of the non-commuting quantum torus variables), the extension of the local  monodromy matrices (\S \ref{sec:local-monodromy-matrices}) to the quantum setting is more straightforward, just  using the Weyl quantum ordering (\S \ref{sssec:weyl-ordering}) to symmetrize the variables.  

\subsubsection{Weyl quantum ordering for the Fock--Goncharov quantum torus}
\label{sssec:weyl-ordering-continued}
	
Let $\mathscr{T} =  \mathscr{T}_n^\omega(\mathfrak{T})$ be the Fock--Goncharov quantum torus (\S $\ref{sec:Fock-Goncharov-algebra-for-a-triangle})$.  Then the Weyl ordering $[-]$ of \S \ref{sssec:weyl-ordering} gives a map
\begin{equation*}
	[-] : 
	\mathrm{M}_n(\mathscr{T}_n^1(\mathfrak{T})) 
	 \to \mathrm{M}_n(\mathscr{T}_n^\omega(\mathfrak{T})),
\end{equation*}
where we have used the identification $\mathscr{T}_n^1(\mathfrak{T}) \cong \mathbb{C}[X_1^{\pm 1/n}, X_2^{\pm 1/n}, \dots, X_{N}^{\pm 1/n}]$ discussed in \S \ref{sec:Fock-Goncharov-algebra-for-a-triangle}.  

\begin{figure}[htb]
	\centering
	\includegraphics[scale=.78]{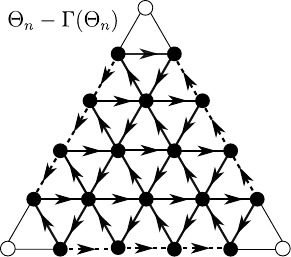}
	\caption{Quiver defining the Fock--Goncharov quantum torus $(n=5)$.}
	\label{fig:Fg-quiver}
\end{figure}	

\subsubsection{Quantum left and right matrices}
\label{sssec:quantum-left-and-right-matrices}

Let $\mathfrak{T}$ be a triangle.  An \textit{extended left-moving arc} $\overline{\gamma}$ is similar to a left-moving arc, from \S \ref{sec:local-monodromy-matrices}, except that it extends all the way to the two distinct edges of the triangle $\mathfrak{T}$; see Figure \ref{fig:left-and-right-quantum-matrices}.     We think of an extended left-moving arc $\overline{\gamma}$ as the concatenation of `half' of an edge-crossing arc $\overline{\gamma}^{1/2}_1$ together with a left-moving arc $\overline{\gamma}_2$ together with another half of an edge-crossing arc $\overline{\gamma}^{1/2}_3$, as indicated in Figure \ref{fig:left-and-right-quantum-matrices}; compare Remark \ref{rem:half-of-an-edge-coordinate}.  We refer to these halves of edge-crossing arcs as \textit{half-edge-crossing arcs}.  Similarly, we define \textit{extended right-moving arcs} $\overline{\gamma}$.  

Defined  as in \S \ref{sec:local-monodromy-matrices} are left matrices $\vec{M}^\mathrm{left}(\vec{X})$,  right matrices $\vec{M}^\mathrm{right}(\vec{X})$, and edge matrices $\vec{M}^\mathrm{edge}(\vec{Z})$ in $ \mathrm{SL}_n(\mathscr{T}_n^1(\mathfrak{T}))$ associated to non-extended left-moving arcs (Figure \ref{fig:left-matrix}), non-extended right-moving arcs (Figure \ref{fig:right-matrix}), and half-edge-crossing arcs, respectively.  

\begin{definition}
\label{def:left-and-right-quantum-matrices}
Put vectors $\vec{X}=(X_i)$, $\vec{Z}=(Z_j)$, $\vec{Z}^\prime=(Z^\prime_j)$, and $\vec{Z}^{\prime\prime}=(Z^{\prime\prime}_j)$ as in Figure \ref{fig:left-and-right-quantum-matrices}.
To an extended left-moving arc $\overline{\gamma}$, as in Figure \ref{fig:left-and-right-quantum-matrices},  we associate a \textit{quantum left matrix} $\vec{L}^\omega$ in $\mathrm{M}_n(\mathscr{T}_n^\omega(\mathfrak{T}))$ by the formula
\begin{equation*}
\label{eq:quantum-left-matrix}
	 \vec{L}^\omega 
	 \overset{\text{def}}{=} \vec{L}^\omega(\vec{Z}, \vec{X},  \vec{Z}^\prime)
	\overset{\text{def}}{=} [  \vec{M}^\mathrm{edge}(\vec{Z}) \vec{M}^\mathrm{left}(\vec{X}) \vec{M}^\mathrm{edge}(\vec{Z}^\prime) ]
	  \in  \mathrm{M}_n(\mathscr{T}_n^\omega(\mathfrak{T})),
\end{equation*}
where we have applied the Weyl quantum ordering $[-]$ discussed in \S \ref{sssec:weyl-ordering-continued} to the product  $\vec{M}^\mathrm{edge}(\vec{Z}) \vec{M}^\mathrm{left}(\vec{X}) \vec{M}^\mathrm{edge}(\vec{Z}^\prime)$ of classical matrices in $\mathrm{M}_n(\mathscr{T}_n^1(\mathfrak{T})) $  (actually, in $\mathrm{SL}_n(\mathscr{T}_n^1(\mathfrak{T}))$).  This just means that we apply the Weyl ordering to each entry of the classical matrix.
Similarly, to an extended right-moving arc $\overline{\gamma}$, as in Figure \ref{fig:left-and-right-quantum-matrices}, we associate a \textit{quantum right matrix} $\vec{R}^\omega$ in $\mathrm{M}_n(\mathscr{T}_n^\omega(\mathfrak{T}))$ by the formula
\begin{equation*}
\label{eq:quantum-right-matrix}
	 \vec{R}^\omega 
	 \overset{\text{def}}{=} \vec{R}^\omega(\vec{Z}, \vec{X}, \vec{Z}^{\prime\prime})
	\overset{\text{def}}{=}  [  \vec{M}^\mathrm{edge}(\vec{Z}) \vec{M}^\mathrm{right}(\vec{X}) \vec{M}^\mathrm{edge}(\vec{Z}^{\prime\prime}) ]
	  \in  \mathrm{M}_n(\mathscr{T}_n^\omega(\mathfrak{T})).  
\end{equation*}
\end{definition}

\begin{figure}[htb]
	\centering
	\includegraphics[scale=.5]{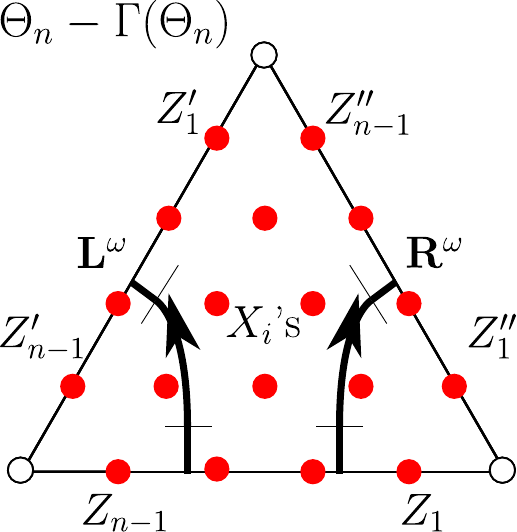}
	\caption{Quantum left and right matrices $(n=5)$.}
	\label{fig:left-and-right-quantum-matrices}
\end{figure}

\subsection{Quantum \texorpdfstring{$\mathrm{SL}_n$}{SLn} and its points: first result}
\label{sec:quantum-SLn}
	
(For a more theoretical discussion about quantum $\mathrm{SL}_n$, see Appendix \ref{sec:proof-of-reshetikhin-turaev-invariant}.)  Let $\mathscr{T}$ be a, possibly non-commutative, algebra.  

\begin{definition}
A $2 \times 2$ matrix $\vec{M} = \left(\begin{smallmatrix} a & b \\ c & d \end{smallmatrix}\right)$ in $\mathrm{M}_2(\mathscr{T})$ is a \textit{$\mathscr{T}$-point of the quantum matrix algebra $\mathrm{M}_2^q$}, denoted $\vec{M} \in \mathrm{M}_2^q(\mathscr{T}) \subset \mathrm{M}_2(\mathscr{T})$, if 
\begin{equation*}
\tag{$\ast$}
\label{eq:quantum-group-2by2-relations}
	ba = q ab, 
	\,\,  dc = qcd, 
	\,\,  ca = qac, 
	\,\,  db = qbd, 
	\,\,  bc = cb,
	\,\,  da - ad = (q - q^{-1}) bc
	  \in  \mathscr{T}.  
\end{equation*}
A matrix $\vec{M} \in \mathrm{M}_2(\mathscr{T})$ is a \textit{$\mathscr{T}$-point of the quantum special linear group $\mathrm{SL}_2^q$}, denoted $\vec{M} \in \mathrm{SL}_2^q(\mathscr{T}) \subset \mathrm{M}_2^q(\mathscr{T}) \subset \mathrm{M}_2(\mathscr{T})$, if 
	$\vec{M} \in \mathrm{M}_2^q(\mathscr{T})$
and the \textit{quantum determinant}
\begin{equation*}
	\mathrm{Det}^q(\vec{M})   \overset{\text{def}}{=} ad - q^{-1}bc  = 1
	  \in \mathscr{T}.
\end{equation*}
\end{definition}

These notions are also defined for $n \times n$ matrices, as follows.  

\begin{definition}
A matrix $\vec{M} \in \mathrm{M}_n(\mathscr{T})$ is a \textit{$\mathscr{T}$-point of the quantum matrix algebra $\mathrm{M}_n^q$}, denoted $\vec{M} \in \mathrm{M}_n^q(\mathscr{T}) \subset \mathrm{M}_n(\mathscr{T})$, if every $2 \times 2$ submatrix $\left( \begin{smallmatrix} \vec{M}_{ik}&\vec{M}_{im}\\\vec{M}_{jk}&\vec{M}_{jm} \end{smallmatrix} \right)$ of $\vec{M}$ is a $\mathscr{T}$-point of $\mathrm{M}_2^q$.  That is,
\begin{gather*}		\label{defining-relations-of-quantum-coord-ring}
	\textbf{M}_{im} \textbf{M}_{ik} = q \textbf{M}_{ik} \textbf{M}_{im}, \,\,
	\vec{M}_{jm} \vec{M}_{jk}=q\vec{M}_{jk} \vec{M}_{jm},	\,\,
	\vec{M}_{jk}\vec{M}_{ik} = q \vec{M}_{ik} \vec{M}_{jk},	
\,\,  	\textbf{M}_{jm} \textbf{M}_{im} = q \textbf{M}_{im} \textbf{M}_{jm},  	\\	
	\textbf{M}_{im} \textbf{M}_{jk} = \textbf{M}_{jk} \textbf{M}_{im}, \,\,
	\textbf{M}_{jm} \textbf{M}_{ik} - \textbf{M}_{ik} \textbf{M}_{jm} = (q - q^{-1}) \textbf{M}_{im} \textbf{M}_{jk},     
\end{gather*}	
for all $i < j$ and $k < m$, where $1 \leq i, j, k, m \leq n$.  
A matrix $\vec{M} \in \mathrm{M}_n(\mathscr{T})$ is a \textit{$\mathscr{T}$-point of the quantum special linear group $\mathrm{SL}_n^q$}, denoted $\vec{M} \in \mathrm{SL}_n^q(\mathscr{T}) \subset \mathrm{M}_n^q(\mathscr{T}) \subset \mathrm{M}_n(\mathscr{T})$, if both $\vec{M} \in \mathrm{M}_n^q(\mathscr{T})$ and  $\mathrm{Det}^q(\vec{M}) = 1$.  
Here, the \textit{quantum determinant} $\mathrm{Det}^q(\vec{M}) \in \mathscr{T}$ of a matrix $\vec{M} \in \mathrm{M}_n(\mathscr{T})$ is
\begin{equation*}
	\mathrm{Det}^q(\vec{M})\overset{\text{def}}{=}\sum_{\sigma\in S_n} (-q^{-1})^{\ell(\sigma)} \vec{M}_{1\sigma(1)} \vec{M}_{2\sigma(2)} \cdots \vec{M}_{n\sigma(n)}\in\mathscr{T},     
\end{equation*}
where the length $\ell(\sigma)$ of the permutation $\sigma\in S_n$ is the minimum number of factors appearing in a decomposition of $\sigma$ as a product of adjacent transpositions $(i, i+1)$; see, for example, \cite[Chapter I.2]{Brown02}.
\end{definition}

\begin{remark}
Note that the definitions satisfy the property that if a $\mathscr{T}$-point $\vec{M} \in \mathrm{M}_n^q(\mathscr{T}) \subset \mathrm{M}_n(\mathscr{T})$ is a triangular matrix, then the diagonal entries $\vec{M}_{ii} \in \mathscr{T}$ commute, and $\mathrm{Det}^q(\vec{M}) = \prod_i \vec{M}_{ii} \in \mathscr{T}$.

	 Note also that the subsets $\mathrm{M}_n^q(\mathscr{T}) \subset \mathrm{M}_n(\mathscr{T})$ and $\mathrm{SL}_n^q(\mathscr{T}) \subset \mathrm{M}_n(\mathscr{T})$ are generally not closed under matrix multiplication.  
\end{remark}

Take $\mathscr{T} = \mathscr{T}_n^\omega(\mathfrak{T})$ to be the Fock--Goncharov quantum torus for the triangle $\mathfrak{T}$, as defined in \S \ref{sec:Fock-Goncharov-algebra-for-a-triangle}.  Let $\vec{L}^\omega$ and $\vec{R}^\omega$ in $\mathrm{M}_n(\mathscr{T}_n^\omega(\mathfrak{T}))$ be the quantum left and right matrices, respectively, as defined in Definition \ref{def:left-and-right-quantum-matrices}.  In a companion paper, we prove:

\begin{theorem}[\cite{DouglasA}, $\mathrm{SL}_n$-quantum matrices]
\label{thm:first-theorem}
	The quantum left and right  matrices,
\begin{equation*}
		\vec{L}^\omega 
	 = \vec{L}^\omega(\vec{Z}, \vec{X},  \vec{Z}^\prime) \,\, \text{and } \,\,  \vec{R}^\omega 
	 = \vec{R}^\omega(\vec{Z}, \vec{X}, \vec{Z}^{\prime\prime})
		  \in 
  \mathrm{M}_n(\mathscr{T}_n^\omega(\mathfrak{T})),
\end{equation*}
	 are $\mathscr{T}_n^\omega(\mathfrak{T})$-points of the quantum special linear group $\mathrm{SL}_n^q$.  That is, $\vec{L}^\omega, \vec{R}^\omega \in \mathrm{SL}_n^q(\mathscr{T}_n^\omega(\mathfrak{T})) 
		  \subset  \mathrm{M}_n(\mathscr{T}_n^\omega(\mathfrak{T}))$.  \qed
\end{theorem}	

 The proof uses a quantum version of  Fock--Goncharov snakes; see Remark \ref{rem:classical-coordinates-snakes}.  See also Remark \ref{rem:chekhov-shapiro} for recent related work.  

\subsection{Examples}
\label{sec:concrete-formulas}

\subsubsection{$\mathrm{SL}_3$ example.}
\label{sssec:n=3-example}

Consider the case $n=3$; see Figure \ref{fig:n=3-left-and-right-matrices-example}.  On the right hand side, we show the quiver defining the commutation relations in the quantum torus $\mathscr{T}_3^\omega(\mathfrak{T})$, recalling Figure \ref{fig:Fg-quiver} and the definitions of \S \ref{sssec:quantum-tori} and \ref{sec:Fock-Goncharov-algebra-for-a-triangle}.  For instance, the following are some sample commutation relations:
\begin{equation*}
	X Z^\prime = q^2 Z^\prime X,
	\,\,
	X W^\prime = q^{-2} W^\prime X,
	\,\,
	Z W = q W Z,
	\,\,
	Z W^\prime = q^2 W^\prime Z.
\end{equation*}
Then, the quantum left and right matrices are computed as
\begin{gather*}
	\vec{L}^\omega = 
	\left[
W^{-\frac{1}{3}} Z^{-\frac{2}{3}} 
\left(\begin{smallmatrix}
WZ&&\\
&Z&\\
&&1
\end{smallmatrix}\right)
\left(\begin{smallmatrix}
1&1&\\
&1&\\
&&1
\end{smallmatrix}\right)
X^{-\frac{1}{3}}
\left(\begin{smallmatrix}
X&&\\
&1&1\\
&&1
\end{smallmatrix}\right)
\left(\begin{smallmatrix}
1&1&\\
&1&\\
&&1
\end{smallmatrix}\right)
Z^{\prime -\frac{1}{3}} W^{\prime -\frac{2}{3}}	
\left(\begin{smallmatrix}
Z^\prime W^\prime&&\\
&W^\prime&\\
&&1
\end{smallmatrix}\right)
	\right]
\\= 
	\left(\begin{smallmatrix}
\left[W^\frac{2}{3} Z^\frac{1}{3} X^\frac{2}{3} Z^{\prime \frac{2}{3}} W^{\prime \frac{1}{3}}\right]
&\left[W^\frac{2}{3} Z^\frac{1}{3} X^\frac{2}{3} Z^{\prime -\frac{1}{3}} W^{\prime \frac{1}{3}}\right]+\left[ W^\frac{2}{3} Z^\frac{1}{3} X^{-\frac{1}{3}} Z^{\prime -\frac{1}{3}} W^{\prime \frac{1}{3}}\right]
&\left[W^\frac{2}{3} Z^\frac{1}{3} X^{-\frac{1}{3}} Z^{\prime -\frac{1}{3}} W^{\prime -\frac{2}{3}}\right]
\\0
&\left[W^{-\frac{1}{3}} Z^\frac{1}{3} X^{-\frac{1}{3}} Z^{\prime -\frac{1}{3}} W^{\prime \frac{1}{3}}\right]
&\left[W^{-\frac{1}{3}} Z^\frac{1}{3} X^{-\frac{1}{3}} Z^{\prime -\frac{1}{3}} W^{\prime -\frac{2}{3}}\right]
\\0
&0
&\left[W^{-\frac{1}{3}} Z^{-\frac{2}{3}} X^{-\frac{1}{3}} Z^{\prime -\frac{1}{3}} W^{\prime -\frac{2}{3}}\right]
	\end{smallmatrix}\right),
\end{gather*}
and 
\begin{gather*}
\label{eq:example-4by4-right-matrixSL3}
	\vec{R}^\omega = 
	\left[
W^{\prime -\frac{1}{3}} Z^{\prime -\frac{2}{3}} 
\left(\begin{smallmatrix}
W^\prime Z^\prime&&\\
&Z^\prime&\\
&&1
\end{smallmatrix}\right)
\left(\begin{smallmatrix}
1&&\\
&1&\\
&1&1
\end{smallmatrix}\right)
X^{+\frac{1}{3}}
\left(\begin{smallmatrix}
1&&\\
1&1&\\
&&X^{-1}
\end{smallmatrix}\right)
\left(\begin{smallmatrix}
1&&\\
&1&\\
&1&1
\end{smallmatrix}\right)
Z^{ -\frac{1}{3}} W^{ -\frac{2}{3}}	
\left(\begin{smallmatrix}
Z W&&\\
&W&\\
&&1
\end{smallmatrix}\right)
	\right]
\\
=
	\left(\begin{smallmatrix}
\left[ W^{\prime \frac{2}{3}} Z^{\prime \frac{1}{3}} X^\frac{1}{3} Z^\frac{2}{3} W^\frac{1}{3}\right]
&0
&0
\\ \left[ W^{\prime -\frac{1}{3}} Z^{\prime \frac{1}{3}} X^\frac{1}{3} Z^\frac{2}{3} W^\frac{1}{3}\right]
&\left[ W^{\prime -\frac{1}{3}} Z^{\prime \frac{1}{3}} X^\frac{1}{3} Z^{-\frac{1}{3}} W^\frac{1}{3}\right]
&0
\\ \left[ W^{\prime -\frac{1}{3}} Z^{\prime -\frac{2}{3}} X^\frac{1}{3} Z^\frac{2}{3} W^\frac{1}{3}\right]
&\left[ W^{\prime -\frac{1}{3}} Z^{\prime -\frac{2}{3}} X^\frac{1}{3} Z^{-\frac{1}{3}} W^\frac{1}{3}\right] + \left[ W^{\prime -\frac{1}{3}} Z^{\prime -\frac{2}{3}} X^{-\frac{2}{3}} Z^{-\frac{1}{3}} W^\frac{1}{3}\right]
&\left[ W^{\prime -\frac{1}{3}} Z^{\prime -\frac{2}{3}} X^{-\frac{2}{3}} Z^{-\frac{1}{3}} W^{-\frac{2}{3}} \right]
\end{smallmatrix}\right).  
\end{gather*}
Theorem \ref{thm:first-theorem} says that these two matrices are elements of $\mathrm{SL}_3^q(\mathscr{T}_3^\omega(\mathfrak{T}))$.  For instance, the entries $a,b,c,d$ of the $2 \times 2$ sub-matrix of $\vec{L}^\omega$,
\begin{equation*}
\label{eq:quantum-left-matrix-2x2-sub-matrixSL3}
\begin{split}
	\left(\begin{smallmatrix}
		a&b\\
		c&d
	\end{smallmatrix}\right)
	=
	\left(\begin{smallmatrix}
		\vec{L}^\omega_{12}&\vec{L}^\omega_{13}\\
		\vec{L}^\omega_{22}&\vec{L}^\omega_{23}
	\end{smallmatrix}\right)
	=
	\left(\begin{smallmatrix}
\left[W^\frac{2}{3} Z^\frac{1}{3} X^\frac{2}{3} Z^{\prime -\frac{1}{3}} W^{\prime \frac{1}{3}}\right]+\left[ W^\frac{2}{3} Z^\frac{1}{3} X^{-\frac{1}{3}} Z^{\prime -\frac{1}{3}} W^{\prime \frac{1}{3}}\right]
&
\left[W^\frac{2}{3} Z^\frac{1}{3} X^{-\frac{1}{3}} Z^{\prime -\frac{1}{3}} W^{\prime -\frac{2}{3}}\right]
\\
\left[W^{-\frac{1}{3}} Z^\frac{1}{3} X^{-\frac{1}{3}} Z^{\prime -\frac{1}{3}} W^{\prime \frac{1}{3}}\right]
&
	\left[W^{-\frac{1}{3}} Z^\frac{1}{3} X^{-\frac{1}{3}} Z^{\prime -\frac{1}{3}} W^{\prime -\frac{2}{3}}\right]
	\end{smallmatrix}\right),
\end{split}
\end{equation*}
satisfy Equation \eqref{eq:quantum-group-2by2-relations}.  For a computer verification of this, see Appendix \ref{sec:the-appendix}. We also demonstrate in the appendix that Equation \eqref{eq:quantum-group-2by2-relations} is satisfied by the entries $a,b,c,d$ of the $2 \times 2$ sub-matrix  of $\vec{R}^\omega$,
\begin{equation*}
\label{eq:quantum-right-matrix-2x2-sub-matrixSL3}
\begin{split}
	\left(\begin{smallmatrix}
		a&b\\
		c&d
	\end{smallmatrix}\right)
	\!=\!
	\left(\begin{smallmatrix}
		\vec{R}^\omega_{21}&\vec{R}^\omega_{22}\\
		\vec{R}^\omega_{31}&\vec{R}^\omega_{32}
	\end{smallmatrix}\right)
	\!=\!
	\left(\begin{smallmatrix}
\left[ W^{\prime -\frac{1}{3}} Z^{\prime \frac{1}{3}} X^\frac{1}{3} Z^\frac{2}{3} W^\frac{1}{3}\right]
&
\left[ W^{\prime -\frac{1}{3}} Z^{\prime \frac{1}{3}} X^\frac{1}{3} Z^{-\frac{1}{3}} W^\frac{1}{3}\right]
\\
\left[ W^{\prime -\frac{1}{3}} Z^{\prime -\frac{2}{3}} X^\frac{1}{3} Z^\frac{2}{3} W^\frac{1}{3}\right]
&
\left[ W^{\prime -\frac{1}{3}} Z^{\prime -\frac{2}{3}} X^\frac{1}{3} Z^{-\frac{1}{3}} W^\frac{1}{3}\right] + \left[ W^{\prime -\frac{1}{3}} Z^{\prime -\frac{2}{3}} X^{-\frac{2}{3}} Z^{-\frac{1}{3}} W^\frac{1}{3}\right]
	\end{smallmatrix}\right).
\end{split}
\end{equation*}

\begin{figure}[htb]
     \centering
     \begin{subfigure}[b]{0.465\textwidth}
         \centering
         \includegraphics[width=.49\textwidth]{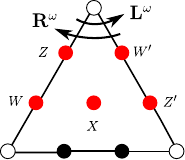}
         \caption{Quantum left and right matrices}
         \label{fig:n=3-left-and-right-matrices-example-subfigure}
     \end{subfigure}     
\hfill
     \begin{subfigure}[b]{0.465\textwidth}
         \centering
         \includegraphics[width=.49\textwidth]{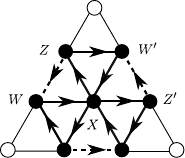}
         \caption{Fock--Goncharov quiver}
         \label{fig:n=3-quiver}
     \end{subfigure}
     	\caption{Example in the case $n=3$.}
        \label{fig:n=3-left-and-right-matrices-example}
\end{figure}

\subsubsection{$\mathrm{SL}_4$ example.}
\label{sssec:n=4-example}

Consider the case $n=4$; see Figure \ref{fig:n=4-left-and-right-matrices-example}.  On the right hand side, we show the quiver defining the commutation relations in the quantum torus $\mathscr{T}_4^\omega(\mathfrak{T})$, recalling Figure \ref{fig:Fg-quiver} and the definitions of \S \ref{sssec:quantum-tori} and \ref{sec:Fock-Goncharov-algebra-for-a-triangle}.  For instance, the following are some sample commutation relations:
\begin{equation*}
	X_3 Z^{\prime\prime}_2 = q^2 X_3 Z^{\prime\prime}_2,
	\,\,
	X_3 X_1 = q^{-2} X_1 X_3,
	\,\,
	Z_3 Z_2 = q Z_2 Z_3,
	\,\,
	Z_3 Z^\prime_3 = q^2 Z^\prime_3 Z_3.
\end{equation*}
Then, the quantum left and right matrices are computed as
\begin{equation*}
\begin{split}
	\vec{L}^\omega = 
	&\Bigg[
Z_1^{-\frac{1}{4}} Z_2^{-\frac{2}{4}} Z_3^{-\frac{3}{4}}	
\left(\begin{smallmatrix}
Z_1 Z_2 Z_3&&&\\
&Z_2 Z_3&&\\
&&Z_3&\\
&&&1
\end{smallmatrix}\right)
\left(\begin{smallmatrix}
1&1&&\\
&1&&\\
&&1&\\
&&&1
\end{smallmatrix}\right)
X_1^{-\frac{1}{4}}
\left(\begin{smallmatrix}
X_1&&&\\
&1&1&\\
&&1&\\
&&&1
\end{smallmatrix}\right)
X_2^{-\frac{2}{4}}
\left(\begin{smallmatrix}
X_2&&&\\
&X_2&&\\
&&1&1\\
&&&1
\end{smallmatrix}\right)
\\
&\left(\begin{smallmatrix}
1&1&&\\
&1&&\\
&&1&\\
&&&1
\end{smallmatrix}\right)
X_3^{-\frac{1}{4}}
\left(\begin{smallmatrix}
X_3&&&\\
&1&1&\\
&&1&\\
&&&1
\end{smallmatrix}\right)
\left(\begin{smallmatrix}
1&1&&\\
&1&&\\
&&1&\\
&&&1
\end{smallmatrix}\right)
Z_1^{\prime-\frac{1}{4}} Z_2^{\prime-\frac{2}{4}} Z_3^{\prime-\frac{3}{4}}	
\left(\begin{smallmatrix}
Z^\prime_1 Z^\prime_2 Z^\prime_3&&&\\
&Z^\prime_2 Z^\prime_3&&\\
&&Z^\prime_3&\\
&&&1
\end{smallmatrix}\right)
	\Bigg],
\end{split}
\end{equation*}
and
\begin{equation*}
\label{eq:example-4by4-right-matrix}
\begin{split}
	\vec{R}^\omega = 
	&\Bigg[
Z_1^{-\frac{1}{4}} Z_2^{-\frac{2}{4}} Z_3^{-\frac{3}{4}}	
\left(\begin{smallmatrix}
Z_1 Z_2 Z_3&&&\\
&Z_2 Z_3&&\\
&&Z_3&\\
&&&1
\end{smallmatrix}\right)
\left(\begin{smallmatrix}
1&&&\\
&1&&\\
&&1&\\
&&1&1
\end{smallmatrix}\right)
X_2^{+\frac{1}{4}}
\left(\begin{smallmatrix}
1&&&\\
&1&&\\
&1&1&\\
&&&X_2^{-1}
\end{smallmatrix}\right)
X_1^{+\frac{2}{4}}
\left(\begin{smallmatrix}
1&&&\\
1&1&&\\
&&X_1^{-1}&\\
&&&X_1^{-1}
\end{smallmatrix}\right)
\\
&\left(\begin{smallmatrix}
1&&&\\
&1&&\\
&&1&\\
&&1&1
\end{smallmatrix}\right)
X_3^{+\frac{1}{4}}
\left(\begin{smallmatrix}
1&&&\\
&1&&\\
&1&1&\\
&&&X_3^{-1}
\end{smallmatrix}\right)
\left(\begin{smallmatrix}
1&&&\\
&1&&\\
&&1&\\
&&1&1
\end{smallmatrix}\right)
Z_1^{\prime\prime-\frac{1}{4}} Z_2^{\prime\prime-\frac{2}{4}} Z_3^{\prime\prime-\frac{3}{4}}	
\left(\begin{smallmatrix}
Z^{\prime\prime}_1 Z^{\prime\prime}_2 Z_3^{\prime\prime}&&&\\
&Z^{\prime\prime}_2 Z^{\prime\prime}_3&&\\
&&Z^{\prime\prime}_3&\\
&&&1
\end{smallmatrix}\right)
	\Bigg].
\end{split}
\end{equation*}
Theorem \ref{thm:first-theorem} says that these two matrices are elements of $\mathrm{SL}_4^q(\mathscr{T}_4^\omega(\mathfrak{T}))$.  For instance, the entries $a,b,c,d$ of the $2 \times 2$ sub-matrix (arranged as a $4 \times 1$ matrix) of $\vec{L}^\omega$,  $	\left(\begin{smallmatrix}
		a\\b\\
		c\\d
	\end{smallmatrix}\right)
	=
	\left(\begin{smallmatrix}
		\vec{L}^\omega_{13}\\\vec{L}^\omega_{14}\\
		\vec{L}^\omega_{23}\\\vec{L}^\omega_{24}
	\end{smallmatrix}\right)=$
\begin{equation*}
\label{eq:quantum-left-matrix-2x2-sub-matrix}
\begin{split}
	\left(\begin{smallmatrix}
		[Z_3^\frac{1}{4} Z_2^\frac{2}{4} Z_1^\frac{3}{4} 
		Z_3^{\prime\frac{1}{4}} Z_2^{\prime-\frac{2}{4}} Z_1^{\prime-\frac{1}{4}} 
		X_1^{-\frac{1}{4}} X_2^{-\frac{2}{4}} X_3^{-\frac{1}{4}}]
	+	
		[Z_3^\frac{1}{4} Z_2^\frac{2}{4} Z_1^\frac{3}{4} 
		Z_3^{\prime\frac{1}{4}} Z_2^{\prime-\frac{2}{4}} Z_1^{\prime-\frac{1}{4}} 
		X_1^{-\frac{1}{4}} X_2^\frac{2}{4} X_3^{-\frac{1}{4}}]
	+	
		[Z_3^\frac{1}{4} Z_2^\frac{2}{4} Z_1^\frac{3}{4} 
		Z_3^{\prime\frac{1}{4}} Z_2^{\prime-\frac{2}{4}} Z_1^{\prime-\frac{1}{4}} 
		X_1^\frac{3}{4} X_2^\frac{2}{4} X_3^{-\frac{1}{4}}]
\\
		[Z_3^\frac{1}{4} Z_2^\frac{2}{4} Z_1^\frac{3}{4} 
		Z_3^{\prime-\frac{3}{4}} Z_2^{\prime-\frac{2}{4}} Z_1^{\prime-\frac{1}{4}} 
		X_1^{-\frac{1}{4}} X_2^{-\frac{2}{4}} X_3^{-\frac{1}{4}}]
\\	
		[Z_3^\frac{1}{4} Z_2^\frac{2}{4} Z_1^{-\frac{1}{4}} 
		Z_3^{\prime\frac{1}{4}} Z_2^{\prime-\frac{2}{4}} Z_1^{\prime-\frac{1}{4}} 
		X_1^{-\frac{1}{4}} X_2^{-\frac{2}{4}} X_3^{-\frac{1}{4}}]	
	+	
		[Z_3^\frac{1}{4} Z_2^\frac{2}{4} Z_1^{-\frac{1}{4}} 
		Z_3^{\prime\frac{1}{4}} Z_2^{\prime-\frac{2}{4}} Z_1^{\prime-\frac{1}{4}} 
		X_1^{-\frac{1}{4}} X_2^\frac{2}{4} X_3^{-\frac{1}{4}}]
\\
		[Z_3^\frac{1}{4} Z_2^\frac{2}{4} Z_1^{-\frac{1}{4}} 
		Z_3^{\prime-\frac{3}{4}} Z_2^{\prime-\frac{2}{4}} Z_1^{\prime-\frac{1}{4}} 
		X_1^{-\frac{1}{4}} X_2^{-\frac{2}{4}} X_3^{-\frac{1}{4}}]	
	\end{smallmatrix}\right),
\end{split}
\end{equation*}
satisfy Equation \eqref{eq:quantum-group-2by2-relations}.  For a computer verification of this, see Appendix \ref{sec:the-appendix}.  We also demonstrate in the appendix that Equation \eqref{eq:quantum-group-2by2-relations} is satisfied by the entries $a,b,c,d$ of the $2 \times 2$ sub-matrix (arranged as a $4 \times 1$ matrix) of $\vec{R}^\omega$, 
$  	\left(\begin{smallmatrix}
		a\\b\\
		c\\d
	\end{smallmatrix}\right)
	\!=\!
	\left(\begin{smallmatrix}
		\vec{R}^\omega_{31}\\\vec{R}^\omega_{32}\\
		\vec{R}^\omega_{41}\\\vec{R}^\omega_{42}
	\end{smallmatrix}\right)
	\!=\!$
\begin{equation*}
\label{eq:quantum-right-matrix-2x2-sub-matrix}
\hspace{-1in}
\begin{split}
	\left(\begin{smallmatrix}
		[Z_3^\frac{1}{4} Z_2^{-\frac{1}{2}}  Z_1^{-\frac{1}{4}}  X_2^\frac{1}{4}  X_1^{\frac{1}{2}}  X_3^{\frac{1}{4}}  Z_3^{\prime\prime \frac{1}{4}} 
		Z_2^{\prime\prime \frac{1}{2}}   Z_1^{\prime\prime \frac{3}{4}}
			]
	\\
		[Z_3^\frac{1}{4} Z_2^{-\frac{1}{2}}  Z_1^{-\frac{1}{4}}  X_2^\frac{1}{4}  X_1^{-\frac{1}{2}}  X_3^{\frac{1}{4}}  Z_3^{\prime\prime \frac{1}{4}}  
		Z_2^{\prime\prime \frac{1}{2}}  Z_1^{\prime\prime -\frac{1}{4}}
			]	
	+	
		[Z_3^\frac{1}{4} Z_2^{-\frac{1}{2}}  Z_1^{-\frac{1}{4}}  X_2^\frac{1}{4}  X_1^{\frac{1}{2}}  X_3^{\frac{1}{4}}  Z_3^{\prime\prime \frac{1}{4}}  
		Z_2^{\prime\prime \frac{1}{2}}  Z_1^{\prime\prime -\frac{1}{4}}
			]
	\\
		[Z_3^{-\frac{3}{4}} Z_2^{-\frac{1}{2}}  Z_1^{-\frac{1}{4}}  X_2^\frac{1}{4}  X_1^{\frac{1}{2}}  X_3^{\frac{1}{4}}  Z_3^{\prime\prime \frac{1}{4}}  
		Z_2^{\prime\prime \frac{1}{2}}  Z_1^{\prime\prime \frac{3}{4}}
			]
	\\		
			[Z_3^{-\frac{3}{4}} Z_2^{-\frac{1}{2}}  Z_1^{-\frac{1}{4}}  X_2^{-\frac{3}{4}}  X_1^{-\frac{1}{2}}  X_3^{\frac{1}{4}}  Z_3^{\prime\prime \frac{1}{4}}  
		Z_2^{\prime\prime \frac{1}{2}}  Z_1^{\prime\prime -\frac{1}{4}}
				]
		+	
			[Z_3^{-\frac{3}{4}} Z_2^{-\frac{1}{2}}  Z_1^{-\frac{1}{4}}  X_2^\frac{1}{4}  X_1^{-\frac{1}{2}}  X_3^{\frac{1}{4}}  Z_3^{\prime\prime \frac{1}{4}}  
		Z_2^{\prime\prime \frac{1}{2}}  Z_1^{\prime\prime -\frac{1}{4}}
				]+
			[Z_3^{-\frac{3}{4}} Z_2^{-\frac{1}{2}}  Z_1^{-\frac{1}{4}}  X_2^\frac{1}{4}  X_1^{\frac{1}{2}}  X_3^{\frac{1}{4}}  Z_3^{\prime\prime \frac{1}{4}}  
		Z_2^{\prime\prime \frac{1}{2}}  Z_1^{\prime\prime -\frac{1}{4}}
				]
	\end{smallmatrix}\right).
\end{split}
\end{equation*}

\begin{figure}[htb]
     \centering
     \begin{subfigure}[b]{0.465\textwidth}
         \centering
         \includegraphics[width=.49\textwidth]{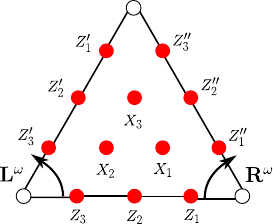}
         \caption{Quantum left and right matrices}
         \label{fig:n=4-left-and-right-matrices-example-subfigure}
     \end{subfigure}     
\hfill
     \begin{subfigure}[b]{0.49\textwidth}
         \centering
         \includegraphics[width=.44\textwidth]{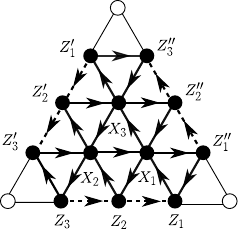}
         \caption{Fock--Goncharov quiver}
         \label{fig:n=4-quiver}
     \end{subfigure}
     	\caption{Example in the case $n=4$.}
        \label{fig:n=4-left-and-right-matrices-example}
\end{figure}

\subsection{Quantum tori for surfaces}
\label{ssec:quantum-tori-for-surfaces}
	
For  a dotted ideal triangulation $\lambda$ of $\mathfrak{S}$, in \S \ref{sec:Fock-Goncharov-coordinates-on-a-triangulated-surface} we defined the classical polynomial algebra $\mathscr{T}_n^1(\lambda) =
\mathbb{C}[X_1^{\pm 1/n}, X_2^{\pm 1/n}, \dots, X_N^{\pm 1/n}]$ where there is one generator $X_i^{\pm 1/n}$ associated to every dot on  $\lambda$.  In the case where $\mathfrak{S}=\mathfrak{T}$ is an ideal triangle, in \S \ref{sec:Fock-Goncharov-algebra-for-a-triangle} we deformed the classical polynomial algebra $\mathscr{T}_n^1(\mathfrak{T})$ to a quantum torus $\mathscr{T}_n^\omega(\mathfrak{T})$.  We now generalize the quantum torus $\mathscr{T}_n^\omega(\mathfrak{T})$ to a quantum torus  $\mathscr{T}_n^\omega(\lambda)$ associated to the triangulated  surface $(\mathfrak{S}, \lambda)$ which deforms the classical polynomial algebra $\mathscr{T}_n^1(\lambda)$.  

For each dotted triangle $\mathfrak{T}$ of $\lambda$, associate a copy $\widehat{\mathfrak{T}}$ of $\mathfrak{T}$, which is also a dotted triangle, such as that shown in Figure \ref{fig:dotted-triangle}.  Note that the boundary $\partial \widehat{\mathfrak{T}}$ consists of three ideal edges.  The dotted ideal triangulation $\lambda$ can be reconstructed from the individual triangles $\widehat{\mathfrak{T}}$ by supplying additional gluing data.  To each dotted triangle $\widehat{\mathfrak{T}}$ associate the Fock--Goncharov quantum torus $\mathscr{T}_n^\omega(\widehat{\mathfrak{T}})$ of the triangle $\widehat{\mathfrak{T}}$, whose coordinates we will denote by $\widehat{X}$.  
Recall that a generator $\widehat{X}_{abc}^{ \pm 1/n}$ of the quantum torus $\mathscr{T}_n^\omega(\widehat{\mathfrak{T}})$ is either a triangle-generator or an edge-generator.  If $\widehat{X}_{abc}^{ \pm 1/n}$ is an edge-generator, then there are two cases:
\begin{itemize}
	\item  the corresponding generator $X_i^{\pm 1/n}$ in the classical polynomial algebra $\mathscr{T}_n^1(\lambda)$ for the glued surface $(\mathfrak{S}, \lambda)$ is a boundary-generator;
	\item  the corresponding  generator $X_i^{\pm 1/n}$ in $\mathscr{T}_n^1(\lambda)$ is an interior-generator.  
\end{itemize}
In the second case, the corresponding interior-generator $X_i^{\pm 1/n}$ in $\mathscr{T}_n^1(\lambda)$ lies on an internal edge $E$ of the ideal triangulation $\lambda$.  So, there exists a triangle $\mathfrak{T}^\prime$ adjacent to $\mathfrak{T}$ along the edge $E$.  Moreover, there exists a unique edge-generator $\widehat{X}_{a^\prime b^\prime c^\prime}^{\prime \pm 1/n}$ in the quantum torus $\mathscr{T}_n^\omega(\widehat{\mathfrak{T}}^\prime)$ for the triangle $\widehat{\mathfrak{T}}^\prime$ that also corresponds to the  interior-generator $X_i^{\pm 1/n}$ in $\mathscr{T}_n^1(\lambda)$ lying on the internal edge $E$.  Therefore, we may say that the two quantum generators $\widehat{X}_{abc}^{ \pm 1/n}$ in $\mathscr{T}_n^\omega(\widehat{\mathfrak{T}})$ and $\widehat{X}_{a^\prime b^\prime c^\prime}^{\prime \pm 1/n}$ in  $\mathscr{T}_n^\omega(\widehat{\mathfrak{T}}^\prime)$ \textit{correspond to one another}; see Figure \ref{fig:edge-coordinates-as-tensor-products}.  
	
\begin{definition}
\label{def:FG-algebra-of-the-whole-surface}
	The \textit{Fock--Goncharov quantum torus} $\mathscr{T}_n^\omega(\lambda)$ associated to the surface $\mathfrak{S}$ equipped with the dotted ideal triangulation $\lambda$ is the sub-algebra,
	\begin{equation*}
		\mathscr{T}_n^\omega(\lambda)
		\subset
		\bigotimes_{\text{copies } \widehat{\mathfrak{T}} \text{ of triangles } \mathfrak{T} \text{ of }\lambda} \mathscr{T}_n^\omega(\widehat{\mathfrak{T}}),
	\end{equation*}
	of the tensor product of the Fock--Goncharov quantum tori $\mathscr{T}_n^\omega(\widehat{\mathfrak{T}})$ associated to the copies $\widehat{\mathfrak{T}}$ of the dotted triangles $\mathfrak{T}$ of the ideal triangulation $\lambda$, generated:
	\begin{itemize}
		\item  by triangle-generators $\widehat{X}_{abc}^{\pm 1/n}$ in $\mathscr{T}_n^\omega(\widehat{\mathfrak{T}})$; 
		\item  by tensor products $\widehat{X}_{abc}^{\pm 1/n} \otimes \widehat{X}_{a^\prime b^\prime c^\prime}^{\prime \pm 1/n}$ in $\mathscr{T}_n^\omega(\widehat{\mathfrak{T}}) \otimes \mathscr{T}_n^\omega(\widehat{\mathfrak{T}}^{\prime})$ of corresponding edge-generators associated to a common internal edge $E$ lying between two triangles $\mathfrak{T}$ and $\mathfrak{T}^{\prime}$ in $\lambda$; 
		\item  and by edge-generators $\widehat{X}_{abc}^{\pm 1/n}$ in $\mathscr{T}_n^\omega(\widehat{\mathfrak{T}})$ associated to boundary edges $E \subset \partial \mathfrak{S}$ of $\lambda$.
	\end{itemize}
	In particular, when $q=\omega=1$, the Fock--Goncharov quantum torus $\mathscr{T}_n^1(\lambda)$ is  naturally isomorphic to the classical polynomial algebra $\mathbb{C}[X_1^{\pm 1/n}, X_2^{\pm 1/n}, \dots, X_N^{\pm 1/n}]$ (as indicated by the notation).
(Going forward, we will omit the `hat' symbol in the notation, naturally identifying triangles $\mathfrak{T}$ with $\widehat{\mathfrak{T}}$.)   
\end{definition}

\begin{remark}
An important difference between the local quantum tori  $\mathscr{T}_n^\omega(\mathfrak{T})$ for the triangles $\mathfrak{T}$ and the global quantum torus $\mathscr{T}_n^\omega(\lambda)$ for the triangulated surface $(\mathfrak{S}, \lambda)$ is that two edge-generators $X_{abc}^{\pm 1/n}$ and $X_{ABC}^{\pm 1/n}$ in $\mathscr{T}_n^\omega(\mathfrak{T})$ lying on the same boundary edge of $\mathfrak{T}$ may not commute, rather may $q$-commute, while the corresponding interior-generators $X_{abc}^{\pm 1/n} \otimes X_{a^\prime b^\prime c^\prime}^{\prime \pm 1/n}$ and $X_{ABC}^{\pm 1/n} \otimes X_{A^\prime B^\prime C^\prime}^{\prime \pm 1/n}$ in $\mathscr{T}_n^\omega(\lambda)$ always commute.  This is because the orientations of the two triangles' $\mathfrak{T}$ and $\mathfrak{T}^\prime$ quivers  go against each other (Figures \ref{fig:Fg-quiver}, \ref{fig:edge-coordinates-as-tensor-products}).  Intuitively, the local $q$-commutation relations on the boundary are created upon `splitting the edge-coordinates in half' at the quantum level (Remarks \ref{rem:boundary-coordinates}, \ref{rem:half-of-an-edge-coordinate}).  This phenomenon does not occur for $\mathrm{SL}_2$ because there each edge carries only one coordinate.  
\end{remark}	

\begin{figure}[htb]
     \centering
     \begin{subfigure}{0.50\textwidth}
         \centering
         \includegraphics[width=.7\textwidth]{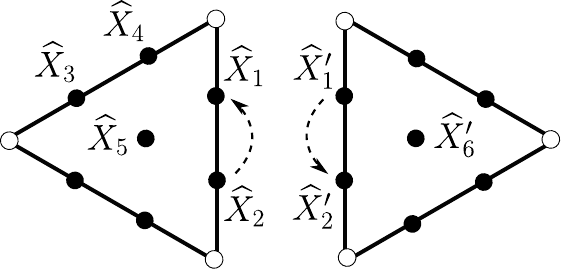}
         \caption{Before gluing}
         \label{fig:edge-coordinates-as-tensor-products-before-gluing}
     \end{subfigure}     
\hfill
     \begin{subfigure}{0.40\textwidth}
         \centering
         \includegraphics[width=.7\textwidth]{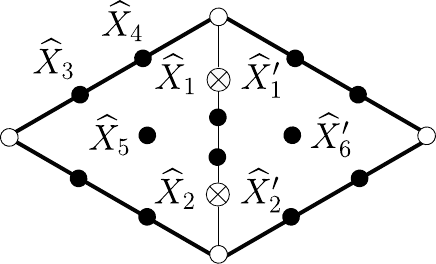}
         \caption{After gluing}
         \label{fig:edge-coordinates-as-tensor-products-after-gluing}
     \end{subfigure}
        \caption{Interior-generators as tensor products of local edge-generators, shown in the case $n=3$.}
        \label{fig:edge-coordinates-as-tensor-products}
\end{figure}

\section{Main theorem: quantum trace polynomials for \texorpdfstring{$\mathrm{SL}_3$}{SL3}}
\label{sec:main-theorem-quantum-trace-map-for-SL3}

\subsection{Framed oriented links in thickened surfaces}
\label{sec:framed-oriented-links}
	
So far, we have been working in the 2-dimensional setting of the punctured surface $\mathfrak{S}$.  We now turn to the 3-dimensional setting of the thickened surface $\mathfrak{S} \times (0, 1)$.  
We will follow \cite[\S 3.1]{BonahonGT11}, the only difference being that we consider oriented links.  

\begin{definition}
	A \textit{framed oriented link} $K$ in the thickened surface $\mathfrak{S} \times (0, 1)$ is a compact oriented one-dimensional manifold, possibly-with-boundary, $K \subset \mathfrak{S} \times (0, 1)$ that is embedded in $\mathfrak{S} \times (0, 1)$ and is equipped with a framing (see below), satisfying the following properties:  
\begin{itemize}
	\item  we have $\partial K = K \cap ((\partial \mathfrak{S}) \times (0, 1))$;   
	\item  the framing at a boundary point of $K$ is \textit{vertical}, meaning parallel to the $(0, 1)$ axis and pointing in the $1$ direction (or, in pictures, toward the eye of the reader);  
	\item  for each boundary component $k$ of $\mathfrak{S}$, the finitely many points $(\partial K) \cap (k \times (0, 1))$ have distinct \textit{heights}, meaning that the coordinates with respect to $(0, 1)$ are distinct.  
\end{itemize}
Here, by a \textit{framing}, we mean the choice of a smooth assignment along the link $K$ of unit  vectors in the tangent spaces of $\mathfrak{S} \times (0, 1)$ such that this vector field on $K$ is everywhere orthogonal to $K$.  
A \textit{framed oriented knot} $K$ is a closed framed oriented link (namely, a framed oriented link with empty boundary $\partial K = \emptyset$)  with one connected component.  
Two framed oriented links $K$ and $K^\prime$ are \textit{isotopic} if $K$ can be smoothly deformed to $K^\prime$ through the class of framed oriented links.  
By possibly introducing \textit{kinks} (Figures \ref{fig:positive-kink-skein-relation} and \ref{fig:negative-kink-skein-relation}), one can always isotope a framed link so that it has \textit{blackboard framing}, meaning constant vertical framing in the $1$ direction  (with respect to the $(0,1)$ coordinate).  
\end{definition}

\begin{remark}
\label{rem:higher-lower}
We display links in figures by their \textit{diagrams}, namely their projections onto the surface $\mathfrak{S}\cong\mathfrak{S}\times\{1/2\}$ equipped with over/under crossing information.  By convention, all link diagrams represent blackboard-framed links. 

Instead of using the picture conventions of \cite[\S 3.5]{BonahonGT11}, in our diagrams we will indicate explicitly which  points lying on a single boundary component $k \times (0, 1)$ of $\mathfrak{S}\times(0,1)$ are higher or lower with respect to the $(0, 1)$ direction.  (Note, importantly, that two points of $\partial K$ on a single boundary component $k \times (0,1)$ cannot exchange heights during an isotopy of the link.)

One can think of a framed link $K$ as a `ribbon', namely an oriented annulus (that is, oriented as a surface, not to be confused with link orientations)  embedded in $\mathfrak{S} \times (0, 1)$ where the framing  is perpendicular to the annulus and determined by the orientation.  
\end{remark}

\subsection{Stated links}
\label{ssec:stated-links}

\begin{definition}
\label{def:compatible}
	A ($n$-)\textit{stated framed oriented link} $(K, s)$ is  a framed oriented link $K$ equipped with a  function
	\begin{equation*}
		s : \partial K \to \{ 1, 2, \dots, n \},
	\end{equation*}
	called the \textit{state}, assigning to each element of the boundary of the link a  \textit{state-number} in $\{ 1, 2, \dots, n  \}$  (we often confuse  `state' with these state-numbers).  	
Note that a stated closed link is the same thing as a closed link.  
As for links, there is the corresponding notion of \textit{isotopy} of stated links. 

Let $(K, s)$ be a stated framed oriented link in a triangulated surface $(\mathfrak{S}, \lambda)$ obtained by gluing together two triangulated surfaces $(\mathfrak{S}_1, \lambda_1)$ and $(\mathfrak{S}_2, \lambda_2)$ along edges of the triangulations.  Let $K_1$ and $K_2$ be the associated links in $\mathfrak{S}_1$ and $\mathfrak{S}_2$.  We say states $s_1$ and $s_2$ for stated links $(K_1,s_1)$ and $(K_2,s_2)$ such that $s_1$ and $s_2$ agree with $s$ on $\partial K$ are \textit{compatible} if their values agree on the common boundaries of $K_1$ and $K_2$ (resulting from cutting $K$).  
\end{definition}

\subsection{Main result}
\label{ssec:main-theorem}

In this subsection, we  restrict to the case $n=3$.  Let the surface $\mathfrak{S}$ be equipped with a dotted ideal triangulation $\lambda$.  Recall the Fock--Goncharov quantum torus $\mathscr{T}_3^\omega(\lambda) \subset \bigotimes_{\textnormal{triangles } \mathfrak{T} \text{ of } \lambda} \mathscr{T}_3^\omega(\mathfrak{T})$ associated to this data; see Definition \ref{def:FG-algebra-of-the-whole-surface}.  

Note that if $K \subset \mathfrak{S} \times (0, 1)$ is a blackboard-framed oriented knot (meaning, in particular, that it is closed), and if $\pi : \mathfrak{S} \times (0, 1) \to \mathfrak{S} \times \{  1/2  \} \cong \mathfrak{S}$ is the natural projection, then, possibly after an arbitrarily small perturbation of the knot $K$, we have that $\gamma = \pi(K)$ is an immersed oriented closed curve in $\mathfrak{S}$, so we may consider the classical trace polynomial $\widetilde{\mathrm{Tr}}_\gamma(X_i^{1/3})$ in $\mathscr{T}_3^1(\lambda) = \mathbb{C}[X_1^{\pm 1/3}, X_2^{\pm 1/3}, \dots, X_N^{\pm 1/3}]$ associated to $\gamma$; see Definition \ref{def:second-classical-trace}.  

We now give a more detailed version of Theorem \ref{thm:intro-theorem-1} from \S \ref{sec:introduction}.  Technically, our solution involves choosing a square root $\omega^{1/2}$ of the parameter $\omega$.  Proofs will be given in \S \ref{sec:proof-of-the-main-theorem}, \ref{sec:computer-check-of-local-moves}.  

\begin{theorem}[$\mathrm{SL}_3$-quantum trace polynomials]
\label{thm:second-theorem}
	Let $q \in \mathbb{C} - \{ 0 \}$ be a non-zero complex number, and let $\omega = q^{1/3^2}$ be a $3^2$-root of $q$; choose also $\omega^{1/2}$.  There is a function 
	\begin{equation*}
		\mathrm{Tr}^\omega_\lambda  :
		\left\{  
			\textnormal{stated framed oriented links $(K,s)$ in } \mathfrak{S} \times (0, 1)
		\right\} 
		\to 
		\mathscr{T}_3^\omega(\lambda),
	\end{equation*}
satisfying the following properties:
\begin{enumerate}[\normalfont(A)]
	\item  the element $\mathrm{Tr}_\lambda^\omega(K, s) \in \mathscr{T}_3^\omega(\lambda)$ is invariant under isotopy of stated framed oriented links;
	\item  the $\mathrm{SL}_3$-HOMFLYPT skein relation (Figure {\upshape\ref{fig:HOMFLYPT-relation}} with $n=3$) holds;
	\item the $\mathrm{SL}_3$-quantum unknot and framing relations (Figures {\upshape\ref{fig:unknot-relation}} and {\upshape\ref{fig:framing-skein-relations}} with $n=3$) hold.  
\end{enumerate}
\end{theorem}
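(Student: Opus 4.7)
The plan is to define $\mathrm{Tr}^\omega_\idealtriang(K,s)$ by a state-sum, local-to-global construction that directly quantizes Definition \ref{def:second-classical-trace}. First, I would isotope $(K,s)$ into a generic position: project to a blackboard-framed oriented link diagram on $\surf$ that is transverse to the edges of $\idealtriang$, and within each triangle $\triang$, cut the diagram into elementary pieces of the types already handled in \S \ref{sec:local-monodromy-matrices} and \S \ref{ssec:quantum-left-and-right-matrices} (extended left-moving arcs, extended right-moving arcs, U-turns), together with elementary crossings. To each elementary piece I would assign a matrix in $\mathrm{M}_3(\mathscr{T}_3^\omega(\triang))$: the quantum left matrix $\vec{L}^\omega$, the quantum right matrix $\vec{R}^\omega$, the U-turn matrix $\vec{U}$, or a standard $\mathrm{SL}_3$ quantum $R$-matrix $\vec{R}^{\pm}$ for a positive or negative crossing (these come from the braiding on the fundamental corepresentation of $\mathrm{SL}_3^q$; compare Appendix \ref{sec:proof-of-reshetikhin-turaev-invariant}), with framing/ribbon corrections $\bar\zeta_3^{\pm 1}$ for kinks.

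Next, I would contract these matrices along each component in the order dictated by the orientation, using the Weyl ordering when multiplying across distinct triangles (so that the result lands in $\mathscr{T}_3^\omega(\idealtriang) \subset \bigotimes_\triang \mathscr{T}_3^\omega(\triang)$): for a closed component take the matrix trace, and for a component with boundary simply read off the matrix entry prescribed by the states $s$ at the two endpoints. The product over components then defines $\mathrm{Tr}_\idealtriang^\omega(K,s)$. At $q=\omega=1$ this manifestly reduces to $\prod_j \widetilde{\mathrm{Tr}}_{\gamma_j}(X_i^{1/3})$ because the Weyl ordering is trivial in the commutative limit and the classical $R$-matrix degenerates to a permutation, recovering Definition \ref{def:second-classical-trace}.

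The core of the proof is verifying property (A), invariance under isotopy of the stated link, which reduces to a finite list of local moves: planar isotopies inside a triangle, Reidemeister II and III moves, moves pushing a strand across an edge of $\idealtriang$, moves sliding a strand past a triangle corner (puncture), and moves exchanging the heights of endpoints along a boundary edge. For the moves that take place strictly inside a triangle, (B) HOMFLYPT and (C) unknot/framing follow from the fact that $\vec L^\omega$ and $\vec R^\omega$ are $\mathrm{SL}_3^q$-points (Theorem \ref{thm:first-theorem}), together with the standard Reshetikhin-Turaev identities for the fundamental corepresentation of $\mathrm{SL}_3^q$; once one knows the elementary matrices assemble into $\mathrm{SL}_3^q$-valued monodromies, the HOMFLYPT, unknot and framing identities are formal consequences of the Hopf-algebra structure on $\mathrm{SL}_3^q$. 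These same identities, applied locally, will also be used to verify Reidemeister invariance in the interior of a triangle.

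The main obstacle will be the moves that mix the local pictures: the edge-crossing moves (pushing a crossing, a U-turn, or a kink across an edge of $\idealtriang$), and the moves around a puncture, where several arcs, crossings and edge matrices interact and the Weyl-ordered matrix entries in $\mathscr{T}_3^\omega(\idealtriang)$ must agree on the nose. These are intricate non-commutative identities in the quantum torus; I would reduce each of them, via the bialgebra-type structure of $\vec L^\omega$ and $\vec R^\omega$ used in \cite{BonahonGT11} for $n=2$, to a finite list of polynomial identities that can be checked symbolically and, where appropriate, verified with computer assistance as in Appendix \ref{sec:the-appendix}. A useful organizing principle throughout is the splitting of each classical edge-coordinate into two quantum halves (Remark \ref{rem:half-of-an-edge-coordinate}): an edge-crossing move becomes an identity between the product $\vec M^\mathrm{edge}(\vec Z)\, \vec M^\mathrm{edge}(\vec Z')$ living in two adjacent triangles and the single interior-generator $Z \otimes Z'$ living on the glued edge in $\mathscr{T}_3^\omega(\idealtriang)$. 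Once isotopy invariance is established, (B) and (C) follow because every instance of the skein relations can be pushed inside a single triangle where the $\mathrm{SL}_3^q$ structure already implies the relation.
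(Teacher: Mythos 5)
Your overall architecture (state sum from local matrices, reduction of isotopy invariance to a finite list of local moves, computer verification of the hard ones) is the paper's strategy, but the proposal is missing the device that makes the construction well-defined, and two of its concrete claims would fail. The paper does \emph{not} place crossings, U-turns or kinks inside triangles: it passes to the split ideal triangulation $\splitidealtriang$, replacing each edge of $\idealtriang$ by a biangle $\biang_E$, and isotopes the link into good position so that \emph{all} crossings, U-turns, kinks and height changes live in the thickened biangles, where the invariant is a \emph{scalar} (a symmetrized Reshetikhin--Turaev invariant, Proposition \ref{prop:reshetikhin-turaev}); only flat arcs remain over the triangles, and these receive $\vec{L}^\omega$ and $\vec{R}^\omega$. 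Without this separation your ordering prescription is wrong: the Weyl ordering is used only \emph{inside} the entries of the local matrices, not to combine contributions of distinct strands. Contributions of different arcs in the same triangle do not commute and are multiplied in the height order (``ordered lower to higher, multiply left to right''); Weyl-symmetrizing them instead would destroy the Multiplication Property and isotopy invariance. Across distinct triangles no ordering issue arises at all, since the factors live in different tensor slots of $\bigotimes_\triang \mathscr{T}_3^\omega(\triang)$.

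Two further points. First, the quantum U-turn matrix is not the classical $\vec{U}$ but $\vec{U}^q$, normalized by $\bar\sigma_3=q^{-4/3}$ with entries in geometric progression of ratio $-q$, and the four oriented U-turns receive four different matrices ($\vec{U}^q$, $(\bar\zeta_3)^{-1}\vec{U}^q$ and their transposes); with the classical $\vec{U}$ the Moves (I) and (II) computations (e.g.\ Equation \eqref{eq:move-i-example}) fail. Second, your logical dependencies for (B) and (C) are misplaced: the HOMFLYPT, unknot and framing relations are properties of the scalar $R$-matrices $\vec{C}^q_{\mathrm{same}}$, $\vec{C}^q_{\mathrm{opp}}$ and of $\vec{U}^q$ in the biangle --- i.e.\ of the ribbon category of $\mathrm{SL}_3^q$-comodules --- and have nothing to do with $\vec{L}^\omega,\vec{R}^\omega$ being $\mathrm{SL}_3^q$-points. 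Theorem \ref{thm:first-theorem} is what is needed for the edge-compatibility Moves (III) and (IV), i.e.\ for commuting a crossing in a biangle past the adjacent triangle matrices. Your closing remark about matching $\vec{M}^{\mathrm{edge}}(\vec{Z})\,\vec{M}^{\mathrm{edge}}(\vec{Z}')$ with the glued generator $Z\otimes Z'$ is on target, but it is a feature of the definition of $\mathscr{T}_3^\omega(\idealtriang)$ rather than an identity to be proved.
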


\begin{compl}
\label{compl}
Moreover, this invariant satisfies the following additional properties.  
\begin{itemize} 
	\item  (Classical Trace Property)  Let $q = \omega = \omega^{1/2} = 1$ and let $K$ be a closed blackboard-framed oriented knot.  Then,
	\begin{equation*}
	\label{eq:quantum-trace-classical-property}
		\mathrm{Tr}^1_\lambda(K) = \widetilde{\mathrm{Tr}}_{\gamma}(X_i^{1/3})
		  \in  
		\mathscr{T}_3^1(\lambda),
	\end{equation*}
	where $\gamma$ is the immersed oriented closed curve obtained by projecting $K$ to $\mathfrak{S}$.  
	\item  (Multiplication Property)  Let $(K,s) = (K_1,s_1) \cup (K_2,s_2) \cup \dots \cup (K_\ell,s_\ell)$ be a stated framed oriented link, written as a disjoint union of links $K_j$.  Assume in addition that $K_{j-1}$ lies entirely below $K_{j}$ in $\mathfrak{S} \times (0, 1)$, with respect to the height coordinate.  Then, 
	\begin{equation*}
	\label{eq:quantum-trace-multiplicative-property}
		\mathrm{Tr}^\omega_\lambda(K, s) = \mathrm{Tr}^\omega_\lambda(K_1, s_1) \mathrm{Tr}^\omega_\lambda(K_2, s_2) \cdots \mathrm{Tr}^\omega_\lambda(K_\ell, s_\ell)
		  \in  
		\mathscr{T}_3^\omega(\lambda).
	\end{equation*}
	Note that the order of multiplication matters, since $\mathscr{T}_3^\omega(\lambda)$ is non-commutative.
	\item  (State Sum Property)  Let $(K, s)$ be a stated framed oriented link in a triangulated surface $(\mathfrak{S}, \lambda)$ obtained by gluing together two triangulated surfaces $(\mathfrak{S}_1, \lambda_1)$ and $(\mathfrak{S}_2, \lambda_2)$.  Let $K_1$ and $K_2$ be the associated links in $\mathfrak{S}_1$ and $\mathfrak{S}_2$.  Then,
\begin{equation*}
	\mathrm{Tr}^\omega_\lambda(K,s)=\sum_{\textnormal{compatible } s_1, s_2} \mathrm{Tr}^\omega_{\lambda_1}(K_1, s_1) \otimes \mathrm{Tr}^\omega_{\lambda_2}(K_2, s_2)  \in \mathscr{T}^\omega_3(\lambda).
\end{equation*} 
\end{itemize}
\end{compl}

\section{Quantum trace polynomials for \texorpdfstring{$\mathrm{SL}_n$}{SLn}}
\label{sec:proof-of-the-main-theorem}

The corresponding version of Theorem \ref{thm:second-theorem} and Complement \ref{compl} should hold in the case of $\mathrm{SL}_n$, by replacing $3$ with $n$ everywhere in the statement.  In this section, we will construct the quantum trace map $\mathrm{Tr}_\lambda^\omega$ for general $n$. However, we only give a proof that it is well-defined for $n=3$.   For concreteness, along the way we will give explicit formulas for the case $n=3$.   When $n=2$, our construction coincides with that in \cite{BonahonGT11}.  In particular, our construction gives a way to think of their construction, which was defined for un-oriented links, in terms of oriented links.  
Throughout, fix $q \in \mathbb{C} - \{ 0 \}$ and a $n^2$-root $\omega = q^{1/n^2} \in \mathbb{C} - \{ 0 \}$ of $q$.  Technically, also choose $\omega^{1/2}$.  
\subsection{Matrix conventions}
\label{ssec:matrix-conventions}

We will need to display $3 \times 3$ and $3^2 \times 3^2$ matrices.  Lower indices will indicate rows and upper indices will indicate columns.  A $3 \times 3$ matrix $\vec{M} = (M_i^j)$ will be displayed in the general form
\begin{equation*}
	\vec{M} = 
	\left(\begin{smallmatrix}
		M_1^1&M_1^2&M_1^3\\
		M_2^1&M_2^2&M_2^3\\
		M_3^1&M_3^2&M_3^3
	\end{smallmatrix}\right).
\end{equation*}
A $3^2 \times 3^2$ matrix $\vec{M} = (M_{i_1 i_2}^{j_1 j_2})$ will be displayed in the general form
\begin{equation*}
\label{eq:matrix-conventions}
	\vec{M} = 
	\left(\begin{smallmatrix}
		M_{11}^{11}&M_{11}^{12}&M_{11}^{13}&
			M_{11}^{21}&M_{11}^{22}&M_{11}^{23}&
			M_{11}^{31}&M_{11}^{32}&M_{11}^{33}\\
		M_{12}^{11}&M_{12}^{12}&M_{12}^{13}&
			M_{12}^{21}&M_{12}^{22}&M_{12}^{23}&
			M_{12}^{31}&M_{12}^{32}&M_{12}^{33}\\		
		M_{13}^{11}&M_{13}^{12}&M_{13}^{13}&
			M_{13}^{21}&M_{13}^{22}&M_{13}^{23}&
			M_{13}^{31}&M_{13}^{32}&M_{13}^{33}\\
		M_{21}^{11}&M_{21}^{12}&M_{21}^{13}&
			M_{21}^{21}&M_{21}^{22}&M_{21}^{23}&
			M_{21}^{31}&M_{21}^{32}&M_{21}^{33}\\
		M_{22}^{11}&M_{22}^{12}&M_{22}^{13}&
			M_{22}^{21}&M_{22}^{22}&M_{22}^{23}&
			M_{22}^{31}&M_{22}^{32}&M_{22}^{33}\\
		M_{23}^{11}&M_{23}^{12}&M_{23}^{13}&
			M_{23}^{21}&M_{23}^{22}&M_{23}^{23}&
			M_{23}^{31}&M_{23}^{32}&M_{23}^{33}\\
		M_{31}^{11}&M_{31}^{12}&M_{31}^{13}&
			M_{31}^{21}&M_{31}^{22}&M_{31}^{23}&
			M_{31}^{31}&M_{31}^{32}&M_{31}^{33}\\
		M_{32}^{11}&M_{32}^{12}&M_{32}^{13}&
			M_{32}^{21}&M_{32}^{22}&M_{32}^{23}&
			M_{32}^{31}&M_{32}^{32}&M_{32}^{33}\\
		M_{33}^{11}&M_{33}^{12}&M_{33}^{13}&
			M_{33}^{21}&M_{33}^{22}&M_{33}^{23}&
			M_{33}^{31}&M_{33}^{32}&M_{33}^{33}\\
	\end{smallmatrix}\right).
\end{equation*}
If $V$ and $W$ are finite-dimensional complex vector spaces with bases $\{ v_1, \dots, v_m \}$ and $\{ w_1, \dots, w_p \}$ and if $T : V \to W$ is a linear map, we define the $p \times m$ matrix $[T] \in \mathrm{M}_{p,m}(\mathbb{C})$ associated to $T$ and these bases of $V$ and $W$ by the property
\begin{equation*}
	T(v_j) = \sum_{i=1}^p [T]_i^j w_i
	\,\,  (j=1, 2, \dots, m).
\end{equation*}

\subsection{Biangle quantum trace map}
\label{ssec:biangles-and-the-reshetikhin-turaev-invariant}
	
A \textit{biangle} $\mathfrak{B}$ is a closed disk with two punctures on its boundary.  Biangles do not admit ideal triangulations, so $\mathfrak{S}$ is never a biangle.  However, we may still consider stated framed oriented links $(K, s)$ in the thickened biangle $\mathfrak{B} \times (0, 1)$ defined just as before.  In this subsection, we will (implicitly) use the Reshetikhin--Turaev construction \cite{ReshetikhinCommMathPhys90} to provide an analogue of Theorem \ref{thm:second-theorem} and Complement \ref{compl} for biangles, valued in the complex numbers $\mathbb{C}$; see Appendix \ref{sec:proof-of-reshetikhin-turaev-invariant} for the explicit (and more conceptual) connection to \cite{ReshetikhinCommMathPhys90}.  Note that this subsection does not require the choice of square root $\omega^{1/2}$.  

Parametrize the thickened biangle $\mathfrak{B} \times (0, 1) \cong [0, 1] \times \mathbb{R} \times (0, 1)$ such that, in Figure \ref{fig:U-turn-dec-cw} say, the first coordinate points along the page to the right, the second coordinate points along the page up, and the third coordinate points out of the page toward the eye of the reader.  Note this parametrization is not canonical: there are two possibilities, related by `turning the biangle on its head'.  The construction will be independent of this choice of parametrization (see the comments after Proposition \ref{prop:reshetikhin-turaev}).

In order to state the result, we first define some elementary matrices associated to certain local link diagrams, namely various U-turns and crossings.  

\subsubsection{U-turns}
\label{sssec:U-turns}

  In Figures \ref{fig:decreasing-U-turns} and \ref{fig:increasing-U-turns}, we show the four possible U-turns, which are in particular stated framed oriented links with the blackboard framing.  
In agreement with our picture conventions (see Remark \ref{rem:higher-lower}), the boundary point of the link that is labeled `Higher' or `H' is higher, namely has a greater coordinate with respect to the $(0, 1)$ direction, than the boundary point of the link that is labeled `Lower' or `L'.  

\begin{definition}
\label{def:ribbon-element}  $ $
\begin{itemize}
\item	The \textit{$\mathrm{SL}_n$-coribbon element} is
\begin{equation*}
	\bar{\zeta}_n \overset{\text{def}}{=} (-1)^{n-1} q^{(1-n^2)/n} 
	\,\, \left( = (-1)^{n-1} \omega^{n(1-n^2)} \right)
	\,\,  \in \mathbb{C}-\{0\}.
\end{equation*}
(Note the overbar notation in $\bar{\zeta}_n$ does not mean `complex conjugation'.)  For the categorical context, see \S \ref{ssec:coribbon element}.
\item   Define the \textit{square root of the (signed) coribbon element} by
\begin{equation*}
	\bar{\sigma}_n \overset{\text{def}}{=} + q^{(1-n^2)/2n}
	\,\, \left( = +\omega^{n(1-n^2)/2} \right)
	\,\, \in \mathbb{C}-\{0\}.
\end{equation*}
We require the parenthetical `signed' because we can only say $\bar{\sigma}_n^2 = (-1)^{n-1} \bar{\zeta}_n$.  
Note that $n(1-n^2)$ is always even, so here we do not need to choose a square root $\omega^{1/2}$.  
\item  Define a $n \times n$ matrix $\vec{U}^q$ over the complex numbers by
\begin{equation*}
	\vec{U}^q \overset{\text{def}}{=} \bar{\sigma}_n
	\left(\begin{smallmatrix}
		&&&&(-1)^{n-1} q^{(1-n)/2}\\
		&&&\reflectbox{$\ddots$}&\\
		&&+q^{(n-5)/2}&&\\
		&-q^{(n-3)/2}&&&\\
		+q^{(n-1)/2}&&&&
	\end{smallmatrix}\right)
	  \in  \mathrm{M}_n(\mathbb{C}),
\end{equation*}
where it is implicit that we are putting $q=\omega^{n^2}$ as above, so $q^{(n-1-2k)/2}$ is defined.  Note that the common ratio between adjacent entries in the matrix is equal to $-q$.  Note also that putting $q=\omega=1$ recovers the classical U-turn matrix $\vec{U}$ from \S \ref{sec:local-monodromy-matrices}.    For example, in the case $n=3$, the  $3 \times 3$ matrix $\vec{U}^q$ is
\begin{equation*}
\label{eq:non-standard-duality}
	\vec{U}^q =
	+q^{-4/3} \left(\begin{smallmatrix}
		0&0&+q^{-1}\\
		0&-1&0\\
		+q&0&0
	\end{smallmatrix}\right)
	  \in \mathrm{M}_3(\mathbb{C}).
\end{equation*}
\end{itemize}
\end{definition}
\begin{definition}

For each pair of states $s_1, s_2 \in \{  1, 2, \dots, n  \}$, define four complex numbers 
\begin{equation*}
	\mathrm{Tr}^\omega_\mathfrak{B}(U_\text{dec}^\text{cw})_{s_1}^{s_2}, \,\, \mathrm{Tr}^\omega_\mathfrak{B}(U_\text{dec}^\text{ccw})_{s_1}^{s_2}, \,\,
\mathrm{Tr}^\omega_\mathfrak{B}(U_\text{inc}^\text{ccw})_{s_1}^{s_2}, \,\, \mathrm{Tr}^\omega_\mathfrak{B}(U_\text{inc}^\text{cw})_{s_1}^{s_2}  \in \mathbb{C},
\end{equation*}
by the matrix equations
\begin{gather*}
	(\mathrm{Tr}^\omega_\mathfrak{B}(U_\text{dec}^\text{cw})_{s_1}^{s_2}) \overset{\text{def}}{=} \vec{U}^q
	\in  \mathrm{M}_n(\mathbb{C}),\,\,
	(\mathrm{Tr}^\omega_\mathfrak{B}(U_\text{dec}^\text{ccw})_{s_1}^{s_2})
	\overset{\text{def}}{=}  (\bar{\zeta}_n)^{-1} \vec{U}^q
	\in  \mathrm{M}_n(\mathbb{C}),
\\
	(\mathrm{Tr}^\omega_\mathfrak{B}(U_\text{inc}^\text{ccw})_{s_1}^{s_2}) \overset{\text{def}}{=} (\vec{U}^q)^\mathrm{T}
	\in  \mathrm{M}_n(\mathbb{C}),\,\,
	(\mathrm{Tr}^\omega_\mathfrak{B}(U_\text{inc}^\text{cw})_{s_1}^{s_2})
	\overset{\text{def}}{=}  (\bar{\zeta}_n)^{-1}  (\vec{U}^q)^\mathrm{T}
	\in  \mathrm{M}_n(\mathbb{C}),
\end{gather*}
see \S \ref{ssec:matrix-conventions}.  Here, the superscript $\mathrm{T}$ indicates that we are taking the matrix transpose.  
For example, in the case $n=3$, in the above formulas $(\bar{\zeta}_3)^{-1} = +q^{+8/3}$.
\end{definition}

\begin{remark}
When $n=2$, these formulas agree with those in \cite[Proposition 13.2.b]{BonahonGT11} for the underlying un-oriented link, taking $\alpha = -\omega^{-5}$ and $\beta = \omega^{-1}$; see \cite[Prop. 26]{BonahonGT11}.  
\end{remark}

\begin{figure}[htb]
     \centering
     \begin{subfigure}{0.49\textwidth}
         \centering
         \includegraphics[width=.275\textwidth]{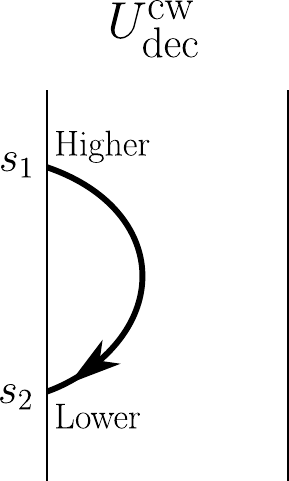}
         \caption{Clockwise}
         \label{fig:U-turn-dec-cw}
     \end{subfigure}     
\hfill
     \begin{subfigure}{0.49\textwidth}
         \centering
         \includegraphics[width=.275\textwidth]{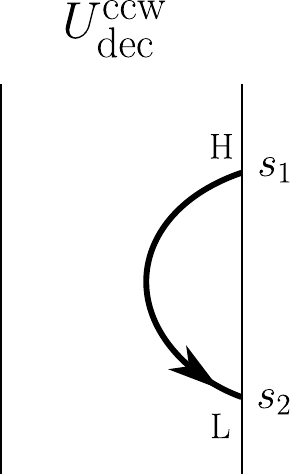}
         \caption{Counterclockwise}
         \label{fig:U-turn-dec-ccw}
     \end{subfigure}
     	\caption{Decreasing U-turns.}
        \label{fig:decreasing-U-turns}
\end{figure}

\begin{figure}[htb]
     \centering
     \begin{subfigure}{0.49\textwidth}
         \centering
         \includegraphics[width=.275\textwidth]{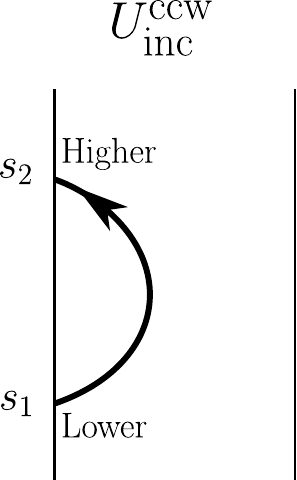}
         \caption{Counterclockwise}
         \label{fig:U-turn-inc-ccw}
     \end{subfigure}     
\hfill
     \begin{subfigure}{0.49\textwidth}
         \centering
         \includegraphics[width=.275\textwidth]{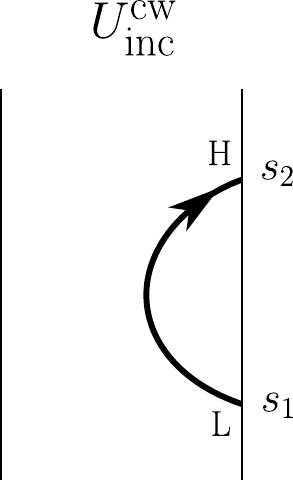}
         \caption{Clockwise}
         \label{fig:U-turn-inc-cw}
     \end{subfigure}
     	\caption{Increasing U-turns.}
        \label{fig:increasing-U-turns}
\end{figure}

\subsubsection{Crossings}
\label{sssec:crossings}
	
Shown in Figures \ref{fig:same-direction-crossings} and \ref{fig:opposite-direction-crossings} are the eight possible crossings, with blackboard framing and  the usual picture conventions as above.  

Let $V$ be a $n$-dimensional complex vector space, and let $V^*$ be the complex vector space dual to $V$.  Choose a linear basis $\{ e^1, e^2, \dots, e^n \}$ for $V$, and let $\{ e_1^*, e_2^*, \dots, e_n^* \}$ be the corresponding dual basis for $V^*$.  Define four linear isomorphisms
\begin{gather*}
	\bar{c}_{V, V} : V \otimes V \to V \otimes V,
\,\,	\bar{c}_{V^*, V^*} : V^* \otimes V^* \to V^* \otimes V^*,
	\\  \bar{c}_{V^*, V} : V^* \otimes V \to V \otimes V^*,
\,\,	  \bar{c}_{V, V^*} : V \otimes V^* \to V^* \otimes V,
\end{gather*} 
by extending linearly the following assignments for tensor product basis elements
\begin{gather*}		
	\bar{c}_{V, V}(e^i \otimes e^j) 
\overset{\text{def}}{=} q^{+1/n} 
\begin{cases}
q^{-1} e^i \otimes e^i,  &  i = j,  \\
(q^{-1} - q) e^i \otimes e^j     +                e^j \otimes e^i,                         &  i < j,   \\
e^j \otimes e^i,       &  i > j,
\end{cases}
\\		
	\bar{c}_{V^*, V^*}(e_i^* \otimes e_j^*) 
\overset{\text{def}}{=} q^{+1/n} 
\begin{cases}
q^{-1} e_i^* \otimes e_i^*,  &  i = j,  \\
(q^{-1} - q) e_i^* \otimes e_j^*     +                e_j^* \otimes e_i^*,                         &  i > j,   \\
e_j^* \otimes e_i^*,       &  i < j,
\end{cases}
\\
	\bar{c}_{V^*, V}(e^*_i \otimes e^j) 
	\overset{\text{def}}{=} q^{-1/n} 
	\begin{cases}
		q e^i \otimes e^*_i + (q - q^{-1}) \sum_{1 \leq k < i} e^k \otimes e^*_k,  &  i = j,  \\
		e^j \otimes e^*_i,  &  i \neq j,
	\end{cases}	
\\
	\bar{c}_{V, V^*}(e^i \otimes e^*_j) 
\overset{\text{def}}{=}  q^{-1/n} 
\begin{cases}
	q e^*_i \otimes e^i + (q - q^{-1}) \sum_{i < k \leq n} q^{2(k-i)} e^*_k \otimes e^k,  &  i = j,  \\
	e^*_j \otimes e^i,  &  i \neq j.  
	\end{cases}
\end{gather*}
Define bases $\beta_{V, V}$, $\beta_{V^*, V^*}$, $\beta_{V^*, V}$, and $\beta_{V, V^*}$ of $V \otimes V$, $V^* \otimes V^*$, $V^* \otimes V$, and $V \otimes V^*$ by
\begin{gather*}
	(\beta_{V, V})_{ij}
	\overset{\text{def}}{=}  e^i \otimes e^j,	
\,\,
	(\beta_{V^*, V^*})_{ij}
	\overset{\text{def}}{=} (-q)^{n-i} e^*_{n-i+1} \otimes (-q)^{n-j} e^*_{n-j+1},
\\
	(\beta_{V^*, V})_{ij} \overset{\text{def}}{=}
		 (-q)^{n-i} e^*_{n-i+1} \otimes e^j,  
\,\,
	(\beta_{V, V^*})_{ij} \overset{\text{def}}{=}
		 e^i \otimes (-q)^{n-j} e^*_{n-j+1}.
\end{gather*}
For example, when $n=2$, the ordered basis $\beta_{V, V}$ is $ \left\{ e^1 \otimes e^1, e^1 \otimes e^2, e^2 \otimes e^1, e^2 \otimes e^2 \right\}$.  

The following fact, a simple calculation from the above definitions, motivates the definitions of the matrices $\vec{C}^q_\mathrm{same}$ and $\vec{C}^q_\mathrm{opp}$  below (and will be used in Appendix \ref{sec:proof-of-reshetikhin-turaev-invariant}).

\begin{fact}
\label{fact:braiding-matrices}
	We have the following equalities of matrices
\begin{gather*}
	\vec{C}^q_\mathrm{same} \overset{\mathrm{def}}{=} [\bar{c}_{V, V}] = [\bar{c}_{V^*, V^*}]
	  \in  \mathrm{M}_{n^2}(\mathbb{C}),
\,\,	\vec{C}^q_\mathrm{opp} \overset{\mathrm{def}}{=} [\bar{c}_{V^*, V}] = [\bar{c}_{V, V^*}]
  \in  \mathrm{M}_{n^2}(\mathbb{C}),
\end{gather*}
representing the linear isomorphisms $\bar{c}_{V, V}$, $\bar{c}_{V^*, V^*}$, $\bar{c}_{V^*, V}$ and $\bar{c}_{V^*, V}$ when expressed in terms of the bases $\beta_{V, V}$, $\beta_{V^*, V^*}$, $\beta_{V^*, V}$ and $\beta_{V, V^*}$.  Also, these matrices are symmetric.  \qed
\end{fact}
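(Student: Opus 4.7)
Both equalities amount to verifying that two prescribed linear isomorphisms, one on $V$-type tensor products and one on $V^*$-type tensor products, have the same matrix entries once the basis $\beta_{V^*,\cdot}$ or $\beta_{\cdot,V^*}$ has absorbed the twist factors $(-q)^{n-i}$ together with the index reversal $i\mapsto n-i+1$. The strategy is direct calculation: I would expand $\bar c_{V^*,V^*}((\beta_{V^*,V^*})_{ij})$, simplify using the defining formula for $\bar c_{V^*,V^*}$, and rewrite the output back in terms of the basis elements $(\beta_{V^*,V^*})_{kl}$, then check coefficient-by-coefficient against the known expansion of $\bar c_{V,V}(e^i\otimes e^j)$ in the basis $\beta_{V,V}$. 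The same plan is used for the second equality with $\bar c_{V^*,V}$ and $\bar c_{V,V^*}$.

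First I would handle the equality $\textbf{[}\bar c_{V,V}\textbf{]}=\textbf{[}\bar c_{V^*,V^*}\textbf{]}$. Writing $\tilde\imath=n-i+1$, the factor $(-q)^{n-i}(-q)^{n-j}$ in $(\beta_{V^*,V^*})_{ij}=(-q)^{n-i}(-q)^{n-j}\,e^*_{\tilde\imath}\otimes e^*_{\tilde\jmath}$ is symmetric in $i,j$, and in particular invariant under swapping the two tensor factors. Consequently, when $\bar c_{V^*,V^*}$ outputs the permuted vector $e^*_{\tilde\jmath}\otimes e^*_{\tilde\imath}$, the resulting scalar is exactly the one needed to produce $(\beta_{V^*,V^*})_{ji}$. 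The case analysis ($\tilde\imath=\tilde\jmath$, $\tilde\imath>\tilde\jmath$, $\tilde\imath<\tilde\jmath$) then matches the case analysis ($i=j$, $i<j$, $i>j$) in $\bar c_{V,V}$ after the reversal, which is precisely why the piecewise definitions of $\bar c_{V,V}$ and $\bar c_{V^*,V^*}$ have the roles of $<$ and $>$ swapped. This shows $\vec C^q_{\mathrm{same}}$ is well-defined.

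The second equality $\textbf{[}\bar c_{V^*,V}\textbf{]}=\textbf{[}\bar c_{V,V^*}\textbf{]}$ is more delicate. On the off-diagonal locus $i\ne j$, both $\bar c_{V^*,V}$ and $\bar c_{V,V^*}$ simply transpose the two tensor factors with a factor of $q^{-1/n}$, and one checks the twist factors align automatically under the identification $(\beta_{V^*,V})_{ab}\leftrightarrow(\beta_{V,V^*})_{a',b'}$. The substantive computation is on the diagonal $i=j$: I would evaluate $\bar c_{V^*,V}((\beta_{V^*,V})_{a,n-a+1})$ using the sum $\sum_{k<i}e^k\otimes e^*_k$, then convert each term $e^k\otimes e^*_k$ into $(-q)^{-(k-1)}(\beta_{V,V^*})_{k,n-k+1}$; the leading factor $(-q)^{n-a}=(-q)^{i-1}$ combines with $(-q)^{-(k-1)}$ to produce $(-q)^{i-k}$. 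On the other side, $\bar c_{V,V^*}((\beta_{V,V^*})_{c,n-c+1})$ produces the sum $\sum_{k>i'}q^{2(k-i')}e^*_k\otimes e^k$, and after analogous conversion via $(\beta_{V^*,V})$ the combined factor is $(-q)^{i'-1}\cdot q^{2(k-i')}\cdot(-q)^{-(k-1)}=(-q)^{k-i'}$ (using that $(-q)^{2(k-i')}=q^{2(k-i')}$). After relabeling indices so that the source of $\bar c_{V^*,V}$ at position $(a,b)=(n-i+1,i)$ is compared with the source of $\bar c_{V,V^*}$ at position $(c,d)=(k,n-k+1)$, the two coefficient systems coincide.

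The main obstacle will be the last paragraph: the interplay between the weight $q^{2(k-i)}$ appearing only in $\bar c_{V,V^*}$, the signed twist $(-q)^{n-\cdot}$, and the index reversal $i\mapsto n-i+1$ must conspire in just the right way. A clean way to organize the bookkeeping is to introduce the auxiliary identities $(-q)^{n-i}\,e^i\otimes e^*_i=(\beta_{V,V^*})_{i,n-i+1}\cdot(-q)$ and its $V^*\otimes V$ analogue at the outset, and then treat the diagonal and off-diagonal computations in parallel for the two pairs. I would also verify the case $n=2$ by hand as a sanity check, where the statement reduces to the classical identification of the standard $\mathrm{SL}_2$ $R$-matrix on $V\otimes V$ with its dual version on $V^*\otimes V^*$.
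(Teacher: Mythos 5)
Your proposal is correct and matches the paper's approach: the paper's proof of this Fact is simply ``a simple calculation from the above definitions,'' and your computation carries out exactly that verification, with the key cancellations (the symmetric twist factor $(-q)^{n-i}(-q)^{n-j}$ in the first equality, and the identity $(-q)^{i'-1}q^{2(k-i')}(-q)^{-(k-1)}=(-q)^{k-i'}$ matching $(-q)^{i-k}$ after the reversal $k\mapsto n-k+1$ in the second) all checking out. No gaps.
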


For example, in the case $n=3$, these two $3^2 \times 3^2$ matrices $\vec{C}_\text{same}^q$ and $\vec{C}_\text{opp}^q$ are given by
\begin{gather*}
\label{eq:same-R-matrix}
	\vec{C}_\text{same}^q  = q^{+1/3}
	\left(\begin{smallmatrix}
q^{-1}&0&0&0&0&0&0&0&0\\
0&q^{-1}-q&0&1&0&0&0&0&0\\		
0&0&q^{-1}-q&0&0&0&1&0&0\\	
0&1&0&0&0&0&0&0&0\\	
0&0&0&0&q^{-1}&0&0&0&0\\	
0&0&0&0&0&q^{-1}-q&0&1&0\\	
0&0&1&0&0&0&0&0&0\\	
0&0&0&0&0&1&0&0&0\\	
0&0&0&0&0&0&0&0&q^{-1}
	\end{smallmatrix}\right)
	  \in  \mathrm{M}_{3^2}(\mathbb{C}),
\\
\label{eq:opp-R-matrix}
	\vec{C}_\text{opp}^q  = q^{+2/3}
	\left(\begin{smallmatrix}
q^{-1}&0&0&0&0&0&0&0&0\\
0&0&0&q^{-1}&0&0&0&0&0\\
0&0&q^2-1&0&q^{-1}-q&0&1&0&0\\
0&q^{-1}&0&0&0&0&0&0&0\\
0&0&q^{-1}-q&0&1&0&0&0&0\\
0&0&0&0&0&0&0&q^{-1}&0\\
0&0&1&0&0&0&0&0&0\\
0&0&0&0&0&q^{-1}&0&0&0\\
0&0&0&0&0&0&0&0&q^{-1}
	\end{smallmatrix}\right)
	  \in  \mathrm{M}_{3^2}(\mathbb{C}).
\end{gather*}	

See Remark \ref{rem:comment-on-coribbon-element} for a discussion of how Fact \ref{fact:braiding-matrices} relates to the  quantum group $\mathrm{SL}_n^q$.
	
An observation is that, for general $n$, when $q=\omega=1$ then the two matrices $\vec{C}_\text{same}^1$ and $\vec{C}_\text{opp}^1$ are identical.  For another property of these  \textit{$R$-matrices}, see \S  \ref{sec:HOMFLYPT-skein-relation}.  

\begin{definition}
For each quadruple of states $s_1, s_2, s_3, s_4 \in \{  1, 2, \dots, n  \}$, define eight complex numbers 
\begin{gather*}
	\mathrm{Tr}^\omega_\mathfrak{B}(C_\text{pos-same}^\text{over-to-lower})_{s_1 s_2}^{s_3 s_4}
	, \,\, \mathrm{Tr}^\omega_\mathfrak{B}(C_\text{neg-same}^\text{over-to-higher})_{s_1 s_2}^{s_3 s_4}
	, \,\, \mathrm{Tr}^\omega_\mathfrak{B}(C_\text{pos-same}^\text{over-to-higher})_{s_1 s_2}^{s_3 s_4}
	, \,\, \mathrm{Tr}^\omega_\mathfrak{B}(C_\text{neg-same}^\text{over-to-lower})_{s_1 s_2}^{s_3 s_4}
	,
\\
	\mathrm{Tr}^\omega_\mathfrak{B}(C_\text{neg-opp}^\text{over-to-lower})_{s_1 s_2}^{s_3 s_4}
	, \,\, \mathrm{Tr}^\omega_\mathfrak{B}(C_\text{pos-opp}^\text{over-to-higher})_{s_1 s_2}^{s_3 s_4}
	, \,\, \mathrm{Tr}^\omega_\mathfrak{B}(C_\text{neg-opp}^\text{over-to-higher})_{s_1 s_2}^{s_3 s_4}
	, \,\, \mathrm{Tr}^\omega_\mathfrak{B}(C_\text{pos-opp}^\text{over-to-lower})_{s_1 s_2}^{s_3 s_4},
\end{gather*}
by the matrix equations
\begin{gather*}
	(\mathrm{Tr}^\omega_\mathfrak{B}(C_\text{pos-same}^\text{over-to-lower})_{s_1 s_2}^{s_3 s_4}
	) 
	\overset{\text{def}}{=} \vec{C}_\text{same}^q
	  \in  \mathrm{M}_{n^2}(\mathbb{C}),
\,\,
	(\mathrm{Tr}^\omega_\mathfrak{B}(C_\text{neg-same}^\text{over-to-higher})_{s_1 s_2}^{s_3 s_4}
	) 
	\overset{\text{def}}{=} (\vec{C}_\text{same}^q)^{-1}
	  \in  \mathrm{M}_{n^2}(\mathbb{C}),
\\
	(\mathrm{Tr}^\omega_\mathfrak{B}(C_\text{pos-same}^\text{over-to-higher})_{s_1 s_2}^{s_3 s_4}
	)
	\overset{\text{def}}{=}  
	\vec{C}_\text{same}^q
	  \in  \mathrm{M}_{n^2}(\mathbb{C}),
\,\,
	(\mathrm{Tr}^\omega_\mathfrak{B}(C_\text{neg-same}^\text{over-to-lower})_{s_1 s_2}^{s_3 s_4}
	)
	\overset{\text{def}}{=}  
	(\vec{C}_\text{same}^q)^{-1}
	  \in  \mathrm{M}_{n^2}(\mathbb{C}),
\\
	(\mathrm{Tr}^\omega_\mathfrak{B}(C_\text{neg-opp}^\text{over-to-lower})_{s_1 s_2}^{s_3 s_4}
	) 
	\overset{\text{def}}{=} \vec{C}_\text{opp}^q
	  \in  \mathrm{M}_{n^2}(\mathbb{C}),
\,\,
	(\mathrm{Tr}^\omega_\mathfrak{B}(C_\text{pos-opp}^\text{over-to-higher})_{s_1 s_2}^{s_3 s_4}
	) 
	\overset{\text{def}}{=} (\vec{C}_\text{opp}^q)^{-1}
	  \in  \mathrm{M}_{n^2}(\mathbb{C}),
\\
	(\mathrm{Tr}^\omega_\mathfrak{B}(C_\text{neg-opp}^\text{over-to-higher})_{s_1 s_2}^{s_3 s_4}
	)
	\overset{\text{def}}{=}  
	\vec{C}_\text{opp}^q
	  \in  \mathrm{M}_{n^2}(\mathbb{C}),
\,\,
	(\mathrm{Tr}^\omega_\mathfrak{B}(C_\text{pos-opp}^\text{over-to-lower})_{s_1 s_2}^{s_3 s_4}
	)
	\overset{\text{def}}{=}  
	(\vec{C}_\text{opp}^q)^{-1}
	  \in  \mathrm{M}_{n^2}(\mathbb{C}).
\end{gather*}
\end{definition}

\begin{remark}  \label{rem:comment-on-coribbon-element}  In the case $n=2$, these formulas agree with those in \cite[Lemma 22]{BonahonGT11}, for the underlying un-oriented link, taking $A = \omega^{-2}$, $\alpha = -\omega^{-5}$ and $\beta = \omega^{-1}$ (see \cite[Proposition 26]{BonahonGT11}).  In particular, as another indication of the un-oriented nature of $\mathrm{SL}_2$, when $n=2$ the two matrices $\vec{C}^q_\mathrm{same}$ and $\vec{C}^q_\mathrm{opp}$ are identical for all $q$ and $\omega$ (we saw above that, for general $n$, this is only true for $q = \omega=1$).  This can be explained conceptually as follows.  For any $n$, the vector spaces $V$ and $V^*$ can be given the structure of a \textit{right $\mathrm{SL}_n^q$-comodule}; see Appendix \ref{sec:proof-of-reshetikhin-turaev-invariant}.  When $n=2$, the linear isomorphism $V \to V^*$, $e^1 \mapsto -q e_2^*$, $e^2 \mapsto e_1^*$ is an isomorphism of right $\mathrm{SL}_2^q$-comodules, but this is not true for $n > 2$.  This is why, loosely speaking, the choices above for the bases $\beta_{V, V}$, $\beta_{V^*, V^*}$, $\beta_{V^*, V}$ and $\beta_{V, V^*}$ are `preferred'.  

The linear isomorphisms $\bar{c}_{V, V}$, $\bar{c}_{V^*, V^*}$, $\bar{c}_{V^*, V}$ and $\bar{c}_{V^*, V}$ arise naturally as braidings in the ribbon category of finite-dimensional right $\mathrm{SL}_n^q$-comodules, where the categorical coribbon element $\bar{\zeta}_{\mathrm{SL}_n^q}$ is essentially given by Definition \ref{def:ribbon-element}; see Appendix \ref{sec:proof-of-reshetikhin-turaev-invariant}.  Possibly of interest, we have implicitly taken a `symmetric' duality, which is more fitting for the current  setting.  In the notation of \cite{Kassel95} (compare \cite[Chapter XIV.2, Example 1]{Kassel95}), these symmetric dualities $b_V$ and $d_V$ are related to the usual ones by $b_V= \nu b_V^\textnormal{Kassel}$ and  $d_V= \nu^{-1} d_V^\textnormal{Kassel}$, where $\nu = q^{(1-n)/2n} = \bar{\sigma}_n q^{(n-1)/2}$; see Appendix \ref{sec:proof-of-reshetikhin-turaev-invariant}.  Note that $\nu$ is the bottom left entry of $\vec{U}^q$, see Definition \ref{def:ribbon-element}.
\end{remark}

\begin{figure}[htb]
     \centering
     \begin{subfigure}{0.24\textwidth}
         \centering
         \includegraphics[width=.75\textwidth]{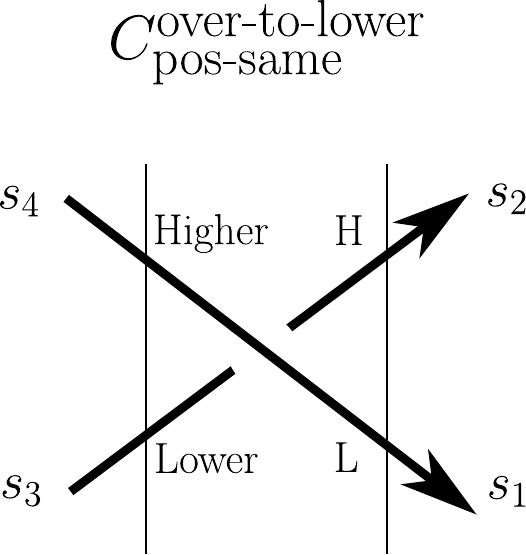}
         \caption{positive crossing, over strand higher to lower}
         \label{fig:cross-pos-same-over-to-lower}
     \end{subfigure}     
\hfill
     \begin{subfigure}{0.24\textwidth}
         \centering
         \includegraphics[width=.75\textwidth]{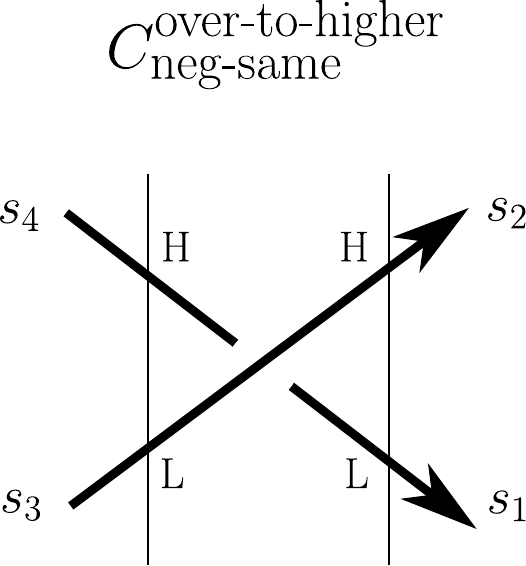}
         \caption{negative crossing, over strand lower to higher}
         \label{fig:cross-neg-same-over-to-higher}
     \end{subfigure}
\hfill
     \begin{subfigure}{0.24\textwidth}
         \centering
         \includegraphics[width=.75\textwidth]{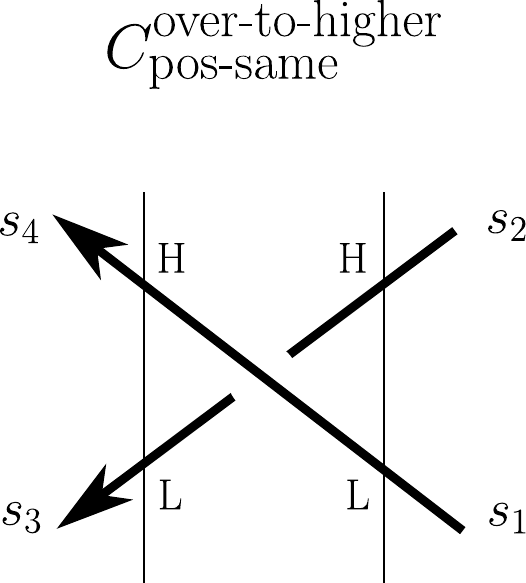}
         \caption{positive crossing, over strand lower to higher}
         \label{fig:cross-pos-same-over-to-higher}
     \end{subfigure}
\hfill
     \begin{subfigure}{0.24\textwidth}
         \centering
         \includegraphics[width=.75\textwidth]{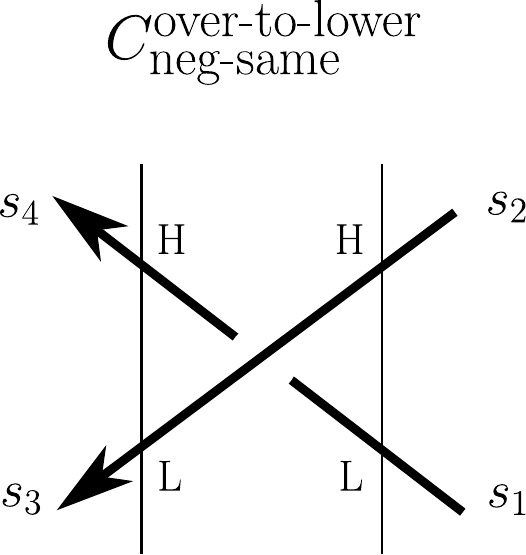}
         \caption{negative crossing, over strand higher to lower}
         \label{fig:cross-neg-same-over-to-lower}
     \end{subfigure}
     	\caption{Same direction crossings.}
        \label{fig:same-direction-crossings}
\end{figure}

\begin{figure}[htb]
     \centering
     \begin{subfigure}{0.24\textwidth}
         \centering
         \includegraphics[width=.75\textwidth]{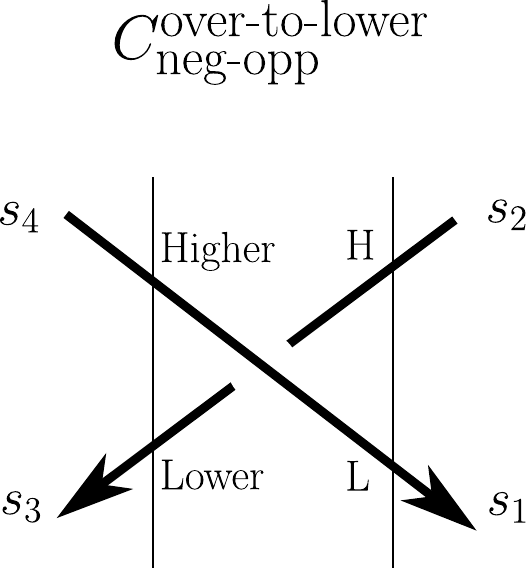}
         \caption{negative crossing, over strand higher to lower}
         \label{fig:cross-neg-opp-over-to-lower}
     \end{subfigure}     
\hfill
     \begin{subfigure}{0.24\textwidth}
         \centering
         \includegraphics[width=.75\textwidth]{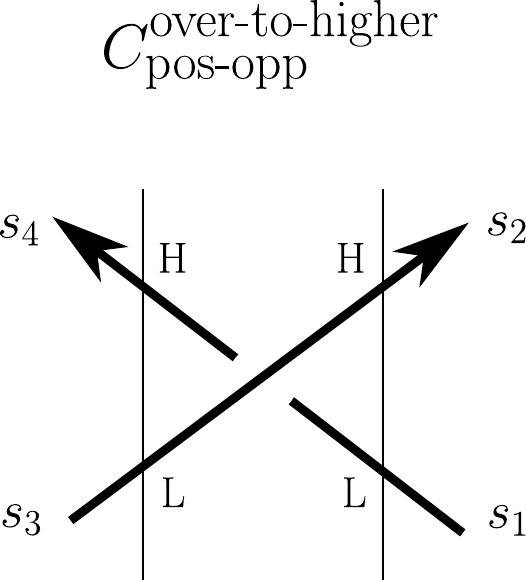}
         \caption{positive crossing, over strand lower to higher}
         \label{fig:cross-pos-opp-over-to-higher}
     \end{subfigure}
\hfill
     \begin{subfigure}{0.24\textwidth}
         \centering
         \includegraphics[width=.75\textwidth]{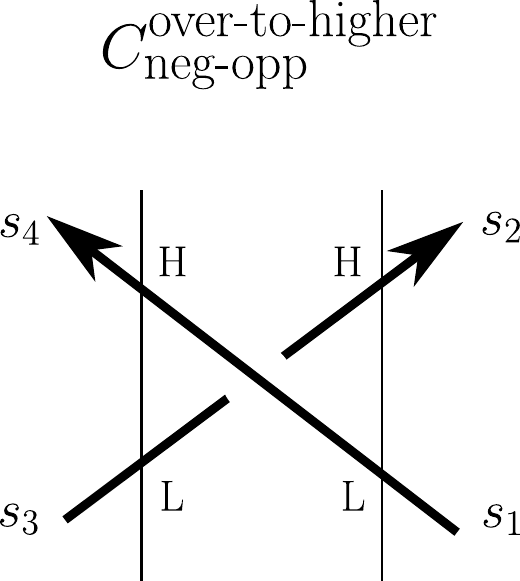}
         \caption{negative crossing, over strand lower to higher}
         \label{fig:cross-neg-opp-over-to-higher}
     \end{subfigure}
\hfill
     \begin{subfigure}{0.24\textwidth}
         \centering
         \includegraphics[width=.75\textwidth]{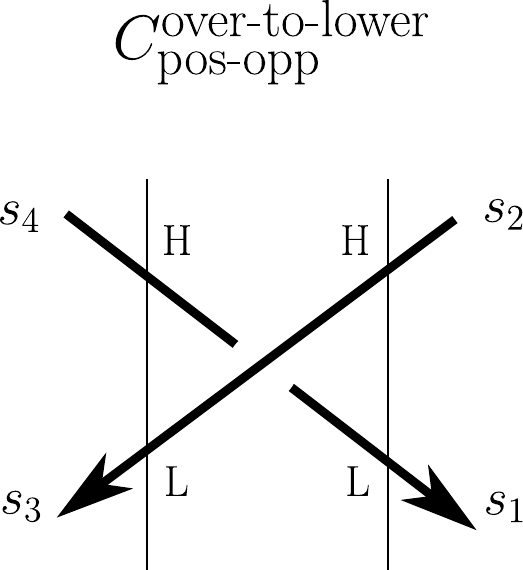}
         \caption{positive crossing, over strand higher to lower}
         \label{fig:cross-pos-opp-over-to-lower}
     \end{subfigure}
     	\caption{Opposite direction crossings.}
        \label{fig:opposite-direction-crossings}
\end{figure}

\subsubsection{Trivial strand}
\label{sssec:trivial-strand}
	
Consider a single strand crossing from one boundary edge of the biangle to the other boundary edge, as shown in Figure \ref{fig:single-strand-crossing-the-biangle}.  Note that the height of the strand with respect to the $(0, 1)$ component does not play a role in this particular case. 

This trivial strand corresponds to the $n \times n$ identity matrix.  That is, define for each pair of states $s_1, s_2 \in \{  1, 2, \dots, n  \}$ the complex number $\mathrm{Tr}^\omega_\mathfrak{B}(I)_{s_1}^{s_2}$ by the matrix equation
\begin{equation*}
\label{eq:skein-relation-last-third}
	( \mathrm{Tr}^\omega_\mathfrak{B}(I)_{s_1}^{s_2} ) \overset{\text{def}}{=} \vec{Id}_{n}
	  \in \mathrm{M}_{n}(\mathbb{C}).
\end{equation*}

\begin{figure}[htb]
	\centering
	\includegraphics[scale=.4]{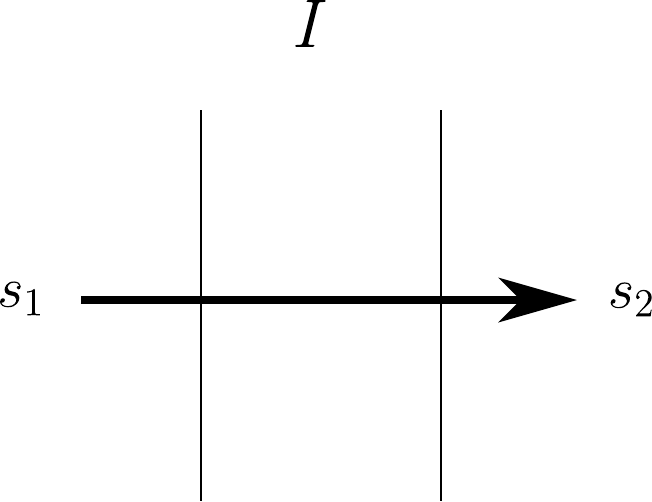}
	\caption{Trivial strand.}
	\label{fig:single-strand-crossing-the-biangle}
\end{figure}

\subsubsection{Kinks and the biangle quantum trace map}
\label{sssec:kinks-and-the-Reshetikhin-Turaev-invariant}
	
Up to this point, we have assigned complex numbers $\mathrm{Tr}_\mathfrak{B}^\omega(K,s)$ to a handful of small stated blackboard-framed oriented links $(K,s)$ (but not their isotopy classes) in the parametrized (see the beginning of \S \ref{ssec:biangles-and-the-reshetikhin-turaev-invariant}) thickened biangle $\mathfrak{B} \times (0, 1)$.  We now provide the general assignment.  

Assume first that the blackboard-framed link $K$ has no kinks.
For the moment, we also assume that higher points of $K \cap (\left\{0\right\} \times \mathbb{R} \times (0, 1))$ (resp. $K \cap (\left\{1\right\} \times \mathbb{R} \times (0, 1))$) have larger second coordinates with respect to the parametrization of $\mathfrak{B} \times (0, 1) \cong [0, 1] \times \mathbb{R} \times (0, 1)$; see, for example, Figures \ref{fig:decreasing-U-turns}, \ref{fig:increasing-U-turns}, \ref{fig:same-direction-crossings}, \ref{fig:opposite-direction-crossings}.    (Recall, in particular, Remark \ref{rem:higher-lower}.)
Fixing endpoints, isotope (without introducing kinks) $K$ into an arbitrary \textit{bridge position}.  This means that, after isotopy, there exists a partition $0=x_0<x_1<\dots<x_p=1$ of $[0,1]$ such that $K \cap ([x_i, x_{i+1}] \times \mathbb{R} \times (0, 1)) = K_i = \cup_{\ell} K_{i,\ell}$ is a disjoint union of links $K_{i,\ell}$ satisfying:
\begin{itemize}
	\item  the higher points of $K_i \cap (\left\{ x_i \right\} \times \mathbb{R} \times (0, 1))$ have a larger second coordinate;
	\item  for each $i$, there is a single $\ell$ such that $K_{i,\ell}$ is either a U-turn (Figures \ref{fig:decreasing-U-turns}, \ref{fig:increasing-U-turns}) or a crossing (Figures \ref{fig:same-direction-crossings}, \ref{fig:opposite-direction-crossings}),  and the other $K_{i,\ell}$'s are trivial (Figure \ref{fig:single-strand-crossing-the-biangle} and its inverse, with opposite orientation).
\end{itemize}
(Compare \cite[\S 4, proof of Lemma 15]{BonahonGT11}.)  
For each $i=0,1,\dots,p-1$ and any state $s_i$ on $K_i$, define 
\begin{equation*}
     \mathrm{Tr}_\mathfrak{B}^\omega(K_i,s_i) \overset{\text{def}}{=} \prod_{\ell} \mathrm{Tr}_\mathfrak{B}^\omega(K_{i,\ell}, s_i|_{K_{i,\ell}}) \in \mathbb{C},     
\end{equation*}
      see \S \ref{sssec:U-turns}, \ref{sssec:crossings}, \ref{sssec:trivial-strand}.  We then define the number     
\begin{equation*}
	\mathrm{Tr}_\mathfrak{B}^\omega(K,s) \overset{\text{def}}{=}
	\sum_{\textnormal{compatible } s_0, s_1, \dots, s_{p-1}}
	\prod_{i=0,1,\dots,p-1} \mathrm{Tr}_\mathfrak{B}^\omega(K_i,s_i)
	  \in  \mathbb{C}.     
\end{equation*}
Here, the states $s_i$ are compatible (Definition \ref{def:compatible}) if $s|_{K \cap (\left\{0\right\} \times \mathbb{R} \times (0, 1))}=s_0|_{K_0 \cap (\left\{0\right\} \times \mathbb{R} \times (0, 1))}$ and \\ $s_i|_{K_i \cap (\left\{ x_{i+1}\right\} \times \mathbb{R} \times (0, 1))}= s_{i+1}|_{K_{i+1} \cap (\left\{ x_{i+1}\right\} \times \mathbb{R} \times (0, 1))}$ and $s_{p-1}|_{K_{p-1} \cap (\left\{ 1\right\} \times \mathbb{R} \times (0, 1))}=s|_{K \cap (\left\{ 1 \right\} \times \mathbb{R} \times (0, 1))}$.

Define $\mathrm{Tr}^\omega_\mathfrak{B}(K, s) \in \mathbb{C}$ for a general stated blackboard-framed oriented link $(K,s)$, possibly with kinks (\S \ref{sec:framed-oriented-links}), as follows.    By isotopy, sliding the link horizontally along the boundary of $\mathfrak{B} \times (0,1)$ while preserving blackboard framing throughout, we can arrange that the boundary $\partial K$ satisfies the `higher point, larger second coordinate' condition assumed just above.  (Note this sliding might introduce or remove kinks.)  Let $K^\prime$ be the  blackboard-framed link without kinks obtained by removing the kinks of $K$ (more precisely, pulling tight by homotopy--not isotopy--the kinks in the un-framed link underlying $K$), and further isotoped into a bridge position as above.  Then we have defined a complex number $\mathrm{Tr}^\omega_\mathfrak{B}(K^\prime, s) \in \mathbb{C}$.  Define $\mathrm{Tr}^\omega_\mathfrak{B}(K, s)\in\mathbb{C}$ by modifying $\mathrm{Tr}^\omega_\mathfrak{B}(K^\prime, s)$ according to the un-kinking `skein relations' shown in Figures \ref{fig:positive-kink-skein-relation} and \ref{fig:negative-kink-skein-relation}.    (For example, in the case $n=3$, $\bar{\zeta}_3 = +q^{-8/3}$.)  More precisely, for $P$ (resp. $N$) the number of positive (resp. negative) kinks of $K$,
\begin{equation*}
     \mathrm{Tr}^\omega_\mathfrak{B}(K, s)\overset{\text{def}}{=}(\bar{\zeta}_n)^{P-N} \mathrm{Tr}^\omega_\mathfrak{B}(K^\prime, s) \in\mathbb{C}.     
\end{equation*}

\begin{proposition}[$\mathrm{SL}_n$-biangle quantum trace map]
\label{prop:reshetikhin-turaev}
	Let $\mathfrak{B}\times (0,1)$ be the (non-parametrized) thickened biangle.  Then the construction of the current section determines a function
	\begin{equation*}
		\mathrm{Tr}^\omega_\mathfrak{B}  :
		\left\{  
			\textnormal{stated framed oriented links } (K,s) \textnormal{ in } \mathfrak{B} \times (0, 1)
		\right\} 
		\to 
		\mathbb{C},
	\end{equation*}
satisfying the following properties:  
\begin{enumerate}[\normalfont(A)]
	\item  the number $\mathrm{Tr}_\mathfrak{B}^\omega(K, s) \in \mathbb{C}$ is invariant under isotopy of stated framed oriented links;
	\item  the $\mathrm{SL}_n$-HOMFLYPT skein relation (Figure {\upshape\ref{fig:HOMFLYPT-relation}}) holds;
	\item the $\mathrm{SL}_n$-quantum unknot and framing relations (Figures {\upshape\ref{fig:unknot-relation}} and {\upshape\ref{fig:framing-skein-relations}}) hold.  
\end{enumerate}
Moreover, this invariant satisfies the Multiplication Property 
\begin{equation*}
	\label{eq:quantum-trace-multiplicative-propertybigon}
		\mathrm{Tr}^\omega_\mathfrak{B}(K, s) = \prod_{j=1}^\ell \mathrm{Tr}^\omega_\mathfrak{B}(K_j, s_j)
		  \in  
		\mathbb{C},     
	\end{equation*}
     in the case where $K$ is the disjoint union of links $K_j$ having mutually non-overlapping heights (note the order of multiplication is immaterial, in contrast to Complement {\upshape\ref{compl}}), as well as 	
 the State Sum Property     
\begin{equation*}
	\mathrm{Tr}_\mathfrak{B}^\omega(K,s)=
	\sum_{\textnormal{compatible } s_1, s_2}
	\mathrm{Tr}_{\mathfrak{B}_1}^\omega(K_1, s_1) \mathrm{Tr}_{\mathfrak{B}_2}^\omega(K_2, s_2)
	  \in  \mathbb{C},     
\end{equation*}
where $K_1$ and $K_2$ are the links obtained by cutting the biangle $\mathfrak{B}$ into two biangles $\mathfrak{B}_1$ and $\mathfrak{B}_2$.
\end{proposition}

For the proof of part (A) of the proposition, namely the isotopy invariance, we refer the reader to Appendix \ref{sec:proof-of-reshetikhin-turaev-invariant}, which makes use of a `symmetric' specialization of the Reshetikhin--Turaev invariant (see Figure \ref{fig:DandG}).  The skein relations, parts (B) and (C), are local calculations; see \S \ref{sec:HOMFLYPT-skein-relation}.

Note that (assuming isotopy invariance) the State Sum Property is immediate from the construction.  It suffices to establish the Multiplication Property when $\ell=2$ (assuming $K_1$ lies below $K_2$, say).  This follows from the State Sum Property and the definition for the trivial strand (Figure \ref{fig:single-strand-crossing-the-biangle}) by choosing a parametrization of $\mathfrak{B}\times(0,1)\cong [0, 1] \times \mathbb{R} \times (0, 1)$, the partition $0<1/2<1$ of $[0,1]$, and a position for the links such that $(K_1,s_1)\subset[0,1]\times\mathbb{R}\times(0,1/2)$ (resp. $(K_2,s_2)\subset[0,1]\times\mathbb{R}\times(1/2,1)$) with only trivial strands in $[1/2,1]\times\mathbb{R}\times(0,1/2)$ (resp. $[0,1/2]\times\mathbb{R}\times(1/2,1)$).  (Compare \cite[Lemma 19]{BonahonGT11}.)

We note, in particular, that the biangle quantum trace map is independent of the choice of parametrization of the biangle $\mathfrak{B}$, discussed at the beginning of \S \ref{ssec:biangles-and-the-reshetikhin-turaev-invariant}.   Indeed, this property follows either from the properties of the symmetric specialization of the Reshetikhin--Turaev invariant (Appendix \ref{sec:proof-of-reshetikhin-turaev-invariant}), or directly from the symmetries of the matrices corresponding to the links displayed in Figures \ref{fig:decreasing-U-turns}, \ref{fig:increasing-U-turns}, \ref{fig:same-direction-crossings}, \ref{fig:opposite-direction-crossings}, \ref{fig:single-strand-crossing-the-biangle}.  

\begin{remark}\label{rem:goingtowebs}
	The Reshetikhin--Turaev invariant can be defined more generally for \textit{ribbon graphs}, including so-called \textit{webs}  \cite{KuperbergCommMathPhys96,SikoraAlgGeomTop05}.  In \cite{BonahonGT11},  the $\mathrm{SL}_2$-quantum trace is defined by splitting the edges of the ideal triangulation $\lambda$ to form biangles and then ``pushing all of the complexities of the link into the biangles,'' \cite[p.1596]{BonahonGT11} leaving only flat arcs lying over the triangles.  In order to construct the $\mathrm{SL}_n$-quantum trace for webs, one can perform the same procedure, in particular pushing all of the vertices of the web into the biangles.  Then, the Reshetikhin--Turaev invariant can be applied to the webs in the biangles and (as we will see in the next section) the Fock--Goncharov matrices can be associated to the arcs lying over the triangles. This is essentially the strategy employed in \cite{KimArxiv20} in the case $n=3$.  
\end{remark}
	
\subsection{Definition of the \texorpdfstring{$\mathrm{SL}_n$}{SLn(C)}-quantum trace polynomials}
\label{sec:def-of-quantum-trace}

Our construction of the quantum trace map in the  general $n$ case will follow exactly the same procedure as explained in \cite[\S 3.4-6]{BonahonGT11} for the case $n=2$, where our Proposition \ref{prop:reshetikhin-turaev}  plays the role of Proposition 13 in \cite[\S 4]{BonahonGT11}.  It remains to discuss how Property (2)(a) of Theorem 11 in \cite[\S 3.4]{BonahonGT11}, concerning the values of the quantum traces for arcs in triangles (see Remark \ref{rem:goingtowebs}), generalizes to our setting, which we have essentially already done.  (In particular, we follow the `ordered lower to higher, multiply left to right' convention of \cite{BonahonGT11} for ordering the non-commutative variables associated to different arc components lying over a single triangle.)  After this, the rest of the construction is identical to \cite[\S 6, pp. 1600-1601]{BonahonGT11}, where the quantum trace for a general triangulated surface $(\mathfrak{S},\lambda)$ is defined as a state sum over the triangles of the ideal triangulation $\lambda$ of $\mathfrak{S}$.   We proceed below to spell all of this out in greater detail.  
Now is the point where we require the choice of square root $\omega^{1/2}$; see Remark \ref{rem:choice-of-square-root}. 

\subsubsection{Arcs in a triangle}  
\label{sssec:arcs-in-a-triangle}

Generalizing Property (2)(a) of Theorem 11 in \cite[\S 3.4]{BonahonGT11} to the case of general $n$ is accomplished by using the quantum left and right matrices $\vec{L}^\omega$ and $\vec{R}^\omega$, with coefficients in the Fock--Goncharov quantum torus $\mathscr{T}_n^\omega(\mathfrak{T})$ for a triangle $\mathfrak{T}$ in the ideal triangulation $\lambda$, appearing earlier in  Theorem \ref{thm:first-theorem}.  

Consider a single extended left-moving or right-moving arc crossing the triangle between two distinct boundary edges, such as those shown in Figure \ref{fig:left-and-right-quantum-matrices}; see \S \ref{sssec:quantum-left-and-right-matrices}.  
For example, in the case $n=3$, these extended left-moving and right-moving arcs are displayed  in Figures \ref{fig:quantum-left-matrix} and \ref{fig:quantum-right-matrix}.  Using the notation from these figures, the $n=3$ quantum left and right matrices $\vec{L}^\omega = \vec{L}^\omega(W, Z, W^\prime, Z^\prime, X)$ and $\vec{R}^\omega = \vec{R}^\omega(W, Z, W^\prime, Z^\prime, X)$ are given by 
\begin{equation*}
\label{eq:quantum-left-matrix-n=3}
	\vec{L}^\omega(W,Z,W^\prime,Z^\prime,X) \overset{\text{def}}{=} 
	\left(\begin{smallmatrix}
[D_L^{-1/3} W Z X Z^\prime W^\prime]
&[D_L^{-1/3} W Z X W^\prime]+[D_L^{-1/3} W Z W^\prime]
&[D_L^{-1/3} W Z]
\\0
&[D_L^{-1/3} Z W^\prime]
&[D_L^{-1/3} Z]
\\0
&0
&[D_L^{-1/3}]
	\end{smallmatrix}\right)\in\mathrm{SL}_3^q(\mathscr{T}^\omega_3(\mathfrak{T})),
\end{equation*}
where $D_L^{-1/3}$ in $\mathscr{T}_3^\omega(\mathfrak{T})$ is defined by
\begin{equation*}
	D_L^{-1/3} \overset{\text{def}}{=} W^{-1/3} Z^{-2/3} X^{-1/3} Z^{\prime-1/3} W^{\prime-2/3}\in\mathscr{T}_3^\omega(\mathfrak{T}),
\end{equation*}
and 
\begin{equation*}
\label{eq:quantum-right-matrix-n=3}
	\vec{R}^\omega(W,Z,W^\prime,Z^\prime,X) \overset{\text{def}}{=} 
	\left(\begin{smallmatrix}
[D_R^{-1/3} W^\prime Z^\prime Z W]
&0
&0
\\ [D_R^{-1/3} Z^\prime Z W]
&[D_R^{-1/3} Z^\prime W]
&0
\\ [D_R^{-1/3} Z W]
&[D_R^{-1/3} W] + [D_R^{-1/3} X^{-1} W]
&[D_R^{-1/3} X^{-1}]
\end{smallmatrix}\right)\in\mathrm{SL}_3^q(\mathscr{T}^\omega_3(\mathfrak{T})),
\end{equation*}
where $D_R^{-1/3}$ in $\mathscr{T}_3^\omega(\mathfrak{T})$ is defined by
\begin{equation*}
	D_R^{-1/3} \overset{\text{def}}{=} W^{\prime-1/3} Z^{\prime-2/3} X^{1/3} Z^{-1/3} W^{-2/3}\in\mathscr{T}_3^\omega(\mathfrak{T}).
\end{equation*}
(This is the result of multiplying out the snake-move matrices in the case $n=3$; compare  \S \ref{sssec:n=3-example}.)

\begin{definition}
For general $n$, define for each pair of states $s_1, s_2 \in \{  1, 2, \dots, n  \}$ two elements in the quantum torus $\mathscr{T}_n^\omega(\mathfrak{T})$
\begin{equation*}
	\mathrm{Tr}^\omega_\mathfrak{T}(L)_{s_1}^{s_2}, \,\, \mathrm{Tr}^\omega_\mathfrak{T}(R)_{s_1}^{s_2}  \in \mathscr{T}_n^\omega(\mathfrak{T}),
\end{equation*}
by the matrix equations (see \S \ref{ssec:matrix-conventions})
\begin{gather*}
	(\mathrm{Tr}^\omega_\mathfrak{T}(L)_{s_1}^{s_2}) \overset{\text{def}}{=} \vec{L}^\omega
	  \in  \mathrm{SL}_n^q(\mathscr{T}_n^\omega(\mathfrak{T}))
\subset	
	\mathrm{M}_n(\mathscr{T}_n^\omega(\mathfrak{T})),
\\
	(\mathrm{Tr}^\omega_\mathfrak{T}(R)_{s_1}^{s_2}) \overset{\text{def}}{=} \vec{R}^\omega
	  \in  
	\mathrm{SL}_n^q(\mathscr{T}_n^\omega(\mathfrak{T}))
	\subset \mathrm{M}_n(\mathscr{T}_n^\omega(\mathfrak{T})).
\end{gather*} 
\end{definition}

\begin{remark}
\label{rem:choice-of-square-root}
	In the above matrices, we recall that the square brackets surrounding the monomials indicate that we are taking the Weyl quantum ordering, which depends on the quiver defining the $q$-commutation relations in the Fock--Goncharov quantum torus $\mathscr{T}_n^\omega(\mathfrak{T})$; see \S \ref{sssec:weyl-ordering} and Figure \ref{fig:Fg-quiver}.  It is here that the choice of $\omega^{1/2}$ enters into the construction.

In Theorem \ref{thm:first-theorem}, we saw that the quantum left and right matrices $\vec{L}^\omega$ and $\vec{R}^\omega$ are points of the quantum special linear group $\mathrm{SL}_n^q$.  Note that, in order for these matrices to satisfy even just the relations required to be in the quantum matrix algebra $\mathrm{M}_n^q$, they had to be normalized by `dividing out' their determinants.  For example, the above $n=3$ version of the matrix $\vec{L}^\omega$ would not satisfy the $q$-commutation relations required to be a point of $\mathrm{M}_3^q$ if we had instead put $D_L=1$.  
\end{remark}

\begin{figure}[htb]
     \centering
     \begin{subfigure}{0.49\textwidth}
         \centering
         \includegraphics[width=.49\textwidth]{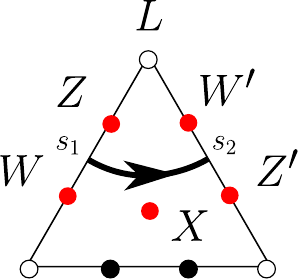}
         \caption{Left}
         \label{fig:quantum-left-matrix}
     \end{subfigure}     
\hfill
     \begin{subfigure}{0.49\textwidth}
         \centering
         \includegraphics[width=.49\textwidth]{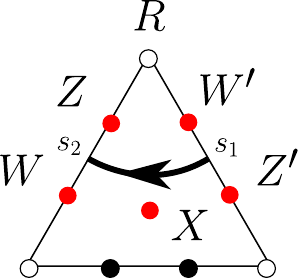}
         \caption{Right}
         \label{fig:quantum-right-matrix}
     \end{subfigure}
     	\caption{Quantum left and right matrices for $n=3$.}
        \label{fig:quantum-left-and-right-matrices}
\end{figure}

\subsubsection{Good position of a link}
\label{sssec:good-position}

Fix an ideal triangulation $\lambda$ of $\mathfrak{S}$.  Form the corresponding \textit{split ideal triangulation} $\widehat{\lambda}$ of $\mathfrak{S}$ by `splitting' each edge $E$ of $\lambda$ into a biangle $\mathfrak{B}_E$.  (Compare \cite[\S 5]{BonahonGT11}.)  See Figure \ref{fig:split-ideal-triangulation}.  For notational simplicity, we identify the triangles of $\lambda$ with the triangles of $\widehat{\lambda}$. 
A framed link $K$ is said to be in \textit{good position} with respect to the split ideal triangulation $\widehat{\lambda}$ if:
\begin{itemize}
	\item  the link $K$ is transverse to $\widehat{E} \times (0, 1)$ for each edge $\widehat{E}$ of $\widehat{\lambda}$;
	\item  for each triangle $\mathfrak{T}_j$ of $\widehat{\lambda}$, the intersection $K \cap (\mathfrak{T}_j \times (0, 1)) = K_j = \cup_{\ell} K_{j,\ell}$ consists of a disjoint union of arcs $K_{j,\ell}$, each connecting distinct sides of $\mathfrak{T}_j \times (0, 1)$;
	\item  the arcs $K_{j,\ell}$ are `flat', in the sense that each arc has a constant height with respect to the vertical coordinate of $\mathfrak{T}_j \times (0, 1)$ and has the blackboard framing.  
\end{itemize}
(Compare \cite[Lemma 23]{BonahonGT11}.)  In particular, when in good position, all of the `complexity' of the link $K$ resides in the thickened biangles $\cup_i (\mathfrak{B}_i \times (0, 1))$.  
A \textit{good position move} between framed oriented links $K_1$ and $K_2$ in good position is one of the oriented versions of the local moves depicted in Figures 15-19 in \cite[\S 5]{BonahonGT11}.  In the present article, these moves are displayed in Figures \ref{fig:Move-I}, \ref{fig:Move-Ib}, \ref{fig:Move-Ic}, \ref{fig:Move-Id} (Move I); Figures \ref{fig:Move-II}, \ref{fig:Move-IIbcorrect} (Move II); Figures \ref{fig:Move-III}, \ref{fig:Move-IIIb}, \ref{fig:Move-IIIc}, \ref{fig:Move-IIId} (Move III); Figures \ref{fig:Move-IV}, \ref{fig:Move-IVb}, \ref{fig:Move-IVc}, \ref{fig:Move-IVd} (Move IV); and, Figure \ref{fig:Move-V} (Move V).  

The proof of the following fact is the same as the proof of the corresponding un-oriented version (\cite[Lemma 24]{BonahonGT11}).

\begin{fact}
	Any framed oriented link $K$ has a good position with respect to the split ideal triangulation $\widehat{\lambda}$.  Any two isotopic framed oriented links $K_1$ and $K_2$  in good position  are related by a sequence of good position moves and their inverses (and isotopies through framed oriented links in good position).  \qed
\end{fact}

\begin{figure}[htb]
	\centering
	\includegraphics[scale=.65]{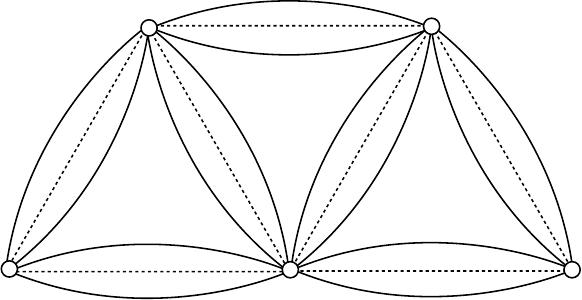}
	\caption{Split ideal triangulation}
	\label{fig:split-ideal-triangulation}
\end{figure}

\subsubsection{General case}
\label{sssec:general-case}

Let $K$ be any  blackboard-framed oriented link (recall that a framed link can be isotoped to have the blackboard framing by possibly introducing kinks).  By \S \ref{sssec:good-position}, we may assume that $K$ is in good position with respect to the split ideal triangulation $\widehat{\lambda}$.  Let the biangles of $\widehat{\lambda}$ be denoted $\mathfrak{B}_i$ for $i=1,2,\dots,p$ and let the triangles be denoted $\mathfrak{T}_j$ for $j=1,2,\dots,m$.  Put $L_i = K \cap (\mathfrak{B}_i \times (0, 1))$ and $K_j = K \cap (\mathfrak{T}_j \times (0, 1))$.  By definition of good position, $K_j = K_{j,1} \cup K_{j,2} \cup \dots \cup K_{j,{\ell_j}}$ where each component $K_{j,\ell}$ is a flat oriented arc connecting distinct sides of $\mathfrak{T}_j \times (0, 1)$.  Choose indices such  that $K_{j,\ell}$ lies below $K_{j,{\ell+1}}$ with respect to the height order of $\mathfrak{T}_j \times (0, 1)$.  
For any state $s_j$ on $K_j$,  by \S \ref{sssec:arcs-in-a-triangle}, the triangle quantum torus elements $\mathrm{Tr}_\mathfrak{T}^\omega(K_{j,\ell}, s_j|_{K_{j,\ell}}) \in \mathscr{T}_n^\omega(\mathfrak{T}_j)$ are defined for $\ell=1,2,\dots,\ell_j$.  Assign such a quantum torus element to the  stated link $(K_j,s_j)$ by
\begin{equation*}
	\mathrm{Tr}_{\mathfrak{T}_j}^\omega(K_j, s_j) \overset{\text{def}}{=}
	\mathrm{Tr}_{\mathfrak{T}_j}^\omega(K_{j,1}, s_j|_{K_{j,1}})
	\mathrm{Tr}_{\mathfrak{T}_j}^\omega(K_{j,2}, s_j|_{K_{j,2}}) 
	\cdots 
	\mathrm{Tr}_{\mathfrak{T}_j}^\omega(K_{j,{\ell_j}}, s_j|_{K_{j,{\ell_j}}})
	  \in  \mathscr{T}_n^\omega(\mathfrak{T}_j).
\end{equation*}
Note, importantly, the order in which the non-commuting elements $\mathrm{Tr}_\mathfrak{T}^\omega(K_{j,\ell}, s_j|_{K_{j,\ell}})$ are multiplied, the convention being `ordered lower to higher, multiply left to right'.  
For any state $t_i$ on $L_i$, let the numbers $\mathrm{Tr}^\omega_{\mathfrak{B}_i}(L_i, t_i) \in \mathbb{C}$ be defined by Proposition \ref{prop:reshetikhin-turaev}.  

\begin{definition}
\label{def:main-definition-qtracemap}
	Let $(K,s)$ be a stated blackboard-framed oriented link in $\mathfrak{S}\times(0,1)$  in good position with respect to the split ideal triangulation $\widehat{\lambda}$.  The \textit{$\mathrm{SL}_n$-quantum trace polynomial} $\mathrm{Tr}_\lambda^\omega(K,s)$  is defined by
\begin{equation*}
	\mathrm{Tr}_\lambda^\omega(K,s) \overset{\text{def}}{=} 
	\sum_{\text{compatible }t_1,t_2,\dots,t_p,s_1,s_2,\dots,s_m}  \left(
	\prod_{i=1}^p \mathrm{Tr}^\omega_{\mathfrak{B}_i}(L_i, t_i)
	\right)
	\left( 
	\bigotimes_{j=1}^m \mathrm{Tr}^\omega_{\mathfrak{T}_j}(K_j, s_j)
	\right)  
	\in \bigotimes_{\textnormal{triangles } \mathfrak{T}_j } \mathscr{T}_n^\omega(\mathfrak{T}_j),
\end{equation*}
where the compatibility condition (Definition \ref{def:compatible}) for the states $t_i$ and $s_j$ with respect to the state $s$ and the split triangulation $\widehat{\lambda}$ is analogous to that in \S \ref{sssec:kinks-and-the-Reshetikhin-Turaev-invariant}.
(Compare \cite[\S 6, p. 1600-1601]{BonahonGT11}.)  Note  that the quantities $\mathrm{Tr}^\omega_{\mathfrak{T}_j}(K_j, s_j)$ commute in the tensor product, since they lie in different tensor factors $\mathscr{T}_n^\omega(\mathfrak{T}_j) \subset \bigotimes_\mathfrak{T} \mathscr{T}_n^\omega(\mathfrak{T})$.  
\end{definition}

This completes the construction of the $\mathrm{SL}_n$-quantum trace map  for links.  One would 
 still need to  show it is well-defined, that is, independent of the choice of good position (equivalently, independent of isotopy); see \S \ref{sec:computer-check-of-local-moves} for a proof in the case $n=3$.  
	
\subsection{Properties}
\label{ssec:properties}

Assuming isotopy invariance, we conclude this section with a few observations.  

The above state sum definition of the quantum trace polynomial  takes as input a stated framed oriented link $(K, s)$ and outputs an element of the tensor product $\bigotimes_{\mathfrak{T}} \mathscr{T}^\omega_n(\mathfrak{T})$.  We had indicated earlier (Theorem \ref{thm:second-theorem}) that the image should lie in the Fock--Goncharov quantum torus sub-algebra $\mathscr{T}^\omega_n(\lambda) \subset \bigotimes_{\mathfrak{T}} \mathscr{T}^\omega_n(\mathfrak{T})$; see \S \ref{ssec:quantum-tori-for-surfaces}.  The following fact is justified by a straightforward analysis of the structure of the local U-turn, crossing, left, and right matrices.  (Compare \cite[Lemma 25]{BonahonGT11}.  See also \cite{KimArxiv20}, where a  stronger property is established.)

\begin{fact}
	The \textit{quantum trace polynomial} $\mathrm{Tr}_\lambda^\omega(K,s)$ is an element of $\mathscr{T}^\omega_n(\lambda)$.  \qed
\end{fact}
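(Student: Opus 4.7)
The plan is to show that each monomial in the state-sum expansion of $\mathrm{Tr}^\omega_\idealtriang(K,s)$ lies in the subalgebra $\mathscr{T}^\omega_n(\idealtriang) \subset \bigotimes_{\triang} \mathscr{T}^\omega_n(\triang)$. By Definition \ref{def:FG-algebra-of-the-whole-surface}, this amounts to verifying, for each internal edge $E$ between triangles $\triang$ and $\triang'$, that the exponents of the edge-generators at $E$ in $\mathscr{T}^\omega_n(\triang)$ pair with those in $\mathscr{T}^\omega_n(\triang')$ to form matched tensor-product generators.

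First I would unpack the state-sum formula of \S \ref{sssec:general-case}. Each term in the expansion is a scalar biangle product $\prod_i \mathrm{Tr}^\omega_{\biang_i}(L_i, t_i) \in \C$ times a pure tensor $\bigotimes_j \mathrm{Tr}^\omega_{\triang_j}(K_j, s_j)$ of triangle contributions. Since the biangle traces are scalars carrying no edge-generators, all edge-generator content at $E$ originates from the two adjacent triangle traces, arising via the edge matrices $\vec{M}^\mathrm{edge}$ inside the quantum left and right matrices.

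Next, I would track exponents locally at $E$. Using the diagonal formula $\vec{M}^\mathrm{edge}(\vec{Z})_{s,s} = \prod_j Z_j^{-j/n + [s \le j]}$, the exponent of each edge-generator at $E$ in $\mathscr{T}^\omega_n(\triang)$ depends only on the states at the endpoints of the half-edge-crossing arcs on the $\triang$-side of $\biang_E$, and analogously on the $\triang'$-side. The desired matching across $E$ is then enforced by the state-compatibility structure of the biangle trace matrices of \S \ref{ssec:biangles-and-the-reshetikhin-turaev-invariant}---the identity for trivial strands, the anti-diagonal $\vec{U}^q$ for U-turns, and the $R$-matrices $\vec{C}^q_\mathrm{same}, \vec{C}^q_\mathrm{opp}$ for crossings---which couple the states on the two sides of $\biang_E$ in precisely the way needed to produce tensor-product monomials of the form allowed as generators in Definition \ref{def:FG-algebra-of-the-whole-surface}.

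The main obstacle is the crossing configurations, where the $R$-matrices couple four states nontrivially and the exponent bookkeeping is more delicate than in the trivial-strand or U-turn cases; here Fact \ref{fact:braiding-matrices} restricts attention to a manageable list of state patterns, reducing the matching to a direct combinatorial check. A useful sanity check is the classical limit $q=\omega=1$: by Complement \ref{compl}, $\mathrm{Tr}^1_\idealtriang$ reproduces the Fock--Goncharov classical trace polynomial, which by Definition \ref{def:second-classical-trace} lies in $\mathscr{T}^1_n(\idealtriang) = \C[X_i^{\pm 1/n}]$, so the matching must at any rate hold classically, providing a consistency check for the quantum calculation. Closure of $\mathscr{T}^\omega_n(\idealtriang)$ under sums then promotes the monomial-level statement to the full result.
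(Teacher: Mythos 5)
Your approach is the one the paper intends: its entire proof of this Fact is the remark that it ``follows from a straightforward analysis of the forms of the local U-turn, crossing, left, and right matrices,'' with a pointer to the $n=2$ analogue in Bonahon--Wong, and your proposal is essentially that analysis spelled out. The reduction is right: the biangle factors are scalars, each monomial of a triangle factor carries edge-generator content at an edge $E$ only through the diagonal entries $\prod_j Z_j^{-j/n+[s\le j]}$ of the relevant $\vec{M}^{\mathrm{edge}}$, determined by the state $s$ at that endpoint, and one checks case by case over the local biangle configurations that the nonvanishing biangle entries force these exponents to assemble into generators of $\mathscr{T}^\omega_n(\idealtriang)$.

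One case is not covered by the mechanism as you state it. For a U-turn in $\biang_E$ both endpoints lie on the \emph{same} side of the biangle, so both half-edge-crossing contributions land in the same triangle $\triang$, and the opposite triangle $\triang'$ may contribute nothing at $E$; there is then no ``pairing across $E$'' to invoke. What is needed instead is outright cancellation on the $\triang$ side: the anti-diagonality of $\vec{U}^q$ forces the two states to satisfy $s_1+s_2=n+1$, and since the two half-arcs cross $E$ with opposite orientations the second one uses the reversed indexing $Z_j\leftrightarrow Z_{n-j}$, so the total exponent of $Z_j$ is $-1+[s_1\le j]+[\,n+1-s_1\le n-j\,]=0$. (This is visible in Move (I), where $\vec{L}^\omega\,\vec{U}^q\,\vec{R}^\omega$ collapses to the constant matrix $\vec{U}^q$.) Relatedly, for opposite-direction crossings the nonzero entries of $\vec{C}^q_{\mathrm{opp}}$ do \emph{not} preserve the multiset of states (for $n=3$ the $(13)$-row has a nonzero entry in the $(22)$-column), because the dual-basis relabeling $i\mapsto n-i+1$ is built into $\beta_{V^*,V}$ and $\beta_{V,V^*}$; the exponent matching still holds once the incoming/outgoing index reversal is accounted for, but it is not the naive same-direction bookkeeping. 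With these two cases repaired, your argument is the paper's.
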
 

\begin{proof}[Proof of (the general $n$ version of) Complement {\upshape\ref{compl}}]
The Classical Trace Property is by construction, comparing with the classical matrices of \S \ref{sec:classical-fock-goncharov-coordinates}.  The State Sum Property is immediate from the construction (assuming isotopy invariance).  The Multiplication Property follows from the State Sum Property by the corresponding property for biangles (Proposition \ref{prop:reshetikhin-turaev}), together with the definitions of good position and the quantities $\mathrm{Tr}^\omega_{\mathfrak{T}_j}(K_j, s_j)\in\mathscr{T}_n^\omega(\mathfrak{T}_j)$.  (Compare \cite[\S 6, p.1609]{BonahonGT11}.)  
\end{proof}

We remark that the quantum trace $\mathrm{Tr}^\omega_\lambda(K, s)$ of a stated framed oriented link $(K, s)$ can be thought of as a tensor having dimension equal to the number of boundary points $p_i \in \partial K$ of the link, each associated to a state $s_i$.  If the states $s_i$ are  partitioned into two groups $s_{i_1}, \dots, s_{i_\ell}$ and $s_{j_1}, \dots, s_{j_m}$, then the quantum trace of the link can be written as a matrix $(\mathrm{Tr}^\omega_\lambda(K, s)_{s_{i_1},\dots,s_{i_\ell}}^{s_{j_1},\dots,s_{j_m}})$ with coefficients in $\mathscr{T}_n^\omega(\lambda)$; see \S \ref{sec:computer-check-of-local-moves} for examples.  

\subsubsection{Skein relations}
\label{sec:HOMFLYPT-skein-relation}
	
We  justify parts (B)-(C) in (the general $n$ version of) Theorem \ref{thm:second-theorem}.  

The first skein relation is the well-known ($q$-evaluated) HOMFLYPT relation from knot theory \cite{FreydBullAmerMathSoc85,Przytycki87ProcAmercMathSoc}.  The $R$-matrices for the quantum group $\mathrm{SL}_n^q$ satisfy this skein relation.  For us, this relation appears with the normalization displayed in Figure \ref{fig:HOMFLYPT-relation}.  One can check from, say, Figures \ref{fig:cross-pos-same-over-to-lower}, \ref{fig:cross-neg-same-over-to-higher}, \ref{fig:single-strand-crossing-the-biangle} together with the definitions of \S \ref{sssec:crossings} and \S \ref{sssec:trivial-strand} that the quantum trace map $\mathrm{Tr}^\omega_\lambda$ satisfies this skein relation, translating to the matrix equation
\begin{equation*}
	q^{-1/n} \vec{C}^q_\text{same} 
	- q^{+1/n} (\vec{C}^q_\text{same})^{-1}
	=
	(q^{-1}-q)  \vec{Id}_{n^2}
	  \in  \mathrm{M}_{n^2}(\mathbb{C}).  
\end{equation*}
The second skein relation, coming from the U-turn `duality' matrices (Figures \ref{fig:decreasing-U-turns} and \ref{fig:increasing-U-turns}), says that the contractible untwisted unknot $K$ evaluates to $(-1)^{n-1}$ times the quantum integer $[n]_q = (q^n - q^{-n})/(q-q^{-1}) = \sum_{k=1}^n q^{2k -n -1}$; see Figure \ref{fig:unknot-relation}.  The third skein relation consists of the positive and negative framing relations; see Figure \ref{fig:framing-skein-relations}.  
	
\section{Isotopy invariance: proof of the main theorem}
\label{sec:computer-check-of-local-moves}
	
\begin{proof}[Proof of Theorem {\upshape\ref{thm:second-theorem}}]
Parts (B)-(C) were discussed  above.  In this section, we will establish part (A):  the $\mathrm{SL}_3$-quantum trace map is invariant under isotopy.   It suffices to check the oriented good position moves; see \S \ref{sssec:good-position}.  We do this `by hand', using computer assistance.  
\end{proof}

The more difficult moves are those of type (II) and (IV).  Indeed, (I) can be computed directly from the definitions (although it is still somewhat non-trivial), (III) is essentially equivalent to Theorem \ref{thm:first-theorem}, and (V)  is equivalent to the kink-removing skein relations appearing in Figure \ref{fig:framing-skein-relations}.  However, below we will justify moves (I), (III), and (V) as well.  
	
\begin{remark}
\label{rem:chekhov-shapiro}
	In the general case of $\mathrm{SL}_n$, a  proof of essentially these same algebraic identities (including Theorem \ref{thm:first-theorem}), which are equivalent to the local isotopy moves discussed in this section, is given in \cite{chekhovMR4597214} (motivated by \cite{SchraderInvent19, SchraderArxiv17} and earlier by \cite{Fock06, GekhtmanSelMathNewSer09}) in the context of quantum integrable systems; see also \cite{GoncharovArxiv19}.  Consequently, these works can be applied to finish the proof of the general $n$ version of Theorem \ref{thm:second-theorem}.
\end{remark}

\subsection{Notation}
\label{ssec:proofnotation}  

  Throughout this section, we will be considering a single triangle with 7 coordinates, denoted as in Figure \ref{fig:Move-II}.  (Note that the coordinates we are currently labeling as $W_2$, $Z_2$, $W_3$, $Z_3$, $X$ were labeled, respectively, $W$, $Z$, $W^\prime$, $Z^\prime$, $X$ in \S \ref{sssec:arcs-in-a-triangle}.)  We define matrices $\vec{L}^\omega(W_2, Z_2, W_3, Z_3, X)$ and $\vec{R}^\omega(W_2, Z_2, W_3, Z_3, X)$ in $\mathrm{SL}_3^q(\mathscr{T}^\omega_3(\mathfrak{T}))\subset\mathrm{M}_3(\mathscr{T}^\omega_3(\mathfrak{T})$ by the same formulas as in \S \ref{sssec:arcs-in-a-triangle}.  These matrices are considered as functions of the ordered $5$-tuple $(W_2, Z_2, W_3, Z_3, X)$.  For example, we may also consider a matrix $\vec{R}^\omega(W_3, Z_3, W_1, Z_1, X)$ corresponding to the right turn in Figure \ref{fig:Move-II}.  
	
\subsection{Move (I)}
\label{sssec:move-1}
	
In Figure \ref{fig:Move-I}, we show one of the oriented versions of Move (I).  Let $K$ be the link on the left, and $K^\prime$ the link on the right.  According to the definition of the quantum trace (\S \ref{sec:def-of-quantum-trace}) as a State Sum Formula, the equality expressing Move (I) can be interpreted as an equality of $3 \times 3$ matrices.  Specifically, the claim is that the matrix,
\begin{gather*}
\label{eq:move-i-example}
\tag{$I$}
	( \mathrm{Tr}^\omega_\lambda(K)_{s_1}^{s_2}) =
	\left(\begin{smallmatrix}
		a_1&b_1&c_1\\
		0&e_1&f_1\\
		0&0&i_1
	\end{smallmatrix}\right)
\overbrace{q^{-4/3}}^{\bar{\sigma}_3} \left(\begin{smallmatrix}
		0&0&+q^{-1}\\
		0&-1&0\\
		+q&0&0
	\end{smallmatrix}\right)
	\left(\begin{smallmatrix}
		A_1&0&0\\
		D_1&E_1&0\\
		G_1&H_1&I_1
	\end{smallmatrix}\right)=
\\
	=
	\left(\begin{smallmatrix}
	q^{-1/3} A_1 c_1 - q^{-4/3} D_1 b_1 + q^{-7/3} G_1 a_1 &
	-q^{-4/3} E_1 b_1 + q^{-7/3} H_1 a_1 &
	q^{-7/3} I_1 a_1 \\
	q^{-1/3} A_1 f_1 - q^{-4/3}D_1 e_1&
	-q^{-4/3} E_1 e_1 &
	0\\
	q^{-1/3} A_1 i_1 &
	0&
	0
	\end{smallmatrix}\right)\in\mathrm{M}_3(\mathscr{T}_3^\omega(\mathfrak{T})),
\end{gather*}
is equal to the matrix
\begin{equation*} 
\overbrace{q^{-4/3}}^{\bar{\sigma}_3} \left(\begin{smallmatrix}
		0&0&+q^{-1}\\
		0&-1&0\\
		+q&0&0
	\end{smallmatrix}\right)
	= ( \mathrm{Tr}^\omega_\lambda(K^\prime)_{s_1}^{s_2})
	 \in \mathrm{M}_3(\mathscr{T}_3^\omega(\mathfrak{T})),
\end{equation*}
where we have used Figure \ref{fig:U-turn-dec-cw} and the matrix $(\mathrm{Tr}^\omega_\mathfrak{B}(U_\text{dec}^\text{cw})_{s_1}^{s_2}) = \vec{U}^q$ from \S \ref{sssec:U-turns} for the middle matrix, and where we have put (\S \ref{ssec:proofnotation})
\begin{equation*}
	\left(\begin{smallmatrix}
		A_1&0&0\\
		D_1&E_1&0\\
		G_1&H_1&I_1
	\end{smallmatrix}\right)
	\overset{\text{def}}{=}  \vec{R}^\omega(W_2,Z_2,W_3,Z_3,X),\,\,
	\left(\begin{smallmatrix}
		a_1&b_1&c_1\\
		0&e_1&f_1\\
		0&0&i_1
	\end{smallmatrix}\right)
	\overset{\text{def}}{=}  \vec{L}^\omega(W_2, Z_2, W_3, Z_3, X)
	  \in  \mathrm{M}_3(\mathscr{T}_3^\omega(\mathfrak{T})).
\end{equation*}
See Appendix \ref{sec:the-appendix} for a computer check of the above equality of $3 \times 3$ matrices in $ \mathrm{M}_3(\mathscr{T}_3^\omega(\mathfrak{T}))$ representing this oriented Move (I) example.  
Also checked in Appendix \ref{sec:the-appendix} are the other three oriented versions of Move (I), whose equivalent matrix formulations are displayed in Figures \ref{fig:Move-Ib}, \ref{fig:Move-Ic}, \ref{fig:Move-Id}.  (Note that the reversal of order of the non-commuting variables in the case of Moves (I) and (I.c) is due to  the `ordered lower to higher, multiply left to right' rule; see \S \ref{sec:def-of-quantum-trace}.)  

\begin{gather*}
\label{eq:move-ib-example}
\tag{$I.b$}
	( \mathrm{Tr}^\omega_\lambda(K)_{s_1}^{s_2}) =
	\left(\begin{smallmatrix}
		a_1&b_1&c_1\\
		0&e_1&f_1\\
		0&0&i_1
	\end{smallmatrix}\right) 
	\left(\begin{smallmatrix}
		0&0&+q^{-1/3}\\
		0&-q^{-4/3}&0\\
		+q^{-7/3}&0&0
	\end{smallmatrix}\right)
	\left(\begin{smallmatrix}
		A_1&0&0\\
		D_1&E_1&0\\
		G_1&H_1&I_1
	\end{smallmatrix}\right)=
\\
	=
	\left(\begin{smallmatrix}
	q^{-7/3} c_1 A_1 - q^{-4/3} b_1 D_1 + q^{-1/3} a_1 G_1 &
	-q^{-4/3} b_1 E_1 + q^{-1/3} a_1 H_1 &
	q^{-1/3} a_1 I_1 \\
	q^{-7/3} f_1 A_1 - q^{-4/3}e_1 D_1&
	-q^{-4/3} e_1 E_1 &
	0\\
	q^{-7/3} i_1 A_1 &
	0&
	0
	\end{smallmatrix}\right)
\\
	\overset{?}{=} 
	\left(\begin{smallmatrix}
		0&0&+q^{-1/3}\\
		0&-q^{-4/3}&0\\
		+q^{-7/3}&0&0
	\end{smallmatrix}\right)
	= ( \mathrm{Tr}^\omega_\lambda(K^\prime)_{s_1}^{s_2})
	 \in \mathrm{M}_3(\mathscr{T}_3^\omega(\mathfrak{T})).
\end{gather*}

\begin{gather*}
\label{eq:move-ic-example}
\tag{$I.c$}
	( \mathrm{Tr}^\omega_\lambda(K)_{s_1}^{s_2}) =
	\left(\begin{smallmatrix}
		A_1&0&0\\
		D_1&E_1&0\\
		G_1&H_1&I_1
	\end{smallmatrix}\right)
	\left(\begin{smallmatrix}
		0&0&+q^{1/3}\\
		0&-q^{4/3}&0\\
		+q^{7/3}&0&0
	\end{smallmatrix}\right)
	\left(\begin{smallmatrix}
		a_1&b_1&c_1\\
		0&e_1&f_1\\
		0&0&i_1
	\end{smallmatrix}\right) 
=
\\
	=
	\left(\begin{smallmatrix}
0&0&q^{1/3} i_1 A_1
\\
0&-q^{4/3} e_1 E_1&-q^{4/3}f_1 E_1 + q^{1/3} i_1 D_1
\\
q^{7/3}a_1 I_1 & q^{7/3} b_1 I_1 - q^{4/3} e_1 H_1 & 
q^{7/3} c_1 I_1 - q^{4/3} f_1 H_1 + q^{1/3} i_1 G_1
	\end{smallmatrix}\right)
\\
	\overset{?}{=} 
	\left(\begin{smallmatrix}
		0&0&+q^{1/3}\\
		0&-q^{4/3}&0\\
		+q^{7/3}&0&0
	\end{smallmatrix}\right)
	= ( \mathrm{Tr}^\omega_\lambda(K^\prime)_{s_1}^{s_2})
	 \in \mathrm{M}_3(\mathscr{T}_3^\omega(\mathfrak{T})).
\end{gather*}

\begin{gather*}
\label{eq:move-id-example}
\tag{$I.d$}
	( \mathrm{Tr}^\omega_\lambda(K)_{s_1}^{s_2}) =
	\left(\begin{smallmatrix}
		A_1&0&0\\
		D_1&E_1&0\\
		G_1&H_1&I_1
	\end{smallmatrix}\right)
	\left(\begin{smallmatrix}
		0&0&+q^{7/3}\\
		0&-q^{4/3}&0\\
		+q^{1/3}&0&0
	\end{smallmatrix}\right)
	\left(\begin{smallmatrix}
		a_1&b_1&c_1\\
		0&e_1&f_1\\
		0&0&i_1
	\end{smallmatrix}\right) 
=
\\
	=
	\left(\begin{smallmatrix}
0 & 0 & q^{7/3} A_1 i_1
\\
0 & -q^{4/3} E_1 e_1 & -q^{4/3} E_1 f_1 + q^{7/3} D_1 i_1
\\
q^{1/3} I_1 a_1 & q^{1/3} I_1 b_1 - q^{4/3} H_1 e_1 & q^{1/3} I_1 c_1 - q^{4/3} H_1 f_1 + q^{7/3} G_1 i_1
	\end{smallmatrix}\right)
\\
	\overset{?}{=} 
	\left(\begin{smallmatrix}
		0&0&+q^{7/3}\\
		0&-q^{4/3}&0\\
		+q^{1/3}&0&0
	\end{smallmatrix}\right)
	= ( \mathrm{Tr}^\omega_\lambda(K^\prime)_{s_1}^{s_2})
	 \in \mathrm{M}_3(\mathscr{T}_3^\omega(\mathfrak{T})).
\end{gather*}

\begin{figure}[htb]
	\centering
	\includegraphics[width=.65\textwidth]{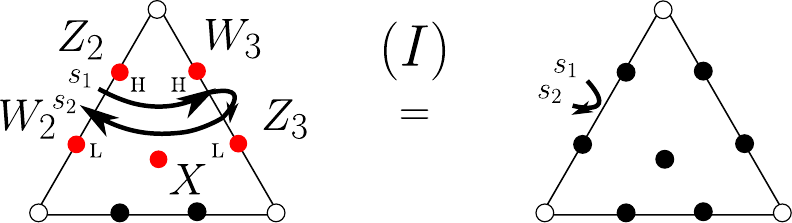}
	\caption{One of the oriented versions of Move (I).}
	\label{fig:Move-I}
\end{figure}

\begin{figure}[htb]
	\centering
	\includegraphics[width=.55\textwidth]{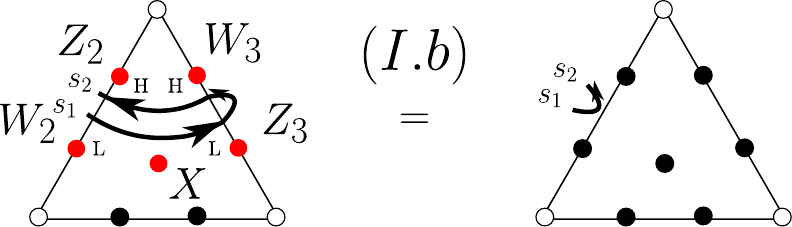}
	\caption{Move (I.b).}
	\label{fig:Move-Ib}
\end{figure}

\begin{figure}[htb]
	\centering
	\includegraphics[width=.55\textwidth]{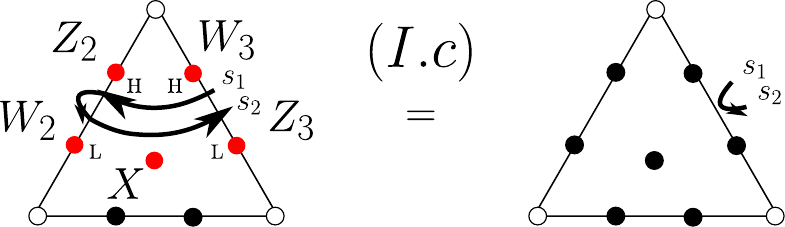}
	\caption{Move (I.c).}
	\label{fig:Move-Ic}
\end{figure}

\begin{figure}[htb]
	\centering
	\includegraphics[width=.55\textwidth]{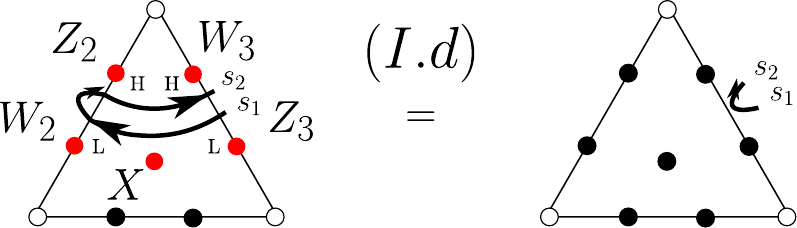}
	\caption{Move (I.d).}
	\label{fig:Move-Id}
\end{figure}

\subsection{Move (II)}
\label{sssec:move-2}
	
In Figure \ref{fig:Move-II}, we show one of the oriented versions of Move (II).  Let $K$ be the link on the left, and $K^\prime$ the link on the right.  According to the definition of the quantum trace (\S \ref{sec:def-of-quantum-trace}) as a State Sum Formula, the equality expressing Move (II) can be interpreted as an equality of $3 \times 3$ matrices.  Specifically, the claim is that the matrix,
\begin{gather*}
\label{eq:move-ii-example}
\tag{$II$}
	( \mathrm{Tr}^\omega_\lambda(K)_{s_1}^{s_2}) =
	\left(\begin{smallmatrix}
		A_2&0&0\\
		D_2&E_2&0\\
		G_2&H_2&I_2
	\end{smallmatrix}\right)
\overbrace{q^{-4/3}}^{\bar{\sigma}_3} \left(\begin{smallmatrix}
		0&0&+q\\
		0&-1&0\\
		+q^{-1}&0&0
	\end{smallmatrix}\right)
	\left(\begin{smallmatrix}
		A_1&0&0\\
		D_1&E_1&0\\
		G_1&H_1&I_1
	\end{smallmatrix}\right)=
\\
	=
	\left(\begin{smallmatrix}
	q^{-1/3} A_2 G_1 &
	q^{-1/3} A_2 H_1 &
	q^{-1/3} A_2 I_1 \\
	- q^{-4/3} E_2 D_1 + q^{-1/3} D_2 G_1&
	-q^{-4/3} E_2 E_1 + q^{-1/3} D_2 H_1 &
	q^{-1/3} D_2 I_1\\
	q^{-7/3} I_2 A_1 - q^{-4/3} H_2 D_1 + q^{-1/3} G_2 G_1&
	- q^{-4/3} H_2 E_1 + q^{-1/3} G_2 H_1&
	q^{-1/3} G_2 I_1
	\end{smallmatrix}\right)\in\mathrm{M}_3(\mathscr{T}_3^\omega(\mathfrak{T})),
\end{gather*}
is equal to the matrix
\begin{equation*}
\left(\begin{smallmatrix}
		a_3&b_3&c_3\\
		0&e_3&f_3\\
		0&0&i_3
	\end{smallmatrix}\right)
	= ( \mathrm{Tr}^\omega_\lambda(K^\prime)_{s_1}^{s_2})
	 \in \mathrm{M}_3(\mathscr{T}_3^\omega(\mathfrak{T})),
\end{equation*}
where we have used Figure \ref{fig:U-turn-inc-ccw} and the matrix $(\mathrm{Tr}^\omega_\mathfrak{B}(U_\text{inc}^\text{ccw})_{s_1}^{s_2}) = (\vec{U}^q)^\mathrm{T}$ from \S \ref{sssec:U-turns} for the middle matrix, and where we have put (\S \ref{ssec:proofnotation})
\begin{gather*}
	\left(\begin{smallmatrix}
		A_2&0&0\\
		D_2&E_2&0\\
		G_2&H_2&I_2
	\end{smallmatrix}\right)
	\overset{\text{def}}{=}  \vec{R}^\omega(W_3,Z_3,W_1,Z_1,X),\,\,
	\left(\begin{smallmatrix}
		A_1&0&0\\
		D_1&E_1&0\\
		G_1&H_1&I_1
	\end{smallmatrix}\right)
	\overset{\text{def}}{=}  \vec{R}^\omega(W_2,Z_2,W_3,Z_3,X),
\\
	\left(\begin{smallmatrix}
		a_3&b_3&c_3\\
		0&e_3&f_3\\
		0&0&i_3
	\end{smallmatrix}\right)
	\overset{\text{def}}{=}  \vec{L}^\omega(W_1, Z_1, W_2, Z_2, X)
	  \in  \mathrm{M}_3(\mathscr{T}_3^\omega(\mathfrak{T})).
\end{gather*}
See Appendix \ref{sec:the-appendix} for a computer check of the above equality of $3 \times 3$ matrices in $ \mathrm{M}_3(\mathscr{T}_3^\omega(\mathfrak{T}))$ representing this oriented Move (II) example.  
Also checked in Appendix \ref{sec:the-appendix} is the other oriented version of Move (II), whose equivalent matrix formulation is displayed in Figure \ref{fig:Move-IIbcorrect}.  

\begin{gather*}
\label{eq:move-iib-example}
\tag{$II.b$}
	( \mathrm{Tr}^\omega_\lambda(K)_{s_1}^{s_2}) =
	\left(\begin{smallmatrix}
		a_1&b_1&c_1\\
		0&e_1&f_1\\
		0&0&i_1
	\end{smallmatrix}\right) \left(\begin{smallmatrix}
		0&0&+q^{-7/3}\\
		0&-q^{-4/3}&0\\
		+q^{-1/3}&0&0
	\end{smallmatrix}\right)
	\left(\begin{smallmatrix}
		a_2&b_2&c_2\\
		0&e_2&f_2\\
		0&0&i_2
	\end{smallmatrix}\right)=
\\
	=
	\left(\begin{smallmatrix}
q^{-1/3} a_2 c_1 & q^{-1/3} b_2 c_1 - q^{-4/3} e_2 b_1 & q^{-1/3} c_2 c_1 - q^{-4/3} f_2 b_1 + q^{-7/3} i_2 a_1
\\
q^{-1/3} a_2 f_1 & q^{-1/3} b_2 f_1 - q^{-4/3} e_2 e_1 & q^{-1/3} c_2 f_1 - q^{-4/3} f_2 e_1
\\
q^{-1/3} a_2 i_1 & q^{-1/3} b_2 i_1 & q^{-1/3} c_2 i_1
	\end{smallmatrix}\right)
\\
	\overset{?}{=} \left(\begin{smallmatrix}
		A_3&0&0\\
		D_3&E_3&0\\
		G_3&H_3&I_3
	\end{smallmatrix}\right)
	= ( \mathrm{Tr}^\omega_\lambda(K^\prime)_{s_1}^{s_2})
	 \in \mathrm{M}_3(\mathscr{T}_3^\omega(\mathfrak{T})).
\end{gather*}

\begin{figure}[htb]
	\centering
	\includegraphics[width=.65\textwidth]{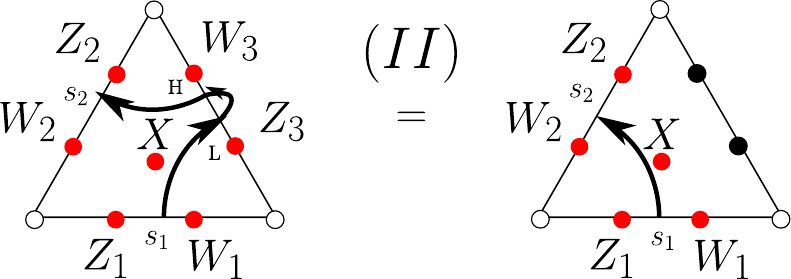}
	\caption{One of the oriented versions of Move (II).}
	\label{fig:Move-II}
\end{figure}

\begin{figure}[htb]
	\centering
	\includegraphics[width=.55\textwidth]{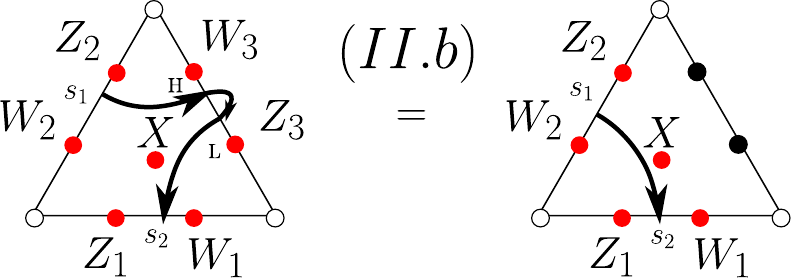}
	\caption{Move (II.b).}
	\label{fig:Move-IIbcorrect}
\end{figure}

\subsection{Move (III)}
\label{sssec:move-3}
	
In Figure \ref{fig:Move-III}, we show one of the oriented versions of Move (III).  Let $K$ be the link on the left, and $K^\prime$ the link on the right.  According to the definition of the quantum trace (\S \ref{sec:def-of-quantum-trace}) as a State Sum Formula, the equality expressing Move (III) can be interpreted as an equality of $3^2 \times 3^2$ matrices (\S \ref{ssec:matrix-conventions}).  Specifically, the claim is that the matrix,
\begin{equation*}
\label{eq:move-iii-example}
\tag{$III$}
	( \mathrm{Tr}^\omega_\lambda(K)_{s_1 s_2}^{s_3 s_4}) =
	\left(\begin{smallmatrix}
a_1^2&0&0		&
0&0&0  		&
0&0&0
\\
a_1 b_1&a_1 e_1&0		&
0&0&0		&
0&0&0
\\
a_1 c_1&a_1 f_1&a_1 i_1		&
0&0&0		&
0&0&0
\\
b_1 a_1 &0&0		&
e_1 a_1&0&	0	&
0&0&0
\\
b_1^2 & b_1 e_1&0		&
e_1 b_1 & e_1^2&0		&
0&0&0
\\
b_1 c_1 & b_1 f_1 & b_1 i_1 		&
e_1 c_1 & e_1 f_1 & e_1 i_1 		&
0&0&0
\\
c_1 a_1 & 0&0		&
f_1 a_1 & 0&0		&
i_1 a_1 & 0&0		
\\
c_1 b_1 & c_1 e_1 &0		&
f_1 b_1 & f_1 e_1 & 0		&
i_1 b_1 & i_1 e_1 & 0		
\\
c_1^2 & c_1 f_1 & c_1 i_1 		&
f_1 c_1 & f_1^2 & f_1 i_1 		&
i_1 c_1 & i_1 f_1 & i_1^2
	\end{smallmatrix}\right)\in  \mathrm{M}_{3^2}(\mathscr{T}_3^\omega(\mathfrak{T})),
\end{equation*}
is equal to the matrix
\begin{gather*}
	 \left(\begin{smallmatrix}
a_1^2 & 0 & 0 		&
0&0&0		&
0&0&0
\\
q b_1 a_1 \\+ (1-q^2) a_1 b_1& e_1 a_1 & 0		&
(q-q^{-1}) (e_1 a_1 - a_1 e_1) & 0 & 0 		&
0&0&0
\\
\begin{smallmatrix}q c_1 a_1 \\+ (1-q^2) a_1 c_1\end{smallmatrix} & f_1 a_1 & i_1 a_1 		&
(q-q^{-1})(f_1 a_1 - a_1 f_1) & 0 & 0 		&
\begin{smallmatrix}(q-q^{-1})(i_1 a_1 \\- a_1 i_1)\end{smallmatrix} & 0 & 0
\\
q a_1 b_1 & 0 & 0 		&
a_1 e_1 & 0 & 0 		&
0&0&0
\\
b_1^2 & q^{-1} e_1 b_1 & 0 		&
q^{-1} b_1 e_1 + (1-q^{-2}) e_1 b_1 & e_1^2 & 0 		&
0&0&0
\\
\begin{smallmatrix}q c_1 b_1 \\+ (1-q^2) b_1 c_1\end{smallmatrix} & \begin{smallmatrix}f_1 b_1 \\+ (q^{-1}-q)e_1 c_1\end{smallmatrix} & i_1 b_1 		&
\begin{smallmatrix}(-q^2+2-q^{-2}) e_1 c_1 \\+ c_1 e_1 \\+ (q-q^{-1})(f_1 b_1 - b_1 f_1)\end{smallmatrix} & \begin{smallmatrix}q f_1 e_1 \\+ (1-q^2) e_1 f_1\end{smallmatrix} & i_1 e_1 		&
\begin{smallmatrix}(q-q^{-1})(i_1 b_1 \\- b_1 i_1)\end{smallmatrix} & \begin{smallmatrix}(q-q^{-1})(i_1 e_1 \\- e_1 i_1)\end{smallmatrix} & 0
\\
q a_1 c_1 & 0 & 0 		&
a_1 f_1 & 0 & 0		&
a_1 i_1 & 0 & 0
\\
q b_1 c_1 & e_1 c_1 & 0 		&
b_1 f_1 + (q-q^{-1}) e_1 c_1 & q e_1 f_1 & 0 		&
b_1 i_1 & e_1 i_1 & 0
\\
c_1^2 & q^{-1} f_1 c_1 & q^{-1} i_1 c_1 		&
q^{-1} c_1 f_1 + (1-q^{-2}) f_1 c_1 & f_1^2 & q^{-1} i_1 f_1 		&
\begin{smallmatrix}q^{-1} c_1 i_1 \\+ (1-q^{-2}) i_1 c_1\end{smallmatrix} & \begin{smallmatrix}q^{-1} f_1 i_1 \\+ (1-q^{-2}) i_1 f_1\end{smallmatrix} & i_1^2
	\end{smallmatrix}\right)
\\
	= ( \mathrm{Tr}^\omega_\lambda(K^\prime)_{s_1 s_2}^{s_3 s_4})
	  \in  \mathrm{M}_{3^2}(\mathscr{T}_3^\omega(\mathfrak{T})),
\end{gather*}
where we have used Figures \ref{fig:cross-neg-same-over-to-higher}, \ref{fig:cross-pos-same-over-to-lower} and the matrices $(\mathrm{Tr}^\omega_\mathfrak{B}(C_\text{neg-same}^\text{over-to-higher})_{s_1 s_2}^{s_3 s_4}
	)
	=  
	(\vec{C}_\text{same}^q)^{-1}$ 
and
$(\mathrm{Tr}^\omega_\mathfrak{B}(C_\text{pos-same}^\text{over-to-lower})_{s_1 s_2}^{s_3 s_4}
	)
	=  
	\vec{C}_\text{same}^q$, respectively,
from \S \ref{sssec:crossings} as part of the computation for the matrix on the right, and where we have put (\S \ref{ssec:proofnotation})
\begin{equation*}
	\left(\begin{smallmatrix}
		a_1&b_1&c_1\\
		0&e_1&f_1\\
		0&0&i_1
	\end{smallmatrix}\right)
	\overset{\text{def}}{=}  \vec{L}^\omega(W_2, Z_2, W_3, Z_3, X)\in\mathrm{M}_3(\mathscr{T}_3^\omega(\mathfrak{T})).
\end{equation*}
See Appendix \ref{sec:the-appendix} for a computer check of the above equality of $3^2 \times 3^2$ matrices in $\mathrm{M}_{3^2}(\mathscr{T}_3^\omega(\mathfrak{T}))$ representing this oriented Move (III) example.
In Figures \ref{fig:Move-IIIb}, \ref{fig:Move-IIIc}, and \ref{fig:Move-IIId} we prove the remaining oriented versions of Move (III), in terms of the moves already established.  

\begin{figure}[htb]
	\centering
	\includegraphics[width=.65\textwidth]{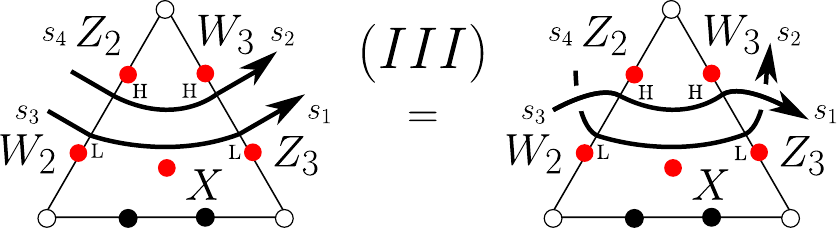}
	\caption{One of the oriented versions of Move (III).}
	\label{fig:Move-III}
\end{figure}	

\begin{figure}[htb]
	\centering
	\includegraphics[width=.95\textwidth]{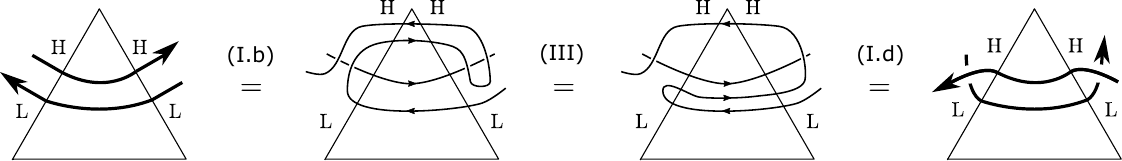}
	\caption{Move (III.b) and its proof.}
	\label{fig:Move-IIIb}
\end{figure}

\begin{figure}[htb]
	\centering
	\includegraphics[width=.95\textwidth]{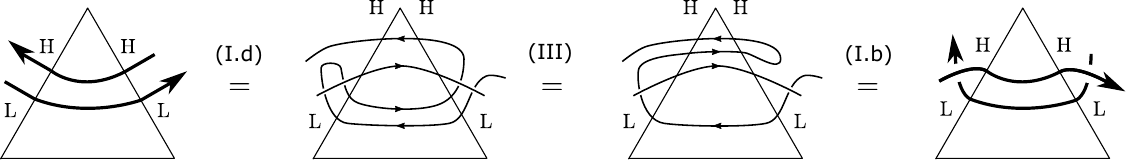}
	\caption{Move (III.c) and its proof.}
	\label{fig:Move-IIIc}
\end{figure}

\begin{figure}[htb]
	\centering
	\includegraphics[width=.95\textwidth]{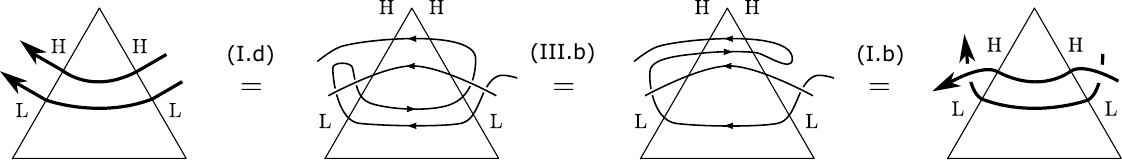}
	\caption{Move (III.d) and its proof.}
	\label{fig:Move-IIId}
\end{figure}

\subsection{Move (IV)}
\label{sssec:move-4}
	
In Figure \ref{fig:Move-IV}, we show one of the oriented versions of Move (IV).  Let $K$ be the link on the left, and $K^\prime$ the link on the right.  According to the definition of the quantum trace (\S \ref{sec:def-of-quantum-trace}) as a State Sum Formula, the equality expressing Move (IV) can be interpreted as an equality of $3^2 \times 3^2$ matrices (\S \ref{ssec:matrix-conventions}).  Specifically, the claim is that the matrix,
\begin{equation*}
\label{eq:move-iv-example}
\tag{$IV$}
	( \mathrm{Tr}^\omega_\lambda(K)_{s_1 s_2}^{s_3 s_4}) =
	\left(\begin{smallmatrix}
a_3 A_2&0&0&
b_3 A_2&0&0&
c_3 A_2&0&0
\\
a_3 D_2&a_3 E_2&0&
b_3 D_2&b_3 E_2&0&
c_3 D_2&c_3 E_2&0
\\
a_3 G_2&a_3 H_2&a_3 I_2&
b_3 G_2&b_3 H_2&b_3 I_2&
c_3 G_2&c_3 H_2&c_3 I_2
\\
0&0&0&
e_3 A_2&0&0&
f_3 A_2&0&0
\\
0&0&0&
e_3 D_2&e_3 E_2&0&
f_3 D_2&f_3 E_2&0
\\
0&0&0&
e_3 G_2&e_3 H_2&e_3 I_2&
f_3 G_2&f_3 H_2&f_3 I_2&
\\
0&0&0&
0&0&0&
i_3 A_2&0&0
\\
0&0&0&
0&0&0&
i_3 D_2&i_3 E_2&0
\\
0&0&0&
0&0&0&
i_3 G_2&i_3 H_2&i_3 I_2
\\
	\end{smallmatrix}\right)\in  \mathrm{M}_{3^2}(\mathscr{T}_3^\omega(\mathfrak{T})),
\end{equation*}
is equal to the matrix
\begin{gather*}
	q^{+1/3} \left(\begin{smallmatrix}
q^{-1} A_2 a_3&0&0&
q^{-1} A_2 b_3&0&0&
q^{-1} A_2 c_3&0&0
\\
D_2 a_3&E_2 a_3&0&
D_2 b_3 + (q^{-1}-q) A_2 e_3&E_2 b_3&0&
D_2 c_3 + (q^{-1}-q) A_2 f_3&E_2 c_3&0
\\
G_2 a_3&H_2 a_3&I_2 a_3&
G_2 b_3&H_2 b_3&I_2 b_3&
G_2 c_3+(q^{-1}-q)A_2 i_3&H_2 c_3&I_2 c_3
\\
0&0&0&
A_2 e_3&0&0&
A_2 f_3&0&0
\\
0&0&0&
q^{-1}D_2 e_3&q^{-1}E_2 e_3&0&
q^{-1} D_2 f_3&q^{-1}E_2 f_3&0
\\
0&0&0&
G_2 e_3&H_2 e_3&I_2 e_3&
G_2 f_3+(q^{-1}-q)D_2 i_3&H_2 f_3+(q^{-1}-q)E_2 i_3&I_2 f_3
\\
0&0&0&
0&0&0&
A_2 i_3&0&0
\\
0&0&0&
0&0&0&
D_2 i_3&E_2 i_3&0
\\
0&0&0&
0&0&0&
q^{-1}G_2 i_3&q^{-1}H_2 i_3&q^{-1}I_2 i_3
\\
	\end{smallmatrix}\right)
\\
	= ( \mathrm{Tr}^\omega_\lambda(K^\prime)_{s_1 s_2}^{s_3 s_4})
	  \in  \mathrm{M}_{3^2}(\mathscr{T}_3^\omega(\mathfrak{T})),
\end{gather*}
where we have used Figure \ref{fig:cross-pos-same-over-to-higher} and the matrix $(\mathrm{Tr}^\omega_\mathfrak{B}(C_\text{pos-same}^\text{over-to-higher})_{s_1 s_2}^{s_3 s_4}
	)
	=  
	\vec{C}_\text{same}^q$ from \S \ref{sssec:crossings} as part of the computation for the matrix on the right, and where we have put (\S \ref{ssec:proofnotation})
\begin{equation*}
	\left(\begin{smallmatrix}
		A_2&0&0\\
		D_2&E_2&0\\
		G_2&H_2&I_2
	\end{smallmatrix}\right)
	\overset{\text{def}}{=}  \vec{R}^\omega(W_3,Z_3,W_1,Z_1,X),\,\,
	\left(\begin{smallmatrix}
		a_3&b_3&c_3\\
		0&e_3&f_3\\
		0&0&i_3
	\end{smallmatrix}\right)
	\overset{\text{def}}{=}  \vec{L}^\omega(W_1, Z_1, W_2, Z_2, X)\in\mathrm{M}_3(\mathscr{T}_3^\omega(\mathfrak{T})).
\end{equation*}
See Appendix \ref{sec:the-appendix} for a computer check of the above equality of $3^2 \times 3^2$ matrices in $\mathrm{M}_{3^2}(\mathscr{T}_3^\omega(\mathfrak{T}))$ representing this oriented Move (IV) example.  
In Figures \ref{fig:Move-IVb}, \ref{fig:Move-IVc}, and \ref{fig:Move-IVd} we prove the remaining oriented versions of Move (IV).  (Note that Move (I.b$^\prime$) and Move (I.d$^\prime$), used here, are proved in \S \ref{ssec:auxiliarymoves}.)

\begin{figure}[htb]
	\centering
	\includegraphics[width=.65\textwidth]{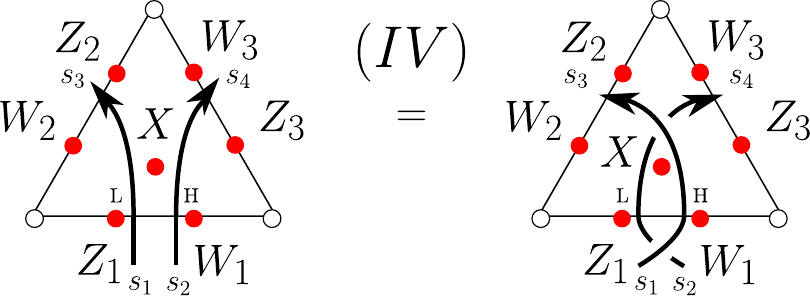}
	\caption{One of the oriented versions of Move (IV).}
	\label{fig:Move-IV}
\end{figure}	

\begin{figure}[htb]
	\centering
	\includegraphics[width=.95\textwidth]{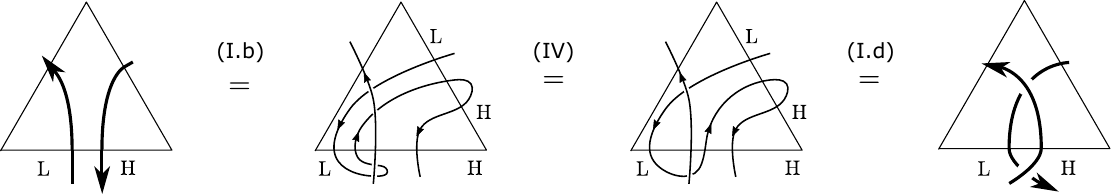}
	\caption{Move (IV.b) and its proof.}
	\label{fig:Move-IVb}
\end{figure}

\begin{figure}[htb]
	\centering
	\includegraphics[width=.95\textwidth]{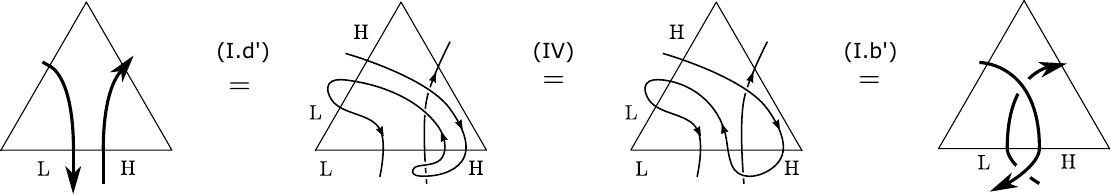}
	\caption{Move (IV.c) and its proof.}
	\label{fig:Move-IVc}
\end{figure}

\begin{figure}[htb]
	\centering
	\includegraphics[width=.95\textwidth]{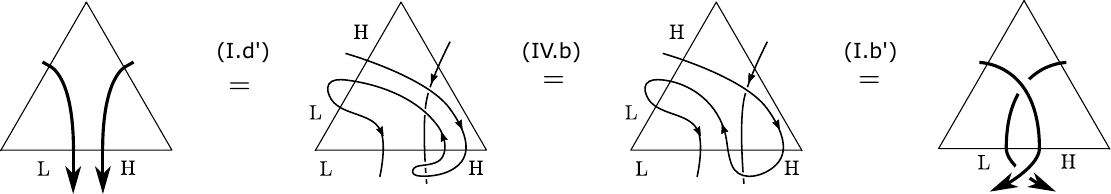}
	\caption{Move (IV.d) and its proof.}
	\label{fig:Move-IVd}
\end{figure}

\subsection{Move (V)}
\label{ssec:moveV}
	
See Figure \ref{fig:Move-V}.  This move is  implied by the kink-removing skein relations appearing in Figure \ref{fig:framing-skein-relations}; see \S \ref{sec:HOMFLYPT-skein-relation}.  
	
\begin{figure}[htb]
	\centering
	\includegraphics[width=.65\textwidth]{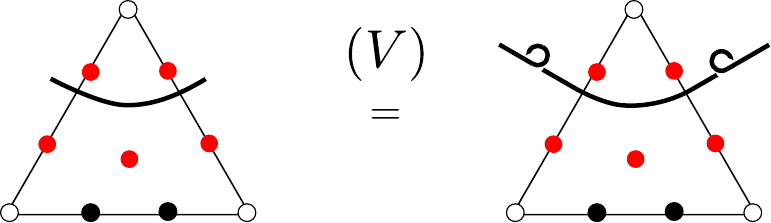}
	\caption{Move (V), valid for either orientation.}
	\label{fig:Move-V}
\end{figure}
	
\subsection{Auxiliary moves}
\label{ssec:auxiliarymoves}
	
Move (I.b$^\prime$) and Move (I.d$^\prime$) were used to establish Moves (IV.c) and (IV.d).  The proof of Move (I.b$^\prime$) is shown in Figure \ref{fig:Move-I-prime}.  Here $\sim$ denotes an isotopy preserving good position.  (See also the second to last paragraph of the proof of Lemma 24 in \cite{BonahonGT11}.)  The proof of Move (I.d$^\prime$) is obtained from that of Move (I.b$^\prime$) by horizontal reflection.

\begin{figure}[htb]
	\centering
	\includegraphics[width=\textwidth]{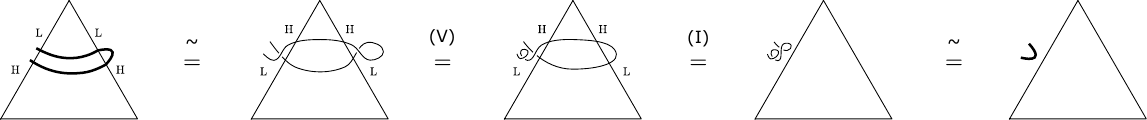}
	\caption{Move (I$^\prime$) and its proof, valid for either orientation.}
	\label{fig:Move-I-prime}
\end{figure}

\appendix

\section{Proof of Proposition \ref{prop:reshetikhin-turaev}}
\label{sec:proof-of-reshetikhin-turaev-invariant}

This is an application of Theorem XIV.5.1 in \cite{Kassel95}.  We will closely follow the definitions, notations, and conventions of \cite{Kassel95}, informing  otherwise.  Essentially all of the following theory is standard.  See, for instance, \cite{Kassel95,Brown02,joyalMR1173027,Majid95}.

Our first goal is to define the ribbon category $\mathcal{C}_V$ of interest.  In particular, this requires defining the objects $V$, morphisms $f : U \to V$, tensor products $V\otimes W$, tensor unit $I$, braiding morphisms $c_{V, W} : V \otimes W \to W \otimes V$ (see Remark \ref{rem:atypical-braiding}), dual objects $V^*$, left duality morphisms $b_V : I \to V \otimes V^*$ and $d_V : V^* \otimes V \to I$, 
 twist morphisms $\theta_V : V \to V$,  and right duality morphisms $b^\prime_V : I \to V^* \otimes V$ and $d^\prime_V : V \otimes V^* \to I$.
As above, fix $n \in \mathbb{Z}$, $n>1$, as well as $q$ and $\omega=q^{1/n^2}$ in $\mathbb{C} - \left\{ 0 \right\}$.  This section does not require a choice of square root $\omega^{1/2}$; compare \S \ref{ssec:biangles-and-the-reshetikhin-turaev-invariant}.  All vector spaces are over $\mathbb{C}$.  

\subsection{Quantum special linear group \texorpdfstring{$H = \mathrm{SL}_n^q$}{H=SLnq}}
\label{ssec:quantum-special-linear-group}
	
The \textit{quantum matrix algebra} $\mathrm{M}_n^q$  is the quotient of the free algebra in generators $(T_i^j)_{1 \leq i, j \leq n}$ by the relations
\begin{gather*}		
	T_i^m T_i^k = q T_i^k T_i^m, \,\,
	T_j^m T_i^m = q T_i^m T_j^m,	\,\,
	T_i^m T_j^k = T_j^k T_i^m, \,\,
	T_j^m T_i^k - T_i^k T_j^m = (q - q^{-1}) T_i^m T_j^k,
\end{gather*}	
for $i < j$ and $k < m$ (we think of lower indices indicating rows and upper indices columns).     The \textit{quantum determinant} $\mathrm{Det}^q\in\mathrm{M}_n^q$ is  (compare \S \ref{sec:quantum-SLn})
\begin{equation*}
	\mathrm{Det}^q\overset{\text{def}}{=}\sum_{\sigma\in S_n} (-q^{-1})^{\ell(\sigma)} T_{1}^{\sigma(1)} T_{2}^{\sigma(2)} \cdots T_{n}^{\sigma(n)}.
\end{equation*}
The \textit{quantum special linear group} $\mathrm{SL}_n^q$ is the quotient
\begin{equation*}
	H \overset{\text{def}}{=} \mathrm{SL}_n^q \overset{\text{def}}{=} \mathrm{M}_n^q / (\mathrm{Det}^q - 1).
\end{equation*}
The algebra $H$ is a Hopf algebra, meaning it is equipped with linear  maps
\begin{equation*}
\mu_H: H \otimes H \to H, \,\, \eta_H: \mathbb{C} \to H, \,\,
 \Delta_H : H \to H \otimes H, \,\, \epsilon_H : H \to \mathbb{C}, \,\, S_H : H \to H,
\end{equation*}
namely the product, unit, coproduct, counit, and antipode.  Specifically, if $\delta_{ij}$ denotes the \textit{Kronecker delta} (equals $1$ if $i=j$ and $0$ else),  then the coproduct $\Delta_H$, counit $\epsilon_H$, and antipode $S_H$ are defined by (abusing notation by using the same symbol for elements of $\mathrm{M}^q_n$ as their images in $H$)
\begin{equation*}
	\Delta_H(T_i^j) \overset{\text{def}}{=} \sum_{k=1}^n T_i^k \otimes T_k^j, \,\,
	\epsilon_H(T_i^j) \overset{\text{def}}{=} \delta_{ij}, \,\,    S_H(T_i^j)\overset{\text{def}}{=}(-q)^{j-i} A_j^i,     
\end{equation*}
where the \textit{quantum minor} $A_j^i$ is the quantum determinant of the subalgebra, isomorphic to $\mathrm{M}^q_{n-1}$, of $\mathrm{M}^q_n$ generated by the $T_k^\ell$ with $k\neq j$ and $\ell\neq i$.  (We write simply $\mu_H(x\otimes y)=xy$ and $\eta_H(z)=z1_H$.)

\subsection{Braided tensor category \texorpdfstring{$H$-$\mathrm{Comod}$}{H-Comod} of right \texorpdfstring{$H$}{H}-comodules}
\label{ssec:braided-category}

\subsubsection{Right $H$-comodules}
\label{ssec:right-h-comodules}

A vector space $V$ is a right $H$-comodule if it is equipped with a linear map
$\Delta_V : V \to V \otimes H$, namely a (right) coaction, satisfying certain properties.    The tensor product $V \otimes W$ of two right $H$-comodules is a right $H$-comodule,  with coaction
\begin{equation*}
     \Delta_{V\otimes W}(v\otimes w)
\overset{\mathrm{def}}{=}  \sum_{(v),(w)}v_V\otimes w_W\otimes v_H w_H.
\end{equation*}
Here, we have used Sweedler's notation for the coactions $\Delta_V$ and $\Delta_W$.  The trivial right $H$-comodule $\mathbb{C}$ has coaction $\Delta_\mathbb{C}(z)=z\otimes 1_H$.

Let $H$-$\mathrm{Comod}$ denote the tensor category whose objects $V$ are right $H$-comodules, morphisms $f:U\to V$ are homomorphisms of right $H$-comodules, and with tensor products $V\otimes W$ and tensor unit $I=\mathbb{C}$ as above.  

\subsubsection{Braidings}
\label{ssec:braiding}

The bialgebra $H$ is cobraided, meaning it is equipped with linear maps $r_H : H \otimes H \to \mathbb{C}$ and its inverse (with respect to the convolution operator $\star$ defined in \S \ref{ssec:coribbon element}) $\bar{r}_H : H \otimes H \to \mathbb{C}$, namely the universal $R$-forms.  Specifically, if $E_i^{j} : H \to \mathbb{C}$ is the linear map $E_i^{j}(T_k^\ell) = \delta_{i k} \delta_{j \ell}$, then 
\begin{gather*}		
	r_H \overset{\text{def}}{=} q^{-1/n} \left( \sum_{1 \leq i \neq j \leq n} E_i^{i } \otimes E_j^{j } + q \sum_{i=1}^n E_i^{i } \otimes E_i^{i } + (q - q^{-1}) \sum_{1 \leq i < j \leq n} E_i^{j } \otimes E_j^{i } \right),		\\		
	\bar{r}_H \overset{\text{def}}{=} q^{+1/n} \left( \sum_{1 \leq i \neq j \leq n} E_i^{i } \otimes E_j^{j } + q^{-1} \sum_{i=1}^n E_i^{i } \otimes E_i^{i } + (q^{-1} - q) \sum_{1 \leq i < j \leq n} E_i^{j } \otimes E_j^{i } \right).
\end{gather*}

Consequently, the tensor category $H$-$\mathrm{Comod}$ of right $H$-comodules is braided, with  braiding morphisms $c_{V, W} : V \otimes W \to W \otimes V$ and inverse braidings $\bar{c}_{V, W} : V \otimes W \to W \otimes V$ (so, $c_{V,W}^{-1}=\bar{c}_{W,V}$) defined by
\begin{equation*}
	c_{V, W}(v \otimes w) \overset{\text{def}}{=} \sum_{(v), (w)} (w_W \otimes v_V) r_H (v_H \otimes w_H), \,\,
	\bar{c}_{V, W}(v \otimes w) \overset{\text{def}}{=} \sum_{(v), (w)} (w_W \otimes v_V) \bar{r}_H (w_H \otimes v_H).
\end{equation*}
 
\begin{remark}
\label{rem:atypical-braiding}
	By symmetry, one can just as well take the inverse braidings $\bar{c}_{V, W}$ to be `the' braidings for the category.  For technical reasons, we will prefer this choice going forward.  (Note that, in order to compute with $\bar{c}_{V,W}$, the formulas,
\begin{equation*}
     \bar{r}(xy\otimes z)=\sum_{(z)}\bar{r}(y\otimes z^\prime)\bar{r}(x\otimes z^{\prime\prime}),
\,\,
\bar{r}(x\otimes yz)=\sum_{(x)}\bar{r}(x^\prime\otimes y)\bar{r}(x^{\prime\prime}\otimes z),     
\end{equation*}
can be helpful, here using Sweedler's notation for the coproduct $\Delta_H$.)
\end{remark}

\subsection{Ribbon sub-category \texorpdfstring{$H$-$\mathrm{Comod}_f$}{H-Comod-f} of finite-dimensional right \texorpdfstring{$H$}{H}-comodules}
\label{ssec:ribbon-category}

\subsubsection{Left dualities}
\label{ssec:left-duality}

Let $V$ be a right $H$-comodule of dimension $N<\infty$.  The  dual space $V^*=\mathrm{Hom}_\mathbb{C}(V, \mathbb{C})$  is a right $H$-comodule as follows.   Choose a basis $e^1, e^2, \dots, e^N$ for $V$ with corresponding dual basis $e_1^*, e_2^*, \dots, e_N^*$ for $V^*$.  Let $h_i^j \in H$ for $1 \leq i, j \leq N$ satisfy
\begin{equation*}
\Delta_V(e^j) = \sum_{k=1}^N e^k \otimes h_k^j.
\end{equation*}
 The coaction $\Delta_{V^*} : V^* \to V^* \otimes H$ is defined by
\begin{equation*}
\Delta_{V^*}(e_i^*) \overset{\text{def}}{=} \sum_{k=1}^N e_k^* \otimes S_H h_i^k.
\end{equation*}

Let $H$-$\mathrm{Comod}_f$ denote the braided sub-category of $H$-$\mathrm{Comod}$ consisting of finite-dimensional right $H$-comodules.  Then $H$-$\mathrm{Comod}_f$ has left duality, the dual objects $V^*$ being defined as above, and with left duality morphisms $b_V : \mathbb{C} \to V \otimes V^*$ and $d_V : V^* \otimes V \to \mathbb{C}$ defined by
\begin{equation*}
	b_V(1) \overset{\text{def}}{=} \nu \sum_{k=1}^N e^k \otimes e^*_k, \,\,  d_V(e^*_i \otimes e^j) \overset{\text{def}}{=} \nu^{-1} \delta_{ij}.
\end{equation*}
Here, $\nu$ is a fixed complex \textit{duality parameter}.  

\begin{remark}
\label{rem:non-standard-duality}
	One possible choice for the duality parameter is $\nu = 1$.  However, it is better for our purposes to take $\nu = q^{(1-n)/2n} = \bar{\sigma}_n q^{(n-1)/2}$, where $\bar{\sigma}_n =  q^{(1-n^2)/2n}$ is the square root of the (signed) coribbon element (Definition \ref{def:ribbon-element}); see Lemma \ref{fact:matrices-for-dualities} and Remarks \ref{rem:comment-on-coribbon-element} and \ref{rem:discussion-of-duality-parameter-choice}.  
\end{remark}

\subsubsection{Twists}
\label{ssec:coribbon element}

The following has been adapted to our purposes from \cite[Chapter XIV, Exercises 5-6]{Kassel95}.  

The \textit{convolution operator} $\star:H^*\otimes H^*\to H^*$ on the dual space $H^*=\mathrm{Hom}_\mathbb{C}(H,\mathbb{C})$ is defined by
\begin{equation*}
(f\star g)(x)
\overset{\mathrm{def}}{=}\sum_{(x)}f(x^\prime)g(x^{\prime\prime}).
\end{equation*}
This operation makes $H^*$ into an algebra, with multiplicative unit $1_{H^*}=\epsilon_H$ the counit for $H$.
Similarly, $\star$ operates on $(H\otimes H)^*=\mathrm{Hom}_\mathbb{C}(H\otimes H,\mathbb{C})$ by
\begin{equation*}
(r\star s)(x\otimes y)
\overset{\mathrm{def}}{=}\sum_{(x),(y)}r(x^\prime\otimes y^\prime)s(x^{\prime\prime}\otimes y^{\prime\prime}).
\end{equation*}

The cobraided Hopf algebra $H$ is \textit{coribbon}, meaning there exists an invertible central element $\zeta_H$ in $H^*$ such that
\begin{equation*}  \label{coribbon-algebra-equations}
	\zeta_H \circ \mu_H = (r_H \circ \tau_{H, H}) \star r_H \star (\zeta_H \otimes \zeta_H),
	\,\,  \zeta_H(1_H) = 1, 
	\,\,  \zeta_H \circ S_H = \zeta_H.
\end{equation*}
Here, $\tau_{H, H} : H \otimes H \to H \otimes H$ is the swapping map $x\otimes y\mapsto y\otimes x$.  Specifically,   $\zeta_H \in H^*$ and its convolution inverse $\bar{\zeta}_H \in H^*$ are defined by 
\begin{gather*}  
		\zeta_H(T_i^j) \overset{\text{def}}{=} \zeta_n \delta_{ij}, \,\, \zeta_n \overset{\text{def}}{=} (-1)^{n-1} q^{(n^2-1)/n}
	\,\, \left( = (-1)^{n-1} \omega^{n(n^2-1)} \right),  \\
		\bar{\zeta}_H(T_i^j) \overset{\text{def}}{=} \bar{\zeta}_n \delta_{ij}, \,\, \bar{\zeta}_n \overset{\text{def}}{=} (-1)^{n-1} q^{(1-n^2)/n}
	\,\, \left( = (-1)^{n-1} \omega^{n(1-n^2)} \right).
\end{gather*}

Consequently, the braided category with left duality $H$-$\mathrm{Comod}_f$ of finite-dimensional right $H$-comodules (with braidings $\bar{c}_{V,W}$, see Remark \ref{rem:atypical-braiding}) is ribbon, with twist morphisms $\theta_V : V \to V$ defined by
\begin{equation*}
	\theta_V(v) \overset{\text{def}}{=} \sum_{(v)} v_V \bar{\zeta}_H(v_H).
\end{equation*}

\begin{remark}
\label{rem:atypical-coribbon-element}  
	Note that $\bar{\zeta}_n\in\mathbb{C}-\{0\}$ is what we previously called the `coribbon element' in  Definition \ref{def:ribbon-element}; compare Remarks \ref{rem:comment-on-coribbon-element} and \ref{rem:non-standard-duality}.  We also refer to $\bar{\zeta}_H\in H^*$ as the \textit{coribbon element}.  
\end{remark}

\subsubsection{Right dualities}
\label{ssec:right-duality}

Moreover, the ribbon category $H$-$\mathrm{Comod}_f$ of finite-dimensional right $H$-comodules (with braidings $\bar{c}_{V,W}$, see Remark \ref{rem:atypical-braiding}) has right duality, with right duality morphisms  $b^\prime_V : \mathbb{C} \to V^* \otimes V$ and $d^\prime_V : V \otimes V^* \to \mathbb{C}$ defined by 
\begin{equation*}
	b^\prime_V \overset{\text{def}}{=} (\mathrm{id}_{V^*} \otimes \theta_V) \circ \bar{c}_{V, V^*} \circ b_V, 
\,\,
	d^\prime_V \overset{\text{def}}{=} d_V \circ \bar{c}_{V, V^*} \circ (\theta_V \otimes \mathrm{id}_{V^*}).
\end{equation*}

\subsection{Ribbon sub-category \texorpdfstring{$\mathcal{C}_V$}{C-V} of \texorpdfstring{$H$-$\mathrm{Comod}_f$}{H-Comod-f} coming from the quantum row-space}
\label{ssec:ribbon-cat-coming-from-row-space}

\subsubsection{Quantum row-space}
\label{ssec:quantum-row-space}

The \textit{quantum row-space} $A$ is the  quotient of the free algebra in generators $(e^j)_{1\leq j\leq n}$ by the relations
$
e^j e^i = q e^i e^j
$
for $j > i$.  The (infinite-dimensional) algebra $A$ is a  right $H$-comodule (in fact, a right  $H$-comodule-algebra), with coaction $\Delta_A : A \to A \otimes H$ defined by
\begin{equation*}
	\Delta_A(e^j) \overset{\text{def}}{=} \sum_{k=1}^n e^k \otimes T_k^j.
\end{equation*}
For each integer $d \geq 0$, let $V_d \subset A$ denote the (finite-dimensional) sub-space of $A$ consisting of homogeneous polynomials of degree $d$.  Then  $V_d$ is a right $H$-sub-comodule of $A$.  For the remainder of this appendix, define $V\subset A$ by
\begin{equation*}
V \overset{\text{def}}{=} V_1 = \mathrm{span}_\mathbb{C}(e^1, e^2, \dots, e^n).
\end{equation*} 

Let  $\mathcal{C}_V$ denote the ribbon sub-category of $H$-$\mathrm{Comod}_f$ generated by $V$.  In particular, objects of $\mathcal{C}_V$ are finite tensor products of $V$ and its dual $V^*$.  

\subsubsection{Morphism formulas}
\label{ssec:formulas}

We will give explicit formulas for the braidings $\overline{c}_{V, W}$ (see Remark \ref{rem:atypical-braiding}), left dualities $b_V$, $d_V$,  twists $\theta_V$, and right dualities $b^\prime_V$, $d^\prime_V$.  Since we are working in the sub-category $\mathcal{C}_V$, it suffices to compute the formulas for $V$ and $V^*$.  

It can be shown that the braidings $\bar{c}_{V, V}$, $\bar{c}_{V^*, V^*}$, $\bar{c}_{V^*, V}$ and $\bar{c}_{V, V^*}$ are calculated by the same formulas as those provided in \S \ref{sssec:crossings}; see also Remark \ref{rem:comment-on-coribbon-element}.  
The left dualities $b_V: \mathbb{C} \to V \otimes V^*$ and $d_V: V^* \otimes V \to \mathbb{C}$ are calculated by the formulas in \S \ref{ssec:left-duality}, taking the \textit{symmetric duality parameter} $\nu = q^{(1-n)/2n} = \bar{\sigma}_n q^{(n-1)/2}$; see Remark \ref{rem:non-standard-duality} and Lemma \ref{fact:matrices-for-dualities}.  
The twists   $\theta_V : V \to V$ and $\theta_{V^*}: V^* \to V^*$ can be computed by the formulas in \S \ref{ssec:coribbon element} as
\begin{equation*}
	\theta_V(e^j) = \bar{\zeta}_n e^j, 
\,\,
	\theta_{V^*}(e^*_i) = \bar{\zeta}_n e^*_i.
\end{equation*}
  The right dualities $b^\prime_V: \mathbb{C} \to V^* \otimes V$ and $d^\prime_V: V \otimes V^* \to \mathbb{C}$ can be computed by the formulas in \S \ref{ssec:right-duality} as
\begin{equation*}
	b^\prime_V(1) = (-1)^{n-1} \nu \sum_{k=1}^n q^{2k-n-1} e^*_k \otimes e^k, 
\,\,
	d^\prime_V(e^i \otimes e^*_j) = (-1)^{n-1} \nu^{-1} q^{n-2i+1} \delta_{ij}.
\end{equation*}

\subsubsection{Matrix formulas}
\label{ssec:matrix-formulas}

We will make an observation for the duality morphisms similar to Fact \ref{fact:braiding-matrices}.  This will require   our choice of the symmetric duality parameter $\nu =\bar{\sigma}_n q^{(n-1)/2}$; see Remark \ref{rem:non-standard-duality}.  

Recall  the matrix $\vec{U}^q$ from \S \ref{sssec:U-turns}.  (See \S \ref{ssec:matrix-conventions} for matrix conventions.)  More generally, define for any  scalar $\nu$ a matrix $\vec{U}^q_\nu$ in $\mathrm{M}_n(\mathbb{C})$ by 
\begin{equation*}
	(\vec{U}^q_\nu)_i^j \overset{\text{def}}{=} \nu (-q)^{i-n} \delta_{i,n-j+1}.  
\end{equation*}
Note that the matrices $\vec{U}^q = \vec{U}^q_\nu$ agree for  $\nu = \bar{\sigma}_n q^{(n-1)/2}$.  
Similarly, define $b_V(\nu)$, $d_V(\nu)$, $b^\prime_V(\nu)$, and $d^\prime_V(\nu)$ for any scalar $\nu$ to be the dualities defined in \S \ref{ssec:left-duality} and \ref{ssec:right-duality} (see also \S \ref{ssec:formulas}).  
Recall from \S \ref{sssec:crossings} the ordered bases $\beta_{V^*, V}$ and $\beta_{V, V^*}$ of $V^* \otimes V$ and $V \otimes V^*$.    When written in terms of the basis $\beta_{V, V^*}$ (and the basis $\left\{ 1 \right\}$ of $\mathbb{C}$), the left duality $b_V(\nu) : \mathbb{C} \to V \otimes V^*$ becomes a $n^2 \times 1$ matrix with coefficients $[b_V(\nu)]_{ij}^1$.  Similarly, when written in terms of these bases, the dualities $d_V(\nu) : V^* \otimes V \to \mathbb{C}$, $b^\prime_V(\nu) : \mathbb{C} \to V^* \otimes V$, and $d^\prime_V(\nu) : V \otimes V^* \to \mathbb{C}$ become  $1 \times n^2$, $n^2 \times 1$, and $1 \times n^2$ matrices with coefficients $[d_V(\nu)]_1^{ij}$, $[b^\prime_V(\nu)]_{ij}^1$, and $[d^\prime_V(\nu)]_1^{ij}$.  

\begin{lemma}[defining property of the symmetric duality parameter $\nu$]
\label{fact:matrices-for-dualities}
	The following equalities of matrix coefficients,
\begin{gather*}
	[b^\prime_V(\nu)]^1_{ij} = ( \vec{U}^q_\nu )_j^i,\,\,
[d^\prime_V(\nu)]^{ij}_1 = ( (\bar{\zeta}_n)^{-1} \vec{U}^q_\nu )_j^i,
\,\,
[b_V(\nu)]^1_{ij} = ( (\vec{U}^q_\nu)^\mathrm{T} )_i^j,\,\,
[d_V(\nu)]^{ij}_1 =( (\bar{\zeta}_n)^{-1} (\vec{U}^q_\nu)^\mathrm{T} )_i^j,
\end{gather*}
hold if and only if $\nu = \nu_{\pm} = \pm \bar{\sigma}_n q^{(n-1)/2}$, where $\bar{\sigma}_n =  q^{(1-n^2)/2n}$.  
\end{lemma}

\begin{proof}
	We display the case of $d^\prime_V(\nu)$.  The other computations are similar.  	
	Put $v_{ij} = (\beta_{V, V^*})_{ij} = e^i \otimes (-q)^{n-j} e^*_{n-j+1}$.  We calculate
\begin{gather*}
	[d^\prime_V(\nu)]^{ij}_1 
=  d^\prime_V(\nu)(v_{ij})
=  (-q)^{n-j} d^\prime_V(\nu)(e^i \otimes e^*_{n-j+1})
  =  (-q)^{n-j} (-1)^{n-1} \nu^{-1} q^{n-2i+1} \delta_{i, n-j+1} \\
=  (-1)^{j-1} q^{n-j} \nu^{-1} q^{2j-n-1} \delta_{i, n-j+1}
  = (-1)^{j-1} q^{j-1} \nu^{-1} \delta_{i, n-j+1},
\end{gather*}
as well as 
\begin{gather*}
	( (\bar{\zeta}_n)^{-1} \vec{U}^q_\nu )_j^i
=  (-1)^{n-1} \left| \bar{\zeta}_n \right|^{-1} \nu (-q)^{j-n} \delta_{j, n-i+1} = (-1)^{j-1} q^{j-1} q^{1-n} \left| \bar{\zeta}_n \right|^{-1} \nu  \delta_{i, n-j+1},
\end{gather*}
where we define $\left| \bar{\zeta}_n \right| = +q^{(1-n^2)/n}$.  We gather
\begin{gather*}
[d^\prime_V(\nu)]^{ij}_1 = ( (\bar{\zeta}_n)^{-1} \vec{U}^q_\nu )_j^i
\,\, \Leftrightarrow \,\,
\nu^{-1} = q^{1-n} \left| \bar{\zeta}_n \right|^{-1} \nu
\,\, \Leftrightarrow \,\,
\pm \bar{\sigma}_n q^{(n-1)/2} = \nu.  \qedhere
\end{gather*}
\end{proof}

\begin{remark}
\label{rem:discussion-of-duality-parameter-choice}
	Compare the four analogous matrix equations in \S \ref{sssec:U-turns}; see Figures \ref{fig:decreasing-U-turns} and  \ref{fig:increasing-U-turns}.  We will see the reason for the transposition of the indices $i$, $j$ in the first two equalities of Lemma \ref{fact:matrices-for-dualities}  when we discuss diagrammatics below.  Essentially, this is because the tails of the U-turns in Figure \ref{fig:decreasing-U-turns}, corresponding (see \S \ref{ssec:RT-functor} and \ref{ssec:alternative-def-of-biangle-trace}) to the morphisms $b^\prime_V$ and $d^\prime_V$, are associated with the second tensor factor (measured from bottom to top).  The opposite is true for the U-turns in Figure \ref{fig:increasing-U-turns}, corresponding to the morphisms $b_V$ and $d_V$.  

The sign ambiguity in Lemma \ref{fact:matrices-for-dualities} is resolved by our need for the matrix $\vec{U}^q = \vec{U}^q_{\nu_+}$.   Why, in \S \ref{sssec:U-turns}, was the matrix $\vec{U}^q$ preferred over $-\vec{U}^q = \vec{U}^q_{\nu_-}$?  For instance, this was required for the quantum trace to satisfy the local isotopy Move (II); see Figure \ref{fig:Move-II} and Equation \eqref{eq:move-ii-example}.  (Assuming  we ask for the local monodromy matrices to have positive entries.)  

Note  that the first equation in Lemma \ref{fact:matrices-for-dualities}, for $b^\prime_V(\nu)$, uniquely determines the value  (including the sign) of the coribbon parameter $\bar{\zeta}_n$ (see Remark \ref{rem:atypical-coribbon-element}), independent of $\nu$.
\end{remark}

\subsection{Category \texorpdfstring{$\mathcal{R}$}{R} of ribbons and the Reshetikhin--Turaev functor \texorpdfstring{$F_V:\mathcal{R} \to \mathcal{C}_V$}{F-V R to C-V}}
\label{ssec:RT-functor}

\subsubsection{Category of ribbons}
\label{sssec:category-of-ribbons}

The universal ribbon category $\mathcal{R}$, also called the category of oriented ribbons, is defined exactly as in \cite[\S XIV.5.1]{Kassel95}.  Roughly speaking, the objects are collections of oriented ribbon ends, and the morphisms are isotopy classes of oriented ribbons $L$ matching this end data.  It is useful to, rather, think of ribbons as framed links, the link being the spine of the ribbon, and where the framing at a point on the link is normal to the tangent space of the ribbon at that point (in this way of thinking about the framing we differ from \cite{Kassel95}, but it is immaterial mathematically, so long as we work with framed links rather than ribbons).  

Ribbons live in the space $\mathbb{R}^2 \times [0, 1]$ having the usual $x$,$y$,$z$-coordinates.  However, when drawing \textit{ribbon diagrams} (which are, importantly, different from our previous pictures such as those appearing in Figures \ref{fig:decreasing-U-turns}-\ref{fig:single-strand-crossing-the-biangle}), the $x$-coordinate is drawn on the page horizontally right, the $z$-coordinate on the page vertically up, and the $y$-coordinate into the page.  By definition, the ribbon ends lying on the same boundary plane in $\mathbb{R}^2 \times \left\{0, 1\right\}$ are required to have distinct $x$-coordinates (and isotopy preserves this property).  The coordinates $x, y, z$ are called \textit{ribbon coordinates}. 

Ribbon diagrams always represent ribbons $L$ with the \textit{blackboard framing}, meaning the constant framing in the $(0, -1, 0)$ direction, that is, out of the page toward the eye of the reader.  Such a framing is always possible by introducing kinks into the ribbon.  Here, a positive kink (Figure \ref{fig:positive-kink-skein-relation}) replaces a full right-handed twist, and a negative kink (Figure \ref{fig:negative-kink-skein-relation}) a full left-handed twist \cite[\S X.8]{Kassel95}.   Note that ribbon ends lying on $\mathbb{R}^2 \times \left\{0, 1\right\}$ are also required to have this blackboard framing.  

\subsubsection{Reshetikhin--Turaev functor}
\label{sssec:rt-functor}

We will apply Theorem XIV.5.1 in \cite{Kassel95}.  This says that there is a unique functor $F_V$ from the category $\mathcal{R}$ of ribbons  to the ribbon category $\mathcal{C}_V$, which preserves the braiding,  duality, and twist, and satisfies the property that a single downward-pointing (namely, negative $z$ direction) ribbon end (a distinguished object in $\mathcal{R}$) is mapped to $V$.  (Consequently, upward-pointing ribbon ends are mapped to $V^*$.)  In particular, $F_V$ provides an  isotopy invariant  of stated oriented ribbons $L$  (see below).

Diagrammatically speaking,    we use exactly the same conventions for displaying morphisms in the category $\mathcal{C}_V$ as in \cite[Chapter XIV]{Kassel95}.  For example, in Figure \ref{fig:twists} we show how the twist morphisms are displayed diagrammatically \cite[Chapters X.8 and XIV.5.1]{Kassel95}; compare Figures \ref{fig:positive-kink-skein-relation}-\ref{fig:negative-kink-skein-relation}, as well as our   calculations in \S \ref{ssec:formulas}.  To help distinguish these ribbon diagrams from our previous pictures, as in Figures \ref{fig:decreasing-U-turns}-\ref{fig:single-strand-crossing-the-biangle}, we put a white arrow on the boundary axis $\mathbb{R} \times \left\{0\right\} \times \left\{ 0 \right\}$ indicating the positive $x$-direction.  

\begin{figure}[htb]
	\centering
	\includegraphics[width=.48\textwidth]{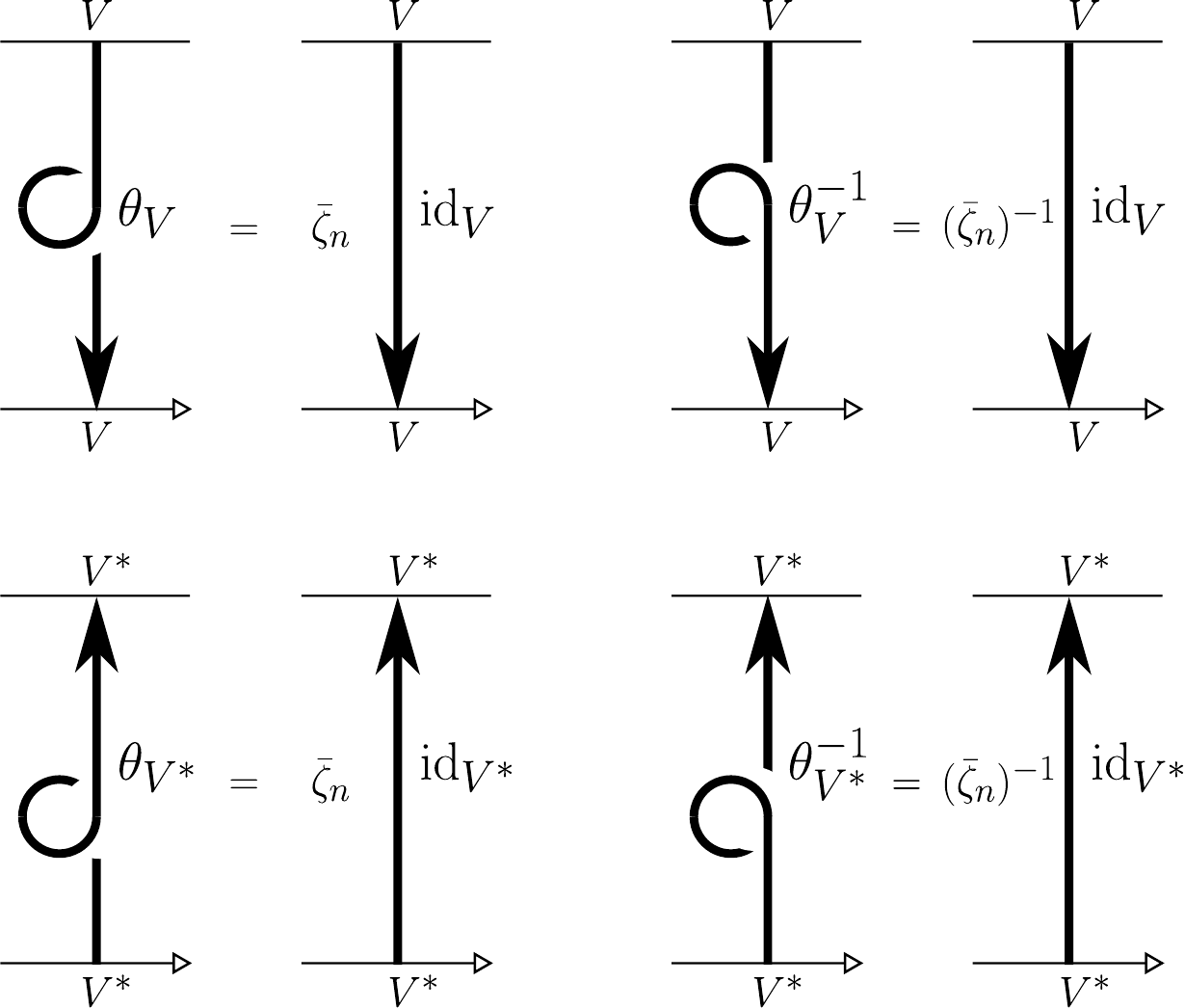}
	\caption{Ribbon diagrams for twist morphisms.}
	\label{fig:twists}
\end{figure}	

\subsection{Alternative definition of the biangle quantum trace map}
\label{ssec:alternative-def-of-biangle-trace}

We will prove part (A) of Proposition \ref{prop:reshetikhin-turaev}.  The strategy is to use the Reshetikhin--Turaev functor $F_V$ to give an alternative definition of the quantum trace map $\mathrm{Tr}^\omega_\mathfrak{B}$ for a biangle $\mathfrak{B}$, equivalent to the definition provided in \S \ref{ssec:biangles-and-the-reshetikhin-turaev-invariant}.  To do this, we need to be able to pass back and forth between the  more symmetric topological setting of framed links $K$ in the thickened biangle $\mathfrak{B} \times (0, 1)$ (which comes without any preferred parametrization--see the beginning of \S \ref{ssec:biangles-and-the-reshetikhin-turaev-invariant}), and the  less symmetric categorical setting of framed links $L$ in $\mathbb{R}^2\times[0,1]$ (where the parametrization matters--see, for example,  Figure \ref{fig:twists}).  

\subsubsection{`Turning your head'}
\label{sssec:alternative-definition}

To pass between the two settings, we  `turn our head'.  That is, instead of viewing the thickened biangle $\mathfrak{B} \times (0, 1)$ `from the top' (as in Figures \ref{fig:decreasing-U-turns}-\ref{fig:single-strand-crossing-the-biangle}), instead we view it `from the side'.  This can be done in two different ways, illustrated in Figure \ref{fig:DandG} (intuitively, from the perspective of Person G or that of Person D).  

More precisely, let the thickened biangle $\mathfrak{B} \times (0, 1) = [0, 1] \times \mathbb{R} \times (0, 1)$ have \textit{biangle coordinates} $X, Y, Z$ with respect to a choice of parametrization $\mathscr{P}$ (intuitively, this choice is only to tell Person G and Person D where to stand, but the  point is that where they stand does not matter, as they will see the same answer).  For example, in the left hand side of Figure \ref{fig:DandG}, the $X$-coordinate is drawn on the page horizontally right, the $Y$-coordinate on the page vertically up, and the $Z$-coordinate out of the page toward the eye of the reader.    Then,  by Figure \ref{fig:DandG} and our discussion of ribbon coordinates in \S \ref{sssec:category-of-ribbons}, if $x_\mathrm{G}$, $y_\mathrm{G}$, $z_\mathrm{G}$ denote the ribbon coordinates from Person G's perspective, the \textit{\textnormal{G}-(ribbon) coordinate transformation} $\varphi_\mathrm{G} : \mathfrak{B} \times (0,1) \overset{\sim}{\to} (0,1)\times\mathbb{R}\times[0,1]\hookrightarrow \mathbb{R}^2 \times [0, 1]$ is
\begin{equation*}
	\varphi_G(X, Y, Z) \overset{\text{def}}{=} (x_G, y_G, z_G) \overset{\text{def}}{=} (+Z, +Y, 1-X).
\end{equation*}
Similarly, the \textit{\textnormal{D}-(ribbon) coordinate transformation} $\varphi_\mathrm{D} : \mathfrak{B} \times (0,1) \overset{\sim}{\to} (0,1)\times\mathbb{R}\times[0,1]\hookrightarrow \mathbb{R}^2 \times [0, 1]$  is 
\begin{equation*}
	\varphi_D(X, Y, Z) \overset{\text{def}}{=} (x_D, y_D, z_D) \overset{\text{def}}{=} (+Z, -Y, +X).
\end{equation*}
Note that from either perspective the positive $x$-ribbon coordinate corresponds to the direction of increasing height in the thickened biangle $\mathfrak{B} \times (0, 1)$.  

\begin{figure}[htb]
	\centering
	\includegraphics[width=
	.775\textwidth]{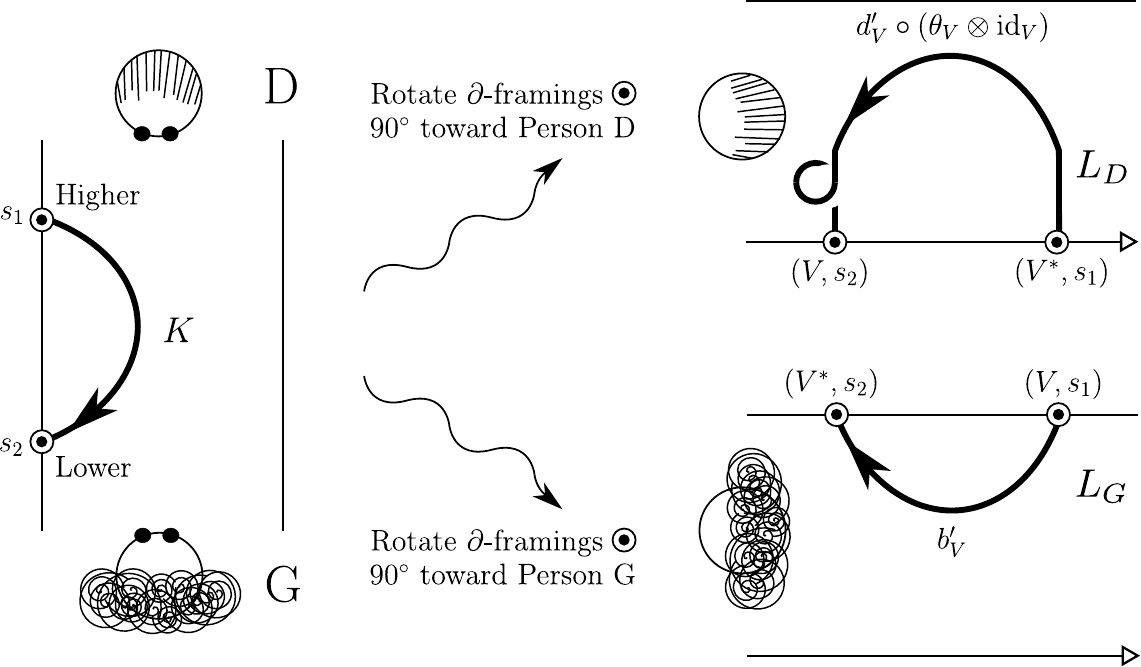}
	\caption{Calculating the biangle quantum trace by `turning your head'.  This can be done in two different ways, but if we choose the symmetric duality parameter $\nu  = \bar{\sigma}_n q^{(n-1)/2}$, depending on the square root of the (signed) coribbon element for the Hopf algebra $H=\mathrm{SL}_n^q$, then the result is independent of which perspective, that of Person G or Person D, is taken; see Lemma \ref{fact:matrices-for-dualities}.}
	\label{fig:DandG}
\end{figure}	

\subsubsection{Definition via the Reshetikhin--Turaev functor: topological setup}
\label{sssec:definition-via-the-reshetikhin-turaev-functor-topological-aspects}

Fix a stated framed oriented link $(K, s)$ in the thickened biangle $\mathfrak{B} \times (0, 1)$; see \S \ref{ssec:stated-links}.  Recall that by definition the framing on the boundary $\partial K$ points up in the vertical direction.  That is, the framing vector is $(X, Y, Z) = (0, 0, 1)$ in biangle coordinates (\S \ref{sssec:alternative-definition}); see the left hand side of Figure \ref{fig:DandG} where the  `bullseyes', indicating the tips of the framing vectors, point out of the page toward the eye of the reader.  Recall that, by definition, elements of $\partial K$ lying on the same boundary component of $\mathfrak{B} \times (0, 1)$ have distinct heights, namely, distinct Z-coordinates (Remark \ref{rem:higher-lower}).  

We now `turn our head' as in \S \ref{sssec:alternative-definition}, say from the perspective of Person G.  More precisely, by applying the $G$-coordinate transformation $\varphi_G$ we obtain a link $L_G = \varphi_G(K)$ in $\mathbb{R}^2 \times [0, 1]$.  However, $L_G$  is not yet a framed link, according to the definition in \S \ref{sssec:category-of-ribbons}.  Indeed, its framing vectors on the boundary $\mathbb{R}^2 \times \left\{ 0, 1 \right\}$ are all $(x_G, y_G, z_G) = (1, 0, 0)$ in ribbon coordinates.  To fix this, we rotate each framing vector 90 degrees toward Person G, yielding an appropriate framing vector $(x_G, y_G, z_G)=(0, -1, 0).$  We call the resulting framed link again $L_G$.  Similarly, by this process we obtain a, possibly different, framed link $L_D$ in $\mathbb{R}^2 \times [0, 1]$ from Person D's perspective; see Figure \ref{fig:DandG}.  We say that the new framed links $L_G$ and $L_D$ have been \textit{corrected}.  

Importantly, note that this process of correcting the framing may introduce a twist in the link.  Indeed, as an example, on the right hand side of Figure \ref{fig:DandG} the framed link $K$ acquires a right-handed twist from Person D's perspective.   On the other hand, there is no twist from Person G's perspective.  

Note also that the distinct Z-coordinates condition for the link boundary $\partial K$ is consistent with the distinct $x$-coordinates condition for $\partial L_G$ and $\partial L_D$ (see \S \ref{sssec:category-of-ribbons}).  

\subsubsection{Definition via the Reshetikhin--Turaev functor: algebraic setup}
\label{sssec:definition-via-the-reshetikhin-turaev-functor-algebraic-aspects}

From Person G's perspective, let $N_G^i$ for $i=0, 1$ denote the number of points of the corrected framed link $L_G$ lying on the boundary plane $\mathbb{R}^2 \times \left\{ i \right\}$.  The framed link $L_G$ comes with a pair of sequences $V_G^i = ((V_G^i)_j)_{j=1,2,\dots, N_G^i}$ of vector spaces $(V_G^i)_j \in \left\{ V, V^* \right\}$, where the sequence is ordered in the increasing $x_G$-direction; see \S \ref{sssec:rt-functor}.  In other words, the sequences $V_G^i$ come from evaluating the Reshetikhin--Turaev functor $F_V$ on the domain and codomain objects of the framed link $L_G$ viewed as a morphism in the category of ribbons $\mathcal{R}$; see \S \ref{sssec:category-of-ribbons}.    Moreover, the evaluation of the functor $F_V$ on the framed link $L_G$ provides a linear map 
\begin{equation*}
	F_V(L_G) : \bigotimes_{j=1, 2, \dots, N_G^0} (V_G^0)_j \to \bigotimes_{j=1, 2, \dots, N_G^1} (V_G^1)_j.
\end{equation*}  
Similarly, define $N_D^i$ for $i=0,1$, sequences  $(V^i_D)_{j=1,2,\dots,N_D^i}$ of vector spaces, and a linear map $F_V(L_D)$ from Person D's perspective.  

For example, on the right hand side of Figure \ref{fig:DandG} we see $V_G^1 = ((V_G^1)_1, (V_G^1)_2) = (V^*, V)$ and, as a degenerate case, $V_G^0 = (\mathbb{C})$.  The corresponding linear map is $F_V(L_G) = b^\prime_V : \mathbb{C} \to V^* \otimes V$ (see \cite[p.351]{Kassel95}).  On the other hand, from Person D's perspective, $V_D^1=(\mathbb{C})$ and $V_D^0 = ((V_D^0)_1, (V_D^0)_2) = (V, V^*)$ and the corresponding linear map is $F_V(L_D) = d^\prime_V \circ (\theta_V \otimes \mathrm{id}_V) : V \otimes V^* \to \mathbb{C}$ (compare Figure \ref{fig:twists}).  

  Let $v_i \in V$ be the basis vector $v_i = e^i$.  By abuse of notation, we also let $v_j \in V^*$ denote the covector $(-q)^{n-j} e^*_{n-j+1}$.  Note that $\left\{ v_i \right\}_{i=1,2,\dots,n}$ provides an ordered basis for either $V$ or $V^*$.  From Person G's perspective, say, we may then consider the  basis $\beta(V_G^i)$ for the tensor product $\bigotimes_{j=1, 2, \dots, N_G^i} (V_G^i)_j$ defined by
\begin{equation*}
	\beta(V_G^i)_{j_1 j_2 \cdots j_{N^i_G}} \overset{\text{def}}{=} v_{j_1} \otimes v_{j_2} \otimes \cdots \otimes v_{j_{N^i_G}}
\,\,
\left( j_1, j_2, \dots, j_{N^i_G} \in \left\{1, 2, \dots, n\right\}\right),
\end{equation*}
ordered as in \S \ref{ssec:matrix-conventions}.  Similarly, from Person D's perspective, we define a basis $\beta(V^i_D)$ for the tensor product $\bigotimes_{j=1, 2, \dots, N_D^i} (V_D^i)_j$.  Note that this procedure recovers the familiar bases $\beta_{V, V}$, $\beta_{V^*, V^*}$, $\beta_{V^*, V}$, and $\beta_{V, V^*}$ for $V \otimes V$, $V^* \otimes V^*$, $V^* \otimes V$, and $V \otimes V^*$ used in \S \ref{ssec:matrix-formulas} and \S \ref{sssec:crossings}.

\subsubsection{Alternative definition of the $\mathrm{SL}_n$-biangle quantum trace map via the Reshetikhin--Turaev functor}
\label{sssec:definition-via-the-reshetikhin-turaev-functor}

Recall that the framed link $K$ in the thickened biangle $\mathfrak{B} \times (0, 1)$ has in addition been equipped with a state $s$, meaning that to each point of the boundary $\partial K$ is associated a state-number in $\left\{ 1, 2, \dots, n \right\}$.  From Person G's perspective, say, mimicking the sequences $V_G^i = ((V_G^i)_j)_j$ we obtain a pair of sequences of state-numbers $s_G^i = ((s_G^i)_j)_{j=1,2,\dots,N_G^i}$ for $i=0,1$.  Similarly, from Person D's perspective, we obtain a pair of state sequences $s_D^i$.    Note $s^0_G=s^1_D$ and $s^1_G=s^0_D$.

For example, on the right hand side of Figure \ref{fig:DandG} we see $s_G^1 = ((s_G^1)_1, (s_G^1)_2) = (s_2, s_1)$ and $s_G^0 = \emptyset$ from Person G's perspective.  We also see $s_D^1 = \emptyset$ and $s_D^0 = ((s_D^0)_1, (s_D^0)_2) = (s_2, s_1)$ from Person D's perspective.  (Here and in Figure \ref{fig:DandG}, the names of the states $s_1, s_2$ were arbitrary, taken from \S \ref{sssec:U-turns}, and not intended to be ordered in any particular way.)  

\begin{definition}
\label{def:alternative-definition-of-the-biangle-quantum-trace-map}
	Choose one of the two parametrizations $\mathscr{P}$ of the thickened biangle $\mathfrak{B}\times(0,1)$, as in \S \ref{sssec:alternative-definition}.
	Given a stated framed oriented link $(K, s)$ in the thickened biangle $\mathfrak{B} \times (0, 1)$, from Person G's perspective (Figure \ref{fig:DandG}) define the complex number     $\mathrm{Tr}^\omega_\mathfrak{B}(K, s)	=  \mathrm{Tr}^\omega_\mathfrak{B}(K, s)(\mathscr{P})\in\mathbb{C}$, depending on the parametrization $\mathscr{P}$, by 
\begin{equation*}
	\mathrm{Tr}^\omega_\mathfrak{B}(K, s)
\overset{\text{def}}{=}  (\mathrm{Tr}^\omega_\mathfrak{B})_G(K, s)
\overset{\text{def}}{=}  [  F_V(L_G)  ]_{i_1 i_2 \cdots i_{N^1_G}}^{j_1 j_2 \cdots j_{N^0_G}},
\end{equation*}
where $[F_V(L_G)]$ denotes the matrix of the linear map $F_V(L_G)$ written in the bases $\beta(V^0_G)$ and $\beta(V^1_G)$, and with the state-numbers $i_k = (s^1_G)_k$ and $j_k = (s^0_G)_k$.  In addition, define the number  (also depending on $\mathscr{P}$)
\begin{equation*}
(\mathrm{Tr}^\omega_\mathfrak{B})_D(K, s)
\overset{\text{def}}{=}  [  F_V(L_D)  ]_{i_1 i_2 \cdots i_{N^1_D}}^{j_1 j_2 \cdots j_{N^0_D}},
\end{equation*}
from Person D's perspective (also Figure \ref{fig:DandG}), where $[F_V(L_D)]$ denotes the matrix of $F_V(L_D)$ written in the bases $\beta(V^0_D)$ and $\beta(V^1_D)$, and with the states $i_k = (s^1_D)_k$ and $j_k = (s^0_D)_k$.  
\end{definition}

\subsubsection{Finishing the proof of Proposition {\upshape\ref{prop:reshetikhin-turaev}}}
\label{sssec:finishing-the-proof}

\begin{lemma}[symmetry of the $\mathrm{SL}_n$-biangle quantum trace map]
\label{lem:main-lemma}
For a given parametrization $\mathscr{P}$ of $\mathfrak{B}\times(0,1)$, we have $(\mathrm{Tr}^\omega_\mathfrak{B})_G(K, s) =  (\mathrm{Tr}^\omega_\mathfrak{B})_D(K, s)$ for any stated framed oriented link $(K, s)$.
\end{lemma}

\begin{proof}
As a first case, consider the stated link $(K, s)$ displayed in Figure \ref{fig:DandG} (and Figure \ref{fig:U-turn-dec-cw}).  In \S \ref{sssec:definition-via-the-reshetikhin-turaev-functor-algebraic-aspects}, we saw that $F_V(L_G) = b^\prime_V : \mathbb{C} \to V^* \otimes V$ and $F_V(L_D) = d^\prime_V \circ (\theta_V \otimes \mathrm{id}_V) : V \otimes V^* \to \mathbb{C}$.  By Lemma \ref{fact:matrices-for-dualities} and the formula for the twist $\theta_V$ appearing in \S \ref{ssec:formulas}, we compute
\begin{equation*}
	 (\mathrm{Tr}^\omega_\mathfrak{B})_G(K, s)
=  [  F_V(L_G)  ]_{s_2 s_1}^{1}
=  [  b^\prime_V  ]_{s_2 s_1}^{1}
=  ( \vec{U}^q )_{s_1}^{s_2},
\end{equation*}
and
\begin{gather*}
	(\mathrm{Tr}^\omega_\mathfrak{B})_D(K, s)
=  [  F_V(L_D)  ]_{1}^{s_2 s_1}
=  [  d^\prime_V \circ (\theta_V \otimes \mathrm{id}_V)  ]_{1}^{s_2 s_1}
=  \bar{\zeta}_n [  d^\prime_V   ]_{1}^{s_2 s_1}
=  \bar{\zeta}_n  (\bar{\zeta}_n)^{-1} (  \vec{U}^q )_{s_1}^{s_2}
=  ( \vec{U}^q )_{s_1}^{s_2},
\end{gather*}
as desired.    (Recall the topological discussion surrounding Figure \ref{fig:DandG}, where it is important that we have chosen the symmetric duality parameter $\nu = \bar{\sigma}_n q^{(n-1)/2}$.)   The proof when the stated link $(K, s)$ is one of the three other kinds of U-turns (see Figures \ref{fig:U-turn-dec-ccw} and  \ref{fig:increasing-U-turns}) is similar (two of these U-turns acquire left-handed twists in $L_D$).  

  Next, consider the stated link $(K,s)$ displayed in Figure \ref{fig:cross-pos-same-over-to-lower}.  Note that no twists are introduced when either $L_G$ or $L_D$ are corrected, because their constituent arcs connect different boundary components of the thickened biangle.  We compute 
\begin{equation*}
	 (\mathrm{Tr}^\omega_\mathfrak{B})_G(K, s)
=  [  F_V(L_G)  ]_{s_3 s_4}^{s_1 s_2}     
=  [  \bar{c}_{V,V}  ]_{s_3 s_4}^{s_1 s_2}     
=  ( \vec{C}^q_\mathrm{same} )_{s_3 s_4}^{s_1 s_2},     
\end{equation*}
     see \cite[p.341]{Kassel95}, Remark \ref{rem:atypical-braiding}, and Fact \ref{fact:braiding-matrices},  as well as     
\begin{equation*}
	 (\mathrm{Tr}^\omega_\mathfrak{B})_D(K, s)
=  [  F_V(L_D)  ]_{s_1 s_2}^{s_3 s_4}     
=  [  \bar{c}_{V^*,V^*}  ]_{s_1 s_2}^{s_3 s_4}      
=  ( \vec{C}^q_\mathrm{same} )_{s_1 s_2}^{s_3 s_4}   
=  ( \vec{C}^q_\mathrm{same} )_{s_3 s_4}^{s_1 s_2},      
\end{equation*}
     see \cite[p.348]{Kassel95} and Fact \ref{fact:braiding-matrices}.    The proofs for $(K,s)$ as in Figures \ref{fig:cross-neg-same-over-to-higher}, \ref{fig:cross-pos-same-over-to-higher}, and \ref{fig:cross-neg-same-over-to-lower} are similar.  For the other type of crossing, say, the stated link $(K,s)$ displayed in Figure \ref{fig:cross-neg-opp-over-to-lower}, we compute in the same way     
\begin{gather*}
(\mathrm{Tr}^\omega_\mathfrak{B})_G(K, s)
=  [  F_V(L_G)  ]_{s_3 s_4}^{s_1 s_2}     
=  [  \bar{c}_{V,V^*}  ]_{s_3 s_4}^{s_1 s_2}     
=  (  \vec{C}^q_\mathrm{opp}  )_{s_3 s_4}^{s_1 s_2}        
\\         =  (  \vec{C}^q_\mathrm{opp}  )_{s_1 s_2}^{s_3 s_4}     
=  [  \bar{c}_{V,V^*}  ]_{s_1 s_2}^{s_3 s_4}     
=  [  F_V(L_D)  ]_{s_1 s_2}^{s_3 s_4}     
=  (\mathrm{Tr}^\omega_\mathfrak{B})_D(K, s).     
\end{gather*}
The proofs for $(K,s)$ as in Figures \ref{fig:cross-pos-opp-over-to-higher}, \ref{fig:cross-neg-opp-over-to-higher}, and \ref{fig:cross-pos-opp-over-to-lower} are similar, as well as for the trivial strand (possibly with twists), Figure \ref{fig:single-strand-crossing-the-biangle}.    

The argument for a general stated link $(K, s)$ follows from the previous special cases, together with the fact that the Reshetikhin--Turaev functor $F_V$ satisfies the State Sum Property (essentially by definition) and is an isotopy invariant.  Put $K$ into a bridge position (\S \ref{sssec:kinks-and-the-Reshetikhin-Turaev-invariant}) with respect to a partition $0=X_0<X_1<\dots<X_p=1$ in biangle-coordinates, call the resulting link $K^\prime$.  This determines bridge-positions for $L^\prime_G$ and $L^\prime_D$ with respect to partitions $0=(z_G)_0<(z_G)_1<\dots<(z_G)_p=1$ and $0=(z_D)_0<(z_D)_1<\dots<(z_D)_p=1$ in $G$- and $D$-coordinates.  Let $((L^\prime_G)_i,(s_G)_i)$ and $((L^\prime_D)_i,(s_D)_i)$ be the corresponding restricted stated links in $(0,1)\times\mathbb{R}\times[(z_G)_i,(z_G)_{i+1}]$ and $(0,1)\times\mathbb{R}\times[(z_D)_i,(z_D)_{i+1}]$, where for the moment the states $(s_G)_i$ and $(s_D)_i$ are chosen arbitrarily on the internal boundaries $z_G=(z_G)_i$ and $z_D=(z_D)_i$ for $0<i<p$.  Note that each component of $(L^\prime_G)_i$ or $(L^\prime_D)_i$ is either a U-turn, crossing, or trivial strand (possibly with twists) (we are abusing the word `component' here, since we are considering a crossing to be a single component).  Also, upon correction of $L^\prime_G$ and $L^\prime_D$, no components of any $(L^\prime_G)_i$ acquire twists, while all U-turn components (and no others) of the $(L^\prime_D)_i$ acquire twists, which are right-handed (resp. left-handed) when the U-turn component of $(L^\prime_D)_i$ has boundary with $z_D$-coordinate $(z_D)_{i}$ (resp. $(z_D)_{i+1}$), namely, is a \textit{cap} (resp. \textit{cup}).  By the coordinate transformation $x_G=x_D$, $y_G=-y_D$, $z_G=1-z_D$, the link $(L^\prime_G)_i$ is related to the link $(L^\prime_D)_{p-1-i}$ as follows:  (1)  as un-oriented links, they are the same, except cups (resp. caps) of the former flip to become caps (resp. cups) of the latter (with twists as described just above);  (2)  the orientations of U-turns, thought of as pointing along the positive or negative $x$-direction, are preserved;   (3)  the orientations of same (resp. opposite) direction crossings are flipped (resp. preserved);  and, (4)  the orientations of trivial strands are flipped.  Write $(s_G)_i=(((s_G)_i)^0,((s_G)_i)^1)$  (as in \S \ref{sssec:definition-via-the-reshetikhin-turaev-functor}, note $((s_G)_0)^0=s_G^0$ and $((s_G)_{p-1})^1=s_G^1$), and $((s_G)_i)^j|_{((L^\prime_G)_i)_k}$ the restriction of these states to a component $((L^\prime_G)_i)_k$ of $(L^\prime_G)_i$, and similarly for Person D.  If it happens that $((s_G)_i)^0=((s_D)_{p-1-i})^1$ and $((s_G)_i)^1=((s_D)_{p-1-i})^0$, then by the previous special cases we have
\begin{gather*}
     [F_V((L^\prime_G)_i)]_{((s_G)_i)^1}^{((s_G)_i)^0}     
=  \prod_k [F_V(((L^\prime_G)_i)_k)]_{((s_G)_i)^1|_{((L^\prime_G)_i)_k}}^{((s_G)_i)^0|_{((L^\prime_G)_i)_k}}    
=  \prod_k [F_V(((L^\prime_D)_{p-1-i})_k)]_{((s_G)_i)^0|_{((L^\prime_D)_{p-1-i})_k}}^{((s_G)_i)^1|_{((L^\prime_D)_{p-1-i})_k}}     
\\          =  [F_V((L^\prime_D)_{p-1-i})]_{((s_G)_i)^0}^{((s_G)_i)^1}    
=  [F_V((L^\prime_D)_{p-1-i})]_{((s_D)_{p-1-i})^1}^{((s_D)_{p-1-i})^0}.     
\end{gather*}
     Since $s^0_G=s^1_D$ and $s^1_G=s^0_D$ (\S \ref{sssec:definition-via-the-reshetikhin-turaev-functor}),  it follows by the State Sum Formula that     
\begin{gather*}
     [F_V(L^\prime_G)]_{s_G^1}^{s_G^0}   
=  \sum_{\text{compatible }(s_G)_i}  [F_V((L^\prime_G)_0)]_{((s_G)_0)^1}^{s_G^0}     [F_V((L^\prime_G)_1)]_{((s_G)_1)^1}^{((s_G)_1)^0}       \cdots  [F_V((L^\prime_G)_{p-1})]_{s_G^1}^{((s_G)_{p-1})^0}     
\\          =  \sum_{\text{compatible }(s_G)_i}  [F_V((L^\prime_D)_{p-1})]_{s_G^0}^{((s_G)_0)^1}          
[F_V((L^\prime_D)_{p-2})]_{((s_G)_1)^0}^{((s_G)_1)^1}      \cdots  [F_V((L^\prime_D)_{0})]_{((s_G)_{p-1})^0}^{s_G^1}          
\\     =  \sum_{\text{compatible }(s_D)_i}  [F_V((L^\prime_D)_{0})]_{((s_D)_{0})^1}^{s_D^0}      \cdots [F_V((L^\prime_D)_{p-2})]_{((s_D)_{p-2})^1}^{((s_D)_{p-2})^0}       [F_V((L^\prime_D)_{p-1})]_{s_D^1}^{((s_D)_{p-1})^0}     
=  [F_V(L^\prime_D)]_{s_D^1}^{s_D^0}.     
\end{gather*}
     Lastly, by isotopy invariance,  
\begin{equation*}
     (\mathrm{Tr}^\omega_\mathfrak{B})_G(K, s)
=  [  F_V(L_G)  ]_{s_G^1}^{s_G^0}     
=  [F_V(L^\prime_G)]_{s_G^1}^{s_G^0}    
=  [F_V(L^\prime_D)]_{s_D^1}^{s_D^0}     
=  [  F_V(L_D)  ]_{s_D^1}^{s_D^0}     
=   (\mathrm{Tr}^\omega_\mathfrak{B})_D(K, s).  \qedhere     
\end{equation*}
 \end{proof}
 
\begin{proof}[Proof of Proposition {\upshape\ref{prop:reshetikhin-turaev}}]
Let $\mathrm{Tr}^\omega_\mathfrak{B}(K, s)(\mathscr{P})$ be defined as in Definition \ref{def:alternative-definition-of-the-biangle-quantum-trace-map}, depending on the parametrization $\mathscr{P}$ of the thickened biangle $\mathfrak{B}\times(0,1)$.  We first argue that this is independent of the choice of $\mathscr{P}$.  Indeed, if $\mathscr{P}^\prime$ is the other parametrization, then 
\begin{equation*}
\mathrm{Tr}^\omega_\mathfrak{B}(K, s)(\mathscr{P}^\prime)
=  (\mathrm{Tr}^\omega_\mathfrak{B})_{G}(K, s)(\mathscr{P}^\prime)
=  (\mathrm{Tr}^\omega_\mathfrak{B})_{D}(K, s)(\mathscr{P})
=  (\mathrm{Tr}^\omega_\mathfrak{B})_{G}(K, s)(\mathscr{P})
=  \mathrm{Tr}^\omega_\mathfrak{B}(K, s)(\mathscr{P}),     
\end{equation*}
where the second equation is immediate from the definition, and the third equation is by Lemma \ref{lem:main-lemma}.  

It is then straightforward to check that $\mathrm{Tr}^\omega_\mathfrak{B}(K, s)$, so defined via the Reshetikhin--Turaev functor $F_V$, agrees with the definition given in \S \ref{ssec:biangles-and-the-reshetikhin-turaev-invariant} (this is, more or less, implicit in the proof of Lemma \ref{lem:main-lemma}).  In particular, since  $F_V$ is  isotopy invariant, so is $\mathrm{Tr}^\omega_\mathfrak{B}(K, s)$, which is what remained to be proved.  
\end{proof}

\section{Computer calculations}
\label{sec:the-appendix}

In the Mathematica code appearing at the end of this article:  section 2.1 verifies the claims for the local $\mathrm{SL}_4$ example of \S \ref{sssec:n=4-example}; section 2.2 verifies the claims for the local $\mathrm{SL}_3$ example of \S \ref{sssec:n=3-example}; section 3.2.1 verifies the claims for Moves (I), (I.b), (I.c), (I.d) of \S \ref{sssec:move-1}; section 3.2.2 verifies the claims for Moves (II), (II.b) of \S \ref{sssec:move-2}; section 3.2.3 verifies the claims for Move (III) of \S \ref{sssec:move-3}; and, section 3.2.4 verifies the claims for Move (IV) of \S \ref{sssec:move-4}.   

\bibliographystyle{alpha}
\bibliography{references.bib}
\addresseshere

\includepdf[pages=-]{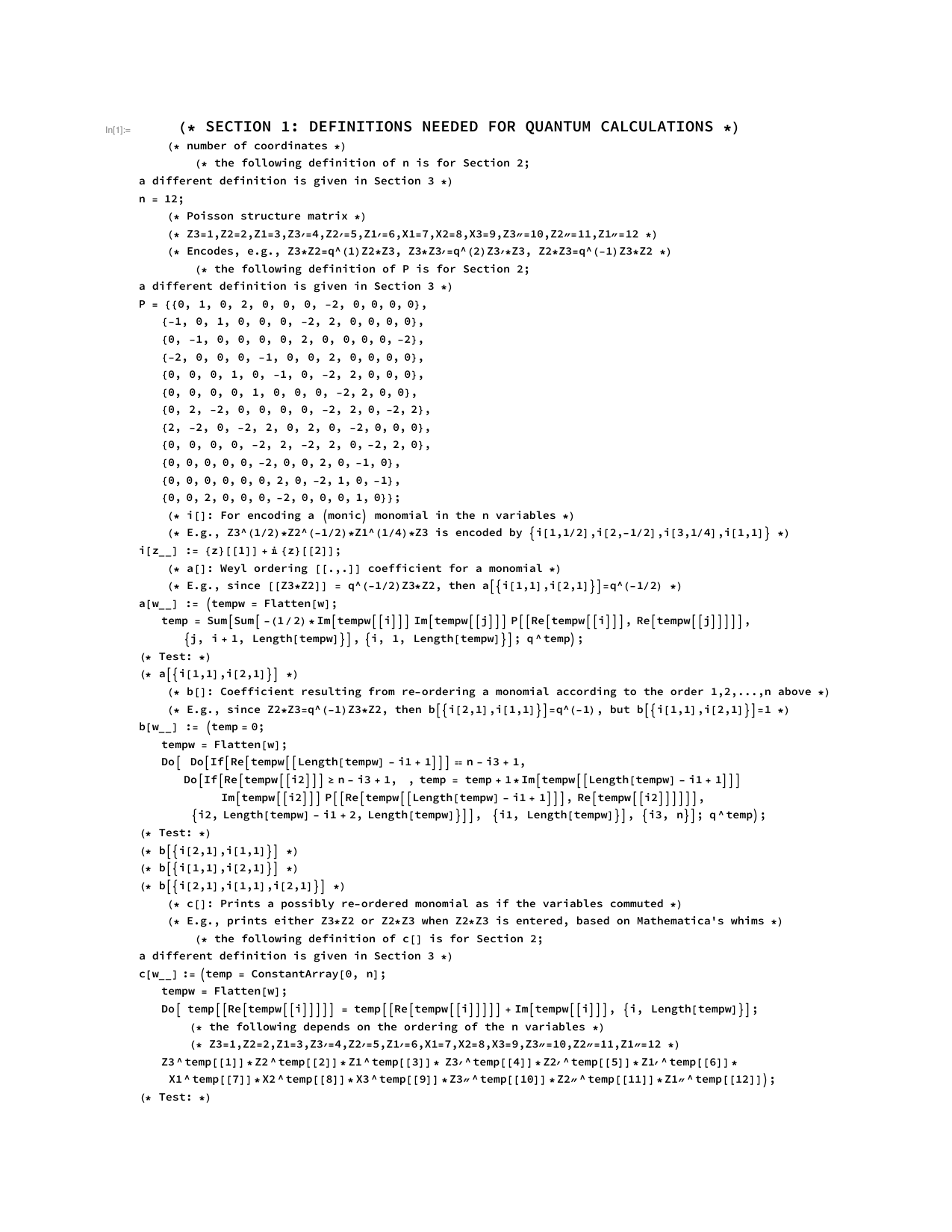}

\end{document}